\numberwithin{equation}{section}
\numberwithin{equation}{section}
\theoremstyle{plain}
\newtheorem{theorem}{Theorem}[section]
\newtheorem{lemma}[theorem]{Lemma}
\newtheorem{proposition}[theorem]{Proposition}
\theoremstyle{definition}
\newtheorem{definition}[theorem]{Definition}
\theoremstyle{remark}
\newtheorem{remark}[theorem]{Remark}
\newtheorem{example}[theorem]{Example}
\tikzstyle{unit} = [rectangle, draw, line width=1pt, minimum height=1.2cm]
\tikzstyle{comp} = [rectangle, draw=blue!70!black, fill=blue!50, line width=1pt, minimum width=3cm, inner sep=1mm, align=center]
\tikzstyle{corrm} = [ellipse, draw=red!70, fill=red!50, line width=1pt,  minimum width=0.5cm, inner sep=0pt]
\tikzstyle{pm} = [ellipse, draw=green!70!black, fill=green!50, line width=1pt, minimum width=0.5cm, inner sep=0pt]
\tikzstyle{fo} = [ellipse, draw=red, fill=red!90!blue, line width=1pt, minimum width=0.5cm, inner sep=0pt]
\tikzstyle{event} = [rounded rectangle, line width=1pt, draw, minimum width=2cm, minimum height=0.8cm]
\tikzstyle{part_ord} = [rectangle, line width=1pt, draw, minimum height=0.5cm]
\tikzstyle{brokenstate} = [rectangle, fill=blue!50, draw=red!60, font=\small, align=center, draw, line width=1pt, inner sep=1mm, minimum width=4cm, fill opacity=0.5, text opacity=1]
\tikzstyle{healthystate} = [rectangle, fill=blue!50, draw=green!70!black, font=\small, align=center, draw, line width=1pt, inner sep=1mm, minimum width=4cm, fill opacity=0.5, text opacity=1]
\tikzstyle{control} = [ellipse, fill=green!50, draw=green!70!black, font=\small, align=center, draw, line width=1pt, inner sep=2pt, fill opacity=0.5, text opacity=1]
\tikzstyle{spare} = [rectangle, draw=red!70, fill=red!50, font=\small, align=center, draw, line width=1pt, inner sep=1mm, fill opacity=0.5, text opacity=1]
\tikzstyle{system} = [ellipse, draw=blue!60!, fill=blue!50, line width=1pt, minimum width=3cm, minimum height=1.5cm, inner sep=1mm, align=center]
\tikzstyle{cycle_elem} = [rectangle, rounded corners=1mm, fill=blue!70!green!50, line width=1pt, minimum width=1.6cm, minimum height=0.65cm, inner sep=1mm, outer sep=0, align=center]
\def\mathscr{\EuScript}
\newcommand{\bbN}{\mathbb{N}}                               
\newcommand{\bbR}{\mathbb{R}}                               
\newcommand{\norm}[1]{\left\|#1\right\|}                    
\newcommand{\sqnorm}[1]{\left\|#1\right\|^{2}}              
\newcommand{\transpos}{^{\top}}                               
\newcommand{\dual}{^{\star}}                                
\newcommand{\opt}{^{\sharp}}                                
\newcommand{\ad}{^{\mathrm{ad}}}                            
\newcommand{\compl}{^{\mathrm{c}}}                          
\newcommand{\np}[1]{(#1)}                                   
\newcommand{\bp}[1]{\big(#1\big)}                           
\newcommand{\Bp}[1]{\Big(#1\Big)}                           
\newcommand{\na}[1]{\{#1\}}                                 
\newcommand{\ba}[1]{\big\{#1\big\}}                         
\newcommand{\espace}[1]{\mathbb{#1}}                        
\newcommand{\partie}[1]{#1}                                 
\newcommand{\findi}[1]{\mathbf{1}_{#1}}                     
\newcommand{\espacea}[1]{\mathbb{#1}}                       
\newcommand{\espacef}[1]{\mathcal{#1}}                      
\newcommand{\tribu}[1]{\mathscr{#1}}                        
\newcommand{\omeg}{\Omega}                                  
\newcommand{\trib}{\tribu{F}}                               
\newcommand{\prbt}{\mathbb{P}}                              
\newcommand{\espe}{\mathbb{E}}                              
\def\va@a{\boldsymbol{\va@arg^{\textstyle\text{\unboldmath$\scriptstyle\va@expo$}}_{\textstyle\text{\unboldmath$\scriptstyle\va@index$}}}}
\def\va#1{\def\va@expo{}\def\va@index{}\def\va@arg{\uppercase{#1}}%
  \@ifnextchar^{\va@h}{\@ifnextchar_\va@u\va@a}}
\def\va@h^#1{\def\va@expo{#1}\@ifnextchar_\va@hu\va@a}
\def\va@u_#1{\def\va@index{#1}\@ifnextchar^\va@uh\va@a}
\def\va@hu_#1{\def\va@index{#1}\va@a}
\def\va@uh^#1{\def\va@expo{#1}\va@a}
\newcommand{\Bigdelim}[1]{\Bp{#1}}                          
\newcommand{\vardelim}[1]{\left(#1\right)}                  
\newcommand{\Besp}[2][]{\espe_{#1}\Bigdelim{#2}}            
\newcommand{\proscal}[2]{\left\langle#1\:,#2\right\rangle}  
\newcommand{\nabs}[1]{|#1|}                                 
\newcommand{\lsc}{\text{l.s.c.}}                            
\newcommand{\as}{\text{a.s.}}                               
\newcommand{\Pas}{\text{$\prbt$-}\as}                       
\def\eqsepv{\; , \enspace}                                  
\def\eqfinv{\; ,}                                           
\def\eqfinp{\; .}                                           
\def\eqfinpv{\; ;}                                          
\newcommand{\Uad}{\partie{U}\ad}
\newcommand{\finpreuvesymb}{$\Box$}
\newcommand{\finremarksymb}{$\Diamond$}
\newcommand{\finexemplesymb}{$\triangle$}
\newcommand{\finpreuve}{\ \hspace*{\fill}\finpreuvesymb}
\newcommand{\finremark}{\ \hspace*{\fill}\finremarksymb}
\newcommand{\finexemple}{\ \hspace*{\fill}\finexemplesymb}
\newcommand{\skipfinpreuve}{\renewcommand{\finpreuve}{}}
\newcommand{\skipfinremark}{\renewcommand{\finremark}{}}
\def\endproof{\finpreuve\@endtheorem}
\def\endremark{\finremark\@endtheorem}
\def\endexample{\finexemple\@endtheorem}
\newcommand{\dis}[2]{\mathrm{dist}\np{#1,#2}}
\newcommand{\indic}[2]{\findi{#1}(#2)}
\newcommand{\indicrel}[2]{\findi{#1}\rel(#2)}
\newcommand{\indicrelder}[2]{\findi{#1}^{'\alpha}(#2)}
\newcommand{\expec}[1]{\espe(#1)}
\newcommand{\bbRp}{\mathbb{R}_{+}}
\newcommand{\bbRpe}{\mathbb{R}_{+}^{*}}
\newcommand{\reg}[2]{\va{E}_{#1,#2}}
\newcommand{\age}[2]{\va{A}_{#1,#2}}
\newcommand{\spr}[2]{\va{P}_{#1,#2}}
\newcommand{\regall}{\va{E}}
\newcommand{\ageall}{\va{A}}
\newcommand{\sprnofail}{\delta}
\newcommand{\delay}{D}
\newcommand{\stateallcomp}{\va{X}}
\newcommand{\stateallcompbar}{\bar{\stateallcomp}}
\newcommand{\stateallcompopt}{\stateallcomp\opt}
\newcommand{\statecomponent}[1]{\stateallcomp_{#1}}
\newcommand{\statecompopt}[1]{\stateallcompopt_{#1}}
\newcommand{\statecompbar}[1]{\stateallcompbar_{#1}}
\newcommand{\statecomptime}[2]{\stateallcomp_{#1,#2}}
\newcommand{\statecomptimeopt}[2]{\stateallcompopt_{#1,#2}}
\newcommand{\statecomptimebar}[2]{\stateallcompbar_{#1,#2}}
\newcommand{\stoalltime}{\va{S}}
\newcommand{\stoalltimebar}{\bar{\stoalltime}}
\newcommand{\stoalltimeopt}{\stoalltime\opt}
\newcommand{\sto}[1]{\stoalltime_{#1}}
\newcommand{\stoopt}[1]{\stoalltimeopt_{#1}}
\newcommand{\stobar}[1]{\bar{\stoalltime}_{#1}}
\newcommand{\noiseallcomp}{\va{W}}
\newcommand{\noise}[2]{\noiseallcomp_{#1,#2}}
\newcommand{\noisecomp}[1]{\noiseallcomp_{#1}}
\newcommand{\ctrlallcomp}{u}
\newcommand{\ctrlallcompbar}{\bar{u}}
\newcommand{\ctrlallcompopt}{u\opt}
\newcommand{\ctrlcomp}[1]{\ctrlallcomp_{#1}}
\newcommand{\ctrlcompbar}[1]{\ctrlallcompbar_{#1}}
\newcommand{\ctrlcompopt}[1]{\ctrlallcompopt_{#1}}
\newcommand{\ctrlcomptime}[2]{\ctrlallcomp_{#1,#2}}
\newcommand{\ctrlcomptimeopt}[2]{\ctrlallcompopt_{#1,#2}}
\newcommand{\multallcomp}{\va{\Lambda}}
\newcommand{\multallcompbar}{\bar{\multallcomp}}
\newcommand{\multallcompopt}{\multallcomp\opt}
\newcommand{\multcomp}[1]{\multallcomp_{#1}}
\newcommand{\multcompopt}[1]{\multallcompopt_{#1}}
\newcommand{\multcompbar}[1]{\multallcompbar_{#1}}
\newcommand{\multcomptime}[2]{\multallcomp_{#1,#2}}
\newcommand{\multcomptimeopt}[2]{\multallcompopt_{#1,#2}}
\newcommand{\multcomptimebar}[2]{\multallcompbar_{#1,#2}}
\newcommand{\components}{\mathbb{I}}
\newcommand{\timesteps}{\mathbb{T}}
\newcommand{\timestepsnoend}{\mathbb{T}_{-1}}
\newcommand{\spares}{\{0, \ldots, s\}}
\newcommand{\cpm}{C^{P}}
\newcommand{\ccm}{C^{C}}
\newcommand{\cfo}{C^{F}}
\newcommand{\costpm}{j^{P}}
\newcommand{\costcm}{j^{C}}
\newcommand{\costfo}{j^{F}}
\newcommand{\costmrel}{j\rel}
\newcommand{\costforel}{j^{F, \alpha}}
\newcommand{\costadd}{J_{\Sigma}}
\newcommand{\costaddi}{J_{\Sigma, i}}
\newcommand{\costdiff}{J_{\Delta}}
\newcommand{\dyn}{\Theta}
\newcommand{\dynrel}{\Theta\rel}
\newcommand{\dyncomp}[1]{\Theta_{#1}}
\newcommand{\dyncomprel}[1]{\Theta_{#1}\rel}
\newcommand{\dyncomptime}[2]{\Theta_{#1,#2}}
\newcommand{\auxdyn}{\Phi}
\newcommand{\auxdynrel}{\Phi\rel}
\newcommand{\auxdyncomp}[1]{\Phi_{#1}}
\newcommand{\auxdyncomprel}[1]{\Phi_{#1}\rel}
\newcommand{\auxdyncomptime}[2]{\Phi_{#1,#2}}
\newcommand{\dersto}[1]{\nabla_{\sto{#1}}}
\newcommand{\lag}{L}
\newcommand{\rel}{^{\alpha}}
\newcommand{\disc}[1]{\eta_{#1}}
\newcommand{\sucht}{\mathrm{s.t.}\ }
\newcommand{\primspace}{\espacef{X} \times \espacef{S} \times \espacea{U}}
\newcommand{\primdecomp}[1]{\espacef{X}_{#1} \times \espacea{U}_{#1}}
\newcommand{\optimspace}{(\stateallcomp,\stoalltime,\ctrlallcomp) \in \primspace}
\newcommand{\prevopt}{\stateallcompbar, \stoalltimebar, \ctrlallcompbar}
\algnewcommand\algorithmicparfor{\textbf{for all}}
\algnewcommand\algorithmicpardo{\textbf{do in parallel:}}
\algnewcommand\algorithmicendparfor{\textbf{end\ for}}
    \newcommand*{\algrule}[1][\algorithmicindent]{\makebox[#1][l]{\hspace*{.5em}\thealgruleextra\vrule height \thealgruleheight depth \thealgruledepth}}%
\newcommand*{\thealgruleextra}{}
\newcommand*{\thealgruleheight}{.75\baselineskip}
\newcommand*{\thealgruledepth}{.25\baselineskip}
\def\ALG@printindent{%
    \ifnum \theALG@nested>0
        \ifx\ALG@text\ALG@x@notext
        \else
            \unskip
            \addvspace{-1pt}
            \ALG@printindent@tempcnta=1
            \loop
                \algrule[\csname ALG@ind@\the\ALG@printindent@tempcnta\endcsname]%
                \advance \ALG@printindent@tempcnta 1
            \ifnum \ALG@printindent@tempcnta<\numexpr\theALG@nested+1\relax
            \repeat
        \fi
    \fi
    }%
\patchcmd{\ALG@doentity}{\noindent\hskip\ALG@tlm}{\ALG@printindent}{}{\errmessage{failed to patch}}
\newbox\statebox
\newcommand{\myState}[1]{%
    \setbox\statebox=\vbox{#1}%
    \edef\thealgruleheight{\dimexpr \the\ht\statebox+1pt\relax}%
    \edef\thealgruledepth{\dimexpr \the\dp\statebox+1pt\relax}%
    \ifdim\thealgruleheight<.75\baselineskip
        \def\thealgruleheight{\dimexpr .75\baselineskip+1pt\relax}%
    \fi
    \ifdim\thealgruledepth<.25\baselineskip
        \def\thealgruledepth{\dimexpr .25\baselineskip+1pt\relax}%
    \fi
    \State #1%
    \def\thealgruleheight{\dimexpr .75\baselineskip+1pt\relax}%
    \def\thealgruledepth{\dimexpr .25\baselineskip+1pt\relax}%
}
\newlength\myindent 
\title{A Decomposition Method by Interaction Prediction for the Optimization of Maintenance Scheduling
}
\author{T. Bittar\thanks{CERMICS, \'{E}cole des Ponts ParisTech, 6 et 8 avenue Blaise Pascal, 77455 Marne la Vallée Cedex 2, France}  \textsuperscript{,$\ddagger$}
\and P. Carpentier\thanks{Unité de Mathématiques Appliquées, ENSTA Paris, 828 Boulevard des Maréchaux, 91762 Palaiseau Cedex, France
  }
\and J-Ph. Chancelier\footnotemark[1]
\and J. Lonchampt\thanks{EDF R\&D PRISME, 6 quai Watier, 78400 Chatou, France} }
\date{}
\begin{document}

\maketitle

\begin{abstract}
  Optimizing maintenance scheduling is a major issue to improve the performance
  of hydropower plants. We study a system of several physical components
    of the same family: either a set of turbines, a set of transformers or a set
    of generators. The components share a common stock of spare parts and
  experience random failures that occur according to known failure
  distributions. We seek a deterministic preventive maintenance strategy that
  minimizes an expected cost depending on maintenance and forced outages of the
  system. The Auxiliary Problem Principle is used to decompose the original
  large-scale optimization problem into a sequence of independent subproblems of
  smaller dimension while ensuring their coordination. Each subproblem consists
  in optimizing the maintenance on a single
  component. Decomposition-coordination techniques are based on
    variational techniques but the maintenance optimization problem is a
    mixed-integer problem. Therefore, we relax the dynamics and the cost
    functions of the system. The resulting algorithm iteratively solves the
    subproblems on the relaxed system with a blackbox method and coordinates the
    components. Relaxation parameters have an important influence on the
  optimization and must be appropriately chosen. An admissible maintenance strategy is then derived from the resolution of the relaxed problem. We apply the decomposition
  algorithm on a system with 80 components. It outperforms the reference
  blackbox method applied directly on the original problem.
\end{abstract}


\section{Introduction}

In industry, maintenance aims at improving the availability of physical assets and therefore impacts the overall performance of a system. There exists two main kinds of maintenance: corrective and preventive. Corrective maintenance (CM) is performed in reaction to a breakdown. Preventive maintenance (PM) is a planned operation that consists in repairing or replacing a component before a failure. Maintenance policies have an important economic impact and are therefore studied in various areas such as the electricity sector~\cite{froger_maintenance_2016}, the manufacturing industry~\cite{ding_maintenance_2015} or civil engineering~\cite{sanchez-silva_maintenance_2016}. In the electricity sector, maintenance optimization plays a major role in ensuring a reliable and competitive electricity production. 

In this work, we consider components of hydroelectric power plants that can be either turbines, transformers or generators. The system of interest gathers only components of the same family sharing a common stock of spare parts. This means that we consider either a system with turbines, a system with transformers or a system with generators. The time horizon is $40$ years. Over time, components experience random failures that occur according to known failure distributions. Thus, the dynamics of the system is stochastic. A preventive strategy consists in choosing the dates of replacement for each component of the system. The goal is to find a preventive strategy that minimizes an expected cost depending on maintenance and on the occurrences of forced outages of the system. The numerical experiments should involve systems constituted of up to $80$ components in order to model the most demanding industrial case. This leads to optimization problems in high dimension that are numerically challenging.

The framework of this paper is close to~\cite{patriksson_stochastic_2015} where a stochastic opportunistic replacement problem with a maintenance decision at each time step is studied. 
Here, the focus is to take into account the couplings that are induced by a common stock of spare parts and the cost due to the forced outages of the system. Some features make our problem singular:
\begin{enumerate}
    \item We consider cutting-edge components for which there is a non-negligible delay between the order of a spare part and its arrival in the stock (several months, or even years).
    \item Operational constraints impose to only look for deterministic maintenance strategies. This means that the dates of PM are chosen at the beginning of the time horizon with only a statistical knowledge of the future dates of failure: this is referred as an open-loop strategy. This differs from condition-based maintenance~\cite{olde_keizer_condition-based_2017,rahmati_novel_2018} where maintenance decisions are taken given the degradation state of the components. Indeed, as the decisions depend on online observations, a condition-based maintenance strategy is stochastic.
    \item Many studies consider periodic~\cite{roux_development_2008,sarker_optimization_2000} or age-based~\cite{chen_optimizing_2006,rezg_joint_2004} maintenance policies. Such strategies are only defined with one decision variable per component: either the periodicity of maintenance or the age at which a component is replaced. In this paper, more general strategies are considered as we can decide whether or not to perform a PM at each time step for each component. Suppose that there are $T$ time steps and $n$ components, then our maintenance strategy is defined by $nT$ decision variables instead of $n$ variables for time-based or age-based strategies. The effort is justified as we consider a system on a long-term horizon where the costs incurred by forced outages are of the order of millions of euros. Then, even a minor improvement in the maintenance strategy generates important savings.
\end{enumerate}

Reviews on optimal maintenance scheduling~\cite{alrabghi_state_2015,cho_survey_1991,nicolai_optimal_2008} give a summary of the optimization techniques used for maintenance problems in the literature. They can be split in two main categories: mathematical programming and heuristic methods~\cite{froger_maintenance_2016}. Heuristic methods can easily deal with non-linear objectives and contraints that arise when modeling complex industrial systems. They include genetic algorithms~\cite{almakhlafi_benchmarks_2012}, particle swarm optimization~\cite{suresh_hybrid_2013} and simulated annealing~\cite{fattahi_new_2014}. However, these methods are known to be subject of the curse of dimensionality and cannot be used for our large-scale optimization problem. For high-dimensional problems, a frontal resolution is impracticable, and resorting to decomposition methods is relevant~\cite{froger_maintenance_2016}. In~\cite{grigoriev_modeling_2006}, mixed integer programming is used to model a periodic maintenance optimization problem. A linear relaxation of the problem is then solved using a column generation technique. Column generation consists in iteratively solving a constrained master problem on a subset of variables and an unconstrained subproblem on the whole set of variables. The subproblem indicates which variables are to be considered in the master problem. In~\cite{lusby_solution_2013}, a shutdown planning for the refueling of nuclear power plants is designed with a Benders decomposition coupled with various heuristics. Benders decomposition relies on a fixed splitting of the variables and consists in iteratively solving a master problem on the first subset of variables with initially few constraints and a subproblem on the remaining variables while the first variables are fixed to the value given by the master. The subproblems are easy to solve and generate additional constraints (Benders cuts) for the master problem that is solved again. Column generation and Benders decomposition are efficient when the problem exhibits some particular structure, but which does not appear in our study.




This is why we investigate decomposition-coordination methods~\cite{carpentier_decomposition-coordination_2017}. Originated from the work of~\cite{arrow_decentralization_1960,lasdon_multi-level_1965,mesarovic_theory_1970,takahara_multilevel_1964}, these decomposition schemes are based on variational techniques and 
consist in the iterative resolution of auxiliary problems whose solutions converge to the solution of the original problem. The auxiliary problems are designed so that they are decomposable into independent subproblems of smaller size.
Different types of decomposition-coordination schemes can be implemented, by prices, by quantities or by prediction. They have been unified within the Auxiliary Problem Principle~\cite{cohen_optimization_1978}. The types of decomposition differ in the way the auxiliary problems are designed and interpreted. In price decomposition, we aim at finding the saddle-point of the Lagrangian of the original problem. At each iteration, the auxiliary problem is the inner minimization of the Lagrangian, followed by an update of the multiplier using a projected gradient step. Price decomposition is the most commonly used decomposition-coordination method, and is applied in~\cite{nowak_stochastic_2000} for a power scheduling problem, in~\cite{diabat_lagrangian_2013} for supply chain management or in~\cite{kaihara_proactive_2010} for maintenance scheduling. The decomposition by prediction introduced by Mesarovic~\cite{mesarovic_theory_1970} is often used when explicit constraints on the system dynamics are considered, which corresponds to the framework of this paper. The resolution uses a fixed-point algorithm that is more efficient than the gradient-based algorithm used in price decomposition. This scheme has been used for the optimal control of robot manipulators~\cite{sadati_optimal_2006} or to solve an optimal power flow problem~\cite{nogales_decomposition_2003}. However, to our knowledge, it has never been applied to maintenance scheduling problems, which makes the originality of this work.

The industrial maintenance scheduling problem is modelled as a non-linear mixed integer program. We use a continuous relaxation of the system and decompose the large-scale optimization problem into several subproblems that consist in optimizing the maintenance on a single component. The decomposition algorithm iteratively solves the subproblems with the blackbox algorithm MADS~\cite{audet_mesh_2006} and coordinates the components in order to find an efficient maintenance strategy at the scale of the whole system. Then, we use the solution of the relaxed problem to design an admissible maintenance strategy for the original mixed integer problem. The aim of the paper is to show that the decomposition method can efficiently tackle maintenance problems with a large number of components, therefore it is applied on a system with $80$ components. The relaxation parameters have an important influence on the optimization and must be appropriately chosen. On the test case, the decomposition method outperforms the blackbox algorithm applied directly on the original problem.

The paper is organized as follows: in Section~\ref{sec:description}, we describe the industrial system and formulate the maintenance optimization problem. The Auxiliary Problem Principle and the decomposition by prediction are introduced in~Section~\ref{sec:app}. The application of the decomposition method to the maintenance optimization problem and the continuous relaxation of the system are presented in~Section~\ref{sec:app_syst}. Section~\ref{sec:num_res} contains numerical results showing the efficiency of the method in high dimension. Finally, in~Section~\ref{sec:ccl}, we conclude and give directions for future research.



\section{System modeling and maintenance optimization problem}
\label{sec:description}

In this section, we describe the model of the studied industrial system and formulate the maintenance optimization problem. 
For any vector $v = (v_{1}, \ldots, v_{n})$, we denote the first $k$ components of $v$ by:
\begin{equation}
    v_{1:k} = (v_{1}, \ldots, v_{k}) \eqfinp
\end{equation}
The notation $\proscal{\cdot}{\cdot}$ represents the inner product in a Hilbert space and $\norm{\cdot}$ is the induced norm. Random variables are defined on a given probability space $(\omeg, \trib, \prbt)$ and are denoted with capital bold letters. For a set $\mathcal{A} \subset \bbR^{p}$, we denote by $\findi{\mathcal{A}}$ the indicator function of the set $\mathcal{A}$, \emph{i.e.} for $x \in \bbR^{p}$:
\begin{equation}
    \indic{\mathcal{A}}{x} = 
    \left\{
    \begin{aligned}
        1 &\text{ if } x \in \mathcal{A} \eqfinv\\
        0 &\text{ if } x \notin \mathcal{A} \eqfinp\\
    \end{aligned}
    \right.
\end{equation}

\subsection{Description of the system}
\label{subsec:gen_syst}

This work is motivated by an industrial maintenance scheduling problem for components from hydroelectric power plants. In this paper, a \emph{system} gathers several components of the same family: we consider either a set of turbines, a set of transformers or a set of generators. The system also comprises a common stock of spare parts. All components belong to the same production unit and they must all be in their nominal state for the unit to produce electricity (series system). The components can either be healthy or broken, no degradation state is considered. Over time, failures occur according to known failure distributions. To replace a failed component, we perform a CM using a part from the common stock. As soon as a failure occurs, we order a new spare part. We consider complex cutting-edge components that are critical for the electricity production and for which the manufacturing and the qualification processes require several months or even years. Therefore the time of replenishment of the stock is important. If several components fail at close moments in time, it may happen that no spare part is available. In this case, some components cannot be replaced and no electricity is produced: the system is in \emph{forced outage}. We can also perform PMs to replace healthy components in order to prevent a future failure. For a PM, we order a spare part so that it arrives just on time for the maintenance operation, therefore a PM does not use a part from the stock. A PM must be planned several years in advance as it requires the planning of the budget, the anticipation of the manufacturing of the part and involves highly specialized teams that operate all around the country and must be booked several years in advance. Hence, the dates of PMs are chosen at the initial time of the horizon based only on the statistical knowledge of the future dates of failures. We assume that PMs and CMs are performed in negligible time and that they are perfect operations meaning that a component is \emph{as good as new} after a maintenance. As PMs are planned operations, they are cheaper than unpredictable CMs which require the unplanned requisition of highly specialized teams and induce a modification of their working schedules. PM and CM costs take into account all the costs generated by the maintenance operation (labor cost, cost of the new part, logistic cost, set up cost). Forced outage costs are much larger than that of a PM or a CM. No holding cost is considered for the stock of spare parts.

We denote by $n \in \bbN^{\star}$ the number of physical components in the system. The horizon of study is denoted by $T \in \bbN^{\star}$. In the sequel, $i \in \components = \{1, \ldots, n\}$ denotes a component index, $t \in \timesteps = \{0, \ldots, T\}$ denotes a time step and we use the notation $\timestepsnoend$ for the set $\na{0, \ldots, T-1}$. A sketch of the system with $n=2$ components is represented in Fig.~\ref{fig:toy_system}. The variables $\sto{t}$, $\statecomptime{1}{t}$ and $\statecomptime{2}{t}$ describe respectively the state of the stock and of the components. They are defined in~\S\ref{subsubsec:sto_comp_char}. The variable $\ctrlcomptime{i}{t}$ represents the PM decision and is defined precisely in~\S\ref{subsubsec:model_pm}. Finally $\costpm_{i,t}$, $\costcm_{i,t}$ and $\costfo_{i,t}$ represent the cost generated by a PM, a CM and a forced outage respectively, they are defined in~\S\ref{subsec:cost}. An example of the dynamics of a system with two components and a stock with initially one spare part is given in Fig.~\ref{fig:ex_dyn}.

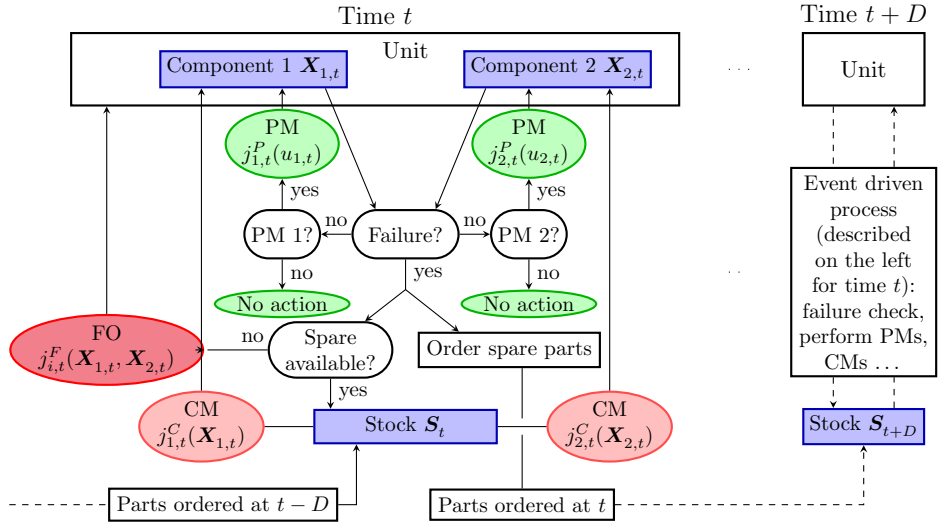
\begin{figure}[htbp]
    \centering
    \scalebox{0.8}{
    \begin{tikzpicture}
        \begin{scope}[every node/.style={font=\small, align=center, fill opacity=0.5, text opacity=1, draw opacity=1}]
            \node (unit) [unit, minimum width=10cm, label=north:{\large Time $t$}] {};
            \node (comp1) [comp, xshift=-2cm] at (unit) {Component 1 $\statecomptime{1}{t}$};
            \node (comp2) [comp, xshift=3cm] at (unit) {Component 2 $\statecomptime{2}{t}$};
            \node (unit_label) [rectangle, font=\normalsize] at ($(comp1.north)!0.5!(comp2.north)$) {Unit};
            \node (fail) [event, anchor=north, yshift=-2cm] at ($(comp1.south)!0.5!(comp2.south)$) {Failure?};
            \node (avail) [event, anchor=east, yshift=-1.5cm, xshift=-0.2cm] at (fail.south) {Spare\\ available?};
            \node (part_ord) [part_ord, anchor=west, yshift=-1.5cm, xshift=0.2cm] at (fail.south) {Order spare parts};
            \node (pm1_check) [event, anchor=east, xshift=-0.5cm, minimum width=1.5cm] at (fail.west) {PM 1?};
            \node (pm2_check) [event, anchor=west, xshift=0.5cm, minimum width=1.5cm] at (fail.east) {PM 2?};
            \node (stock) [comp, anchor=north] at ([yshift=-0.5cm]avail.south -| fail.south) {Stock $\sto{t}$};
            \node (no_act_1) [pm, anchor=north, yshift=-0.5cm, inner sep=1pt] at (pm1_check.south) {No action};
            \node (no_act_2) [pm, anchor=north, yshift=-0.5cm, inner sep=1pt] at (pm2_check.south) {No action};

            \node (new_unit) [unit, xshift=2cm, anchor=west, font=\normalsize, minimum width=2cm, label=north:{\large Time $t+D$}] at (unit.east) {Unit};
            \node (new_stock) [comp, minimum width=0pt] at (new_unit |- stock) {Stock $\sto{t+D}$};
            \node (box_unit) [unit, minimum width=2cm, minimum height=3cm, yshift=-1cm, anchor=north] at (new_unit.south){Event driven \\ process \\ (described \\ on the left \\ for time $t$):\\ failure check, \\ perform PMs, \\ CMs \dots};

            \node (pm1) [pm, anchor=south, yshift=0.5cm] at (pm1_check.north) {PM \\$\costpm_{1,t}(\ctrlcomptime{1}{t})$};
            \node (pm2) [pm, anchor=south, yshift=0.5cm] at (pm2_check.north) {PM \\$\costpm_{2,t}(\ctrlcomptime{2}{t})$};
            \node (cm1) [corrm, anchor=east, xshift=-0.8cm] at (stock.west) {CM \\$\costcm_{1,t}(\statecomptime{1}{t})$};
            \node (cm2) [corrm, anchor=west, xshift=0.8cm] at (stock.east) {CM \\$\costcm_{2,t}(\statecomptime{2}{t})$};
            \node (fo) [fo, anchor=west, xshift=-1cm] at (avail.west -| unit.west) {FO \\ $\costfo_{i,t}(\statecomptime{1}{t}, \statecomptime{2}{t})$};

            \node (prev_part_order) [part_ord, yshift=-1cm, xshift=-3cm] at (stock.south) {Parts ordered at $t-D$};
            \node (tmp1) at ($(stock.east)!0.5!(cm2.west)$){};
            \node (new_part_order) [part_ord, yshift=-1cm] at (tmp1 |- stock.south) {Parts ordered at $t$};
        \end{scope}
        \begin{scope}[line width=0.5pt, >=stealth, every node/.style={font=\small}]
            \draw[->] (comp1.345) -- (fail.140);
            \draw[->] (comp2.195) -- (fail.40);
            \draw[->] (fail.south) -- node[anchor=west]{yes} ([yshift=-0.5cm]fail.south) -- (avail.40);
            \draw[->] ([yshift=-0.5cm]fail.south) -- (part_ord.160);
            \draw[->] (avail.south) -- node[anchor=west]{yes} (stock.north -| avail.south); 
            \draw[->] (fail.west) -- node[anchor=south]{no} (pm1_check.east);
            \draw[->] (fail.east) -- node[anchor=south]{no} (pm2_check.west);
            \draw[->] (pm1_check.north) -- node[anchor=west]{yes} (pm1.south);
            \draw[->] (pm2_check.north) -- node[anchor=west]{yes} (pm2.south);
            \draw[->] (pm1_check.south) -- node[anchor=west]{no} (no_act_1.north);
            \draw[->] (pm2_check.south) -- node[anchor=west]{no} (no_act_2.north);

            \draw[->] let \p1 = (comp1.south), \p2 = (pm1.north) in (pm1.north) -- (\x2, \y1);
            \draw[->] let \p3 = (comp2.south), \p4 = (pm2.north) in (pm2.north) -- (\x4, \y3);
            \draw[->] let \p5 = (comp1.south), \p6 = (cm1.north) in (cm1.north) -- (\x6, \y5);
            \draw[->] let \p7 = (comp2.south), \p8 = (cm2.north) in (cm2.north) -- (\x8, \y7);
            \draw[->] let \p9 = (unit.south), \p0 = (fo.north) in (fo.north) -- (\x0, \y9);
            \draw[->] ($(avail.west -| cm1.north) + (-0.1,0)$) -- (fo.east);
            \draw[->] (prev_part_order.east) -| (stock.200);
            \draw[->, dashed] (new_part_order.east) -| (new_stock.south);

            \draw[->, dashed]($(box_unit.south) + (-0.5,0)$) -- ($(new_stock.north) + (-0.5,0)$);
            \draw[->, dashed] ($(box_unit.north) + (0.5,0)$) -- ($(new_unit.south) + (0.5,0)$);
        \end{scope}
        \begin{scope}[line width=0.5pt, every node/.style={font=\small}]
            \draw[-] (stock.west) -- (cm1.east);
            \draw[-] (stock.east) -- (cm2.west);
            \draw[-] (avail.west) -- node[anchor=south, near start]{no} ($(avail.west -| cm1.north) + (0.1,0)$);
            \draw[dashed] ($(unit.west |- prev_part_order.west) + (-1, 0)$) -- (prev_part_order) ;
            \draw[-] (tmp1.south) -- (new_part_order.north -| tmp1);
            \draw[-] (part_ord.south -| tmp1) -- (tmp1.north);

            \draw[dashed] ($(new_unit.south) + (-0.5,0)$) -- ($(box_unit.north) + (-0.5,0)$);
            \draw[dashed] ($(new_stock.north) + (0.5,0)$) -- ($(box_unit.south) + (0.5,0)$);

            \draw[loosely dotted] ($(unit.east |- box_unit.west) + (0.8,0)$) -- ([xshift=-0.8cm]box_unit.west);
            \draw[loosely dotted] ([xshift=0.8cm]unit.east) -- ([xshift=-0.8cm]new_unit.west);
        \end{scope}
    \end{tikzpicture}
    }
    \caption{System of two components from a single unit sharing the same stock of spare parts. The right part of the figure is a copy of the system at time $t+D$. It emphasizes the coupling between time $t$ and $t+D$ through the supply in spare parts.}
    \label{fig:toy_system}
\end{figure}

\begin{figure}[htbp]
    \centering
    \includegraphics[width=0.6\textwidth]{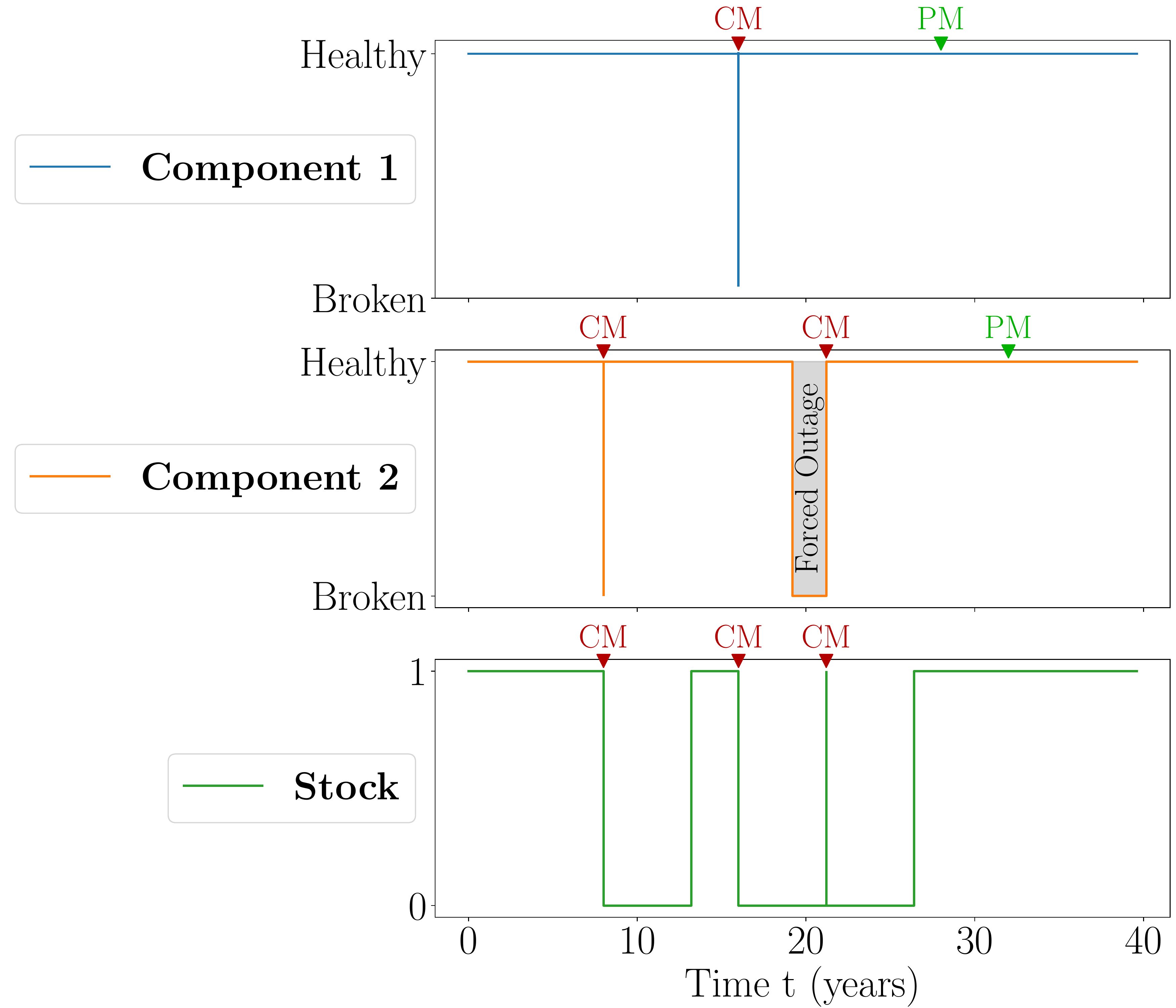}
    \caption{Illustration of the dynamics of a system with two components. When a failure occurs, if a spare part is available, a CM is performed immediately. The stock is then empty until the arrival of a new part (ordered at the time of the previous failure). If a failure occurs and the stock is empty, there is a forced outage until the arrival of a spare part. Some PMs can be performed to avoid failures, they do not use parts from the stock as they can be scheduled in advance.}
    \label{fig:ex_dyn}
\end{figure}

The model intends to represent the following real situations:
\begin{enumerate}
\item In a single production unit, we consider a system with a unique family of components from the plant (either turbines, transformers or generators). In this case, any failure of a component
  induces a failure of the system (series system). In a real situation, the
  number of components in this case can be up to $10$.
\item We can also consider a system with components across several production
  units, still with one common stock. This time, a failure of a given component
  will only induce a failure of its own production unit and not of the whole
  system. Our methodology remains applicable to this system with the only
  difference that there is one forced outage cost for each unit. In this case
  the number of components can be up to $80$.
\item Finally, the developed methodology is also easily applicable to a system
  with several families of components and one separate stock of spares for each
  family. The components can be either in a single production unit or
  distributed across several units.
\end{enumerate}
The primary goal of our work is to show that we are able to tackle maintenance scheduling problems with a large number of components (up to $80$). This is why, we design a model that is a proxy for the aforementioned cases. In Section~\ref{sec:num_res}, we explain how our model can be used to represent each of the three real situations described above.

\subsubsection{Characterization of the stock and the components}
\label{subsubsec:sto_comp_char}

The stock over time is characterized by the sequence of random variables
\begin{equation}
  \stoalltime = (\sto{0}, \ldots, \sto{T}) \in \espacef{S}\eqfinv
  \label{eq:adm_s}
\end{equation}
where $\sto{t}: \omeg \rightarrow \spares$ is the random variable representing the number of available spare parts at time $t$ and $\mathcal{S}$ is the set of all random variables taking values in $\spares^{T+1}$. The parameter $s \in \bbN$ is the maximum number of spare parts. The value of the initial stock is set to $\sto{0} = s$. The replenishment delay for the parts, that is, the time from order to delivery of a part, is known and denoted by $\delay \in \bbN$.

At time $t$, component $i$ is characterized by random variables representing:
\begin{itemize}
    \item its regime
        \begin{equation}
            \reg{i}{t} = \left\{
                \begin{aligned}
                    0 & \text{ if the component is broken},\\
                    1 & \text{ if the component is healthy}.
                \end{aligned}
                \right.
        \end{equation}
    A component has only two regimes: in the healthy regime, it runs in its nominal operating point. In the broken regime, it stops working completely. Initially all components are healthy \emph{i.e.} $\va{E}_{i,0} = 1$ for all $i \in \components$.

    \item its age (if healthy) or the time for which it has failed (if broken) denoted by 
    the real-valued random variable 
    $\age{i}{t}$.
    Initially the components are new \emph{i.e.} $\va{A}_{i,0} = 0$ for all $i \in \components$.
    \item the time elapsed since its last $\delay$ failures
        \begin{equation}
            \spr{i}{t} = (\spr{i}{t}^{1}, \ldots, \spr{i}{t}^{\delay}) \eqfinv
        \end{equation}
    where $\delay$ is the number of time steps for the supply of spare parts. For $d \in \{1, \ldots, \delay\}, \ \spr{i}{t}^{d}$ is the number of time steps elapsed since the $d$-th undiscarded failure of component $i$. $\spr{i}{t}^{d}$ takes a default value $\sprnofail$ if the component has failed fewer than $d$ times. Hence, $\spr{i}{t}^{d}$ takes values in $\na{\sprnofail} \cup \bbRp$ and $\spr{i}{0} = (\sprnofail, \ldots, \sprnofail)$. The random vector $\spr{i}{t}$ is useful to compute the dates of replenishment of the stock. 
    It is enough to store at most the dates of the last $D$ failures to describe the supply of the stock. More details are given in~\S\ref{subsubsec:dyn_comp}. 
    
\end{itemize}
The characteristics of component $i$ at time $t$ are gathered in:
\begin{equation}
    \statecomptime{i}{t} = (\reg{i}{t}, \age{i}{t}, \spr{i}{t}) \in \mathcal{X}_{i,t} \eqfinv
\end{equation}
where $\mathcal{X}_{i,t}$ is the set of all random variables defined on $\omeg$ taking values in $\{0,1\} \times \bbRp \times \left(\na{\sprnofail} \cup \bbRp\right)^{\delay}$. The state of the system is then described at $t$ by $(\statecomptime{1}{t}, \ldots, \statecomptime{n}{t}, \sto{t})$. Finally, to describe the components over the whole study period we introduce:
\begin{equation}
    \stateallcomp = (\statecomponent{1}, \ldots, \statecomponent{n}) = ((\statecomptime{1}{0}, \ldots, \statecomptime{1}{T}), \ldots, (\statecomptime{n}{0}, \ldots, \statecomptime{n}{T})) \in \espacef{X} \eqfinv
\end{equation}
where $\espacef{X} = \prod_{i=1}^{n} \espacef{X}_{i}$ and $\espacef{X}_{i} = \prod_{t=0}^{T} \espacef{X}_{i,t}$, for all $i\in \components$. In order to emphasize that $\stateallcomp$ depends on all the components of the system, we sometimes use the notation $\statecomponent{1:n}$ instead of $\stateallcomp$.


\subsubsection{Preventive maintenance strategy}
\label{subsubsec:model_pm}

A PM consists in repairing a component although it is in the healthy regime. The dates of PM can be different for each component. They define the preventive maintenance strategy of the system. Operational constraints impose to look for deterministic strategies. This means that the dates of PM are chosen without any knowledge on the state of the system after the beginning of the time horizon and cannot be changed during the study. 
The maintenance strategy is defined by a vector
\begin{equation}
    \ctrlallcomp = (\ctrlcomp{1}, \ldots, \ctrlcomp{n}) = 
        ((\ctrlcomptime{1}{0}, \ldots, \ctrlcomptime{1}{T-1}), \ldots, (\ctrlcomptime{n}{0}, \ldots, \ctrlcomptime{n}{T-1}))\in \espacea{U} = [0,1]^{nT} \eqfinv
    \label{eq:adm_u}
\end{equation}
where $\ctrlcomptime{i}{t}$ characterizes the PM for component $i$ at time $t$. More precisely, we set a threshold $0 < \nu < 1$: a control $\ctrlcomptime{i}{t} \geq \nu$ corresponds to a rejuvenation of the component proportional to $\ctrlcomptime{i}{t}$ and a value $\ctrlcomptime{i}{t} < \nu$ corresponds to not performing a maintenance. We consider that the duration of the maintenance operation is negligible. Moreover, a PM does not use parts from the stock as it is planned in advance and the parts are ordered so that they arrive just on time for the maintenance. Note that the modeling of the PM strategy uses a continuous decision variable $u$, this choice is justified in~\S\ref{subsec:mads}. 

\subsubsection{Failures of the components}
\label{subsubsec:model_fail}

In our study, the distribution of the time to failure for component $i$ is a known Weibull distribution
with cumulative distribution function denoted by $F_{i}$. If component $i$ is healthy at time $t$ and has age $a \geq 0$ , its probability of failure at time $t + \Delta t$ is given by:
\begin{equation}
        p_{i}(a) = \frac{F_{i}(a + \Delta t) - F_{i}(a)}{1 - F_{i}(a)} \eqfinp
\end{equation}
We introduce the random sequence:
\begin{equation}
    \noiseallcomp = (\noisecomp{1}, \ldots, \noisecomp{n}) = ((\noise{1}{1}, \ldots, \noise{1}{T}), \ldots, (\noise{n}{1}, \ldots, \noise{n}{T})) \in \espacef{W} \eqfinv
\end{equation}
where $\espacef{W}$ is the set of all random variables defined on $\omeg$ taking values in $[0, 1]^{nT}$. The random process $\noiseallcomp$ is an exogenous noise that affects the dynamics of the regime $\regall$ and the age $\ageall$.
We assume that all $\noise{i}{t}$ are independent random variables and follow a uniform distribution on $[0,1]$. At time step $t$, component~$i$ has age $\age{i}{t}$. If $\noise{i}{t+1} < p_{i}(\age{i}{t})$, component $i$ fails at $t+1$, otherwise no failure occurs.


\subsection{Dynamics of the system}
\label{subsec:syst_dyn}

Now, we describe the dynamics of the system, that is, we explain how the variables characterizing the system evolve between two time steps.

\subsubsection{Dynamics of a component}
\label{subsubsec:dyn_comp}

Let $i \in \components$ and $t \in \timestepsnoend$. The dynamics of component $i$ between $t$ and $t+1$ is described as follows.
\begin{enumerate}
    \item If component $i$ is healthy \emph{i.e.} $\reg{i}{t} = 1$:
    \begin{enumerate}
        \item If $\ctrlcomptime{i}{t} \geq \nu$, then a PM is performed. After a PM, component $i$ stays healthy and is rejuvenated so that:
            \begin{equation}
                (\reg{i}{t+1}, \age{i}{t+1}) = (1, (1 - \ctrlcomptime{i}{t}) \age{i}{t} + 1) \eqfinp
            \end{equation}
        The PM is performed instantaneously at time $t$, so after the maintenance (still at time $t$) its age is $(1-\ctrlcomptime{i}{t})\age{i}{t}$. Therefore, the age is $(1 - \ctrlcomptime{i}{t}) \age{i}{t} + 1$ at time $t+1$. Note that $\ctrlcomptime{i}{t} = 1$ makes the component as good as new: in this case we have $\age{i}{t+1} = 1$.
        \item If $\ctrlcomptime{i}{t} < \nu$, then no PM is performed. Component $i$ fails with probability $p_{i}(\age{i}{t})$:
        \begin{align}
            (\reg{i}{t+1}, \age{i}{t+1}) = 
            \left\{
                \begin{aligned}
                    &(0, 0) && \text{ if } \noise{i}{t+1} < p_{i}(\age{i}{t}) \eqfinv\\
                    &(1, \age{i}{t} + 1) && \text{ otherwise} \eqfinp
                \end{aligned}
            \right.
        \end{align}
    \end{enumerate}
    \item If component $i$ is broken \emph{i.e.} $\reg{i}{t} = 0$:
    \begin{enumerate}
        \item If a spare is available in the stock, a CM is performed to replace the component. We assume that the CM is an identical replacement, which implies that the component becomes as good as new. We get:
        \begin{equation}
                (\reg{i}{t+1}, \age{i}{t+1}) = (1, 1) \eqfinp
        \end{equation}
        A CM is performed instantaneously at time $t$, so the age of the component is $0$ after the CM, therefore it has age $1$ at time $t+1$.
        \item If no spare part is available, the defective component stays in the broken regime:
        \begin{equation}
                (\reg{i}{t+1}, \age{i}{t+1}) = (0, \age{i}{t} + 1) \eqfinp
        \end{equation}
        As all components belong to the same power plant, when at least one component is broken, the unit is shut down until the arrival of a spare part and the execution of the CM. Such a situation is a \emph{forced outage}. During the shut down no electricity is produced. 
    \end{enumerate}
\end{enumerate}

We have to express formally that a spare part is available for the replacement of component $i$. At time $t$, suppose that the stock has $\sto{t} = r$ parts and that $m$ components are broken. If $r \geq m$, then all components can be replaced immediately. When $r < m$, we must choose which components to replace. Our modeling choice is to replace the broken components following the order of their index: if $i_{1} \leq \ldots \leq i_{r} \leq \ldots \leq i_{m}$ are the indices of the broken components, we replace only the components with index $i_{1}, \ldots, i_{r}$, the others stay in the broken regime and wait for new available parts. Using this choice, the availability of a spare part for component $i$ corresponds to the condition:
\begin{equation}
    \sto{t} \geq \sum_{j=1}^{i} \indic{\na{0}}{\reg{j}{t}} \eqfinp
    \label{eq:avail_comp}
\end{equation}
The right hand side of~\textup {(\ref{eq:avail_comp})} simply counts the number of broken components with index smaller or equal than $i$.


To completely describe the dynamics of a component, we have to specify the dynamics of the vector $\spr{i}{t}$. It has been introduced in~\S\ref{subsubsec:sto_comp_char} to store the dates of failures of the component and compute the dates for the replenishment of the stock.
\begin{itemize}
    \item If $\spr{i}{t} = (t_{1}, \ldots, t_{d}, \sprnofail, \ldots, \sprnofail)$ with $t_{1}, \ldots, t_{d} \geq 0$, meaning that component $i$ has undergone $d < \delay$ failures so far, then:
    \begin{subequations}
        \begin{empheq}[left={\spr{i}{t+1} =\empheqlbrace}]{align}
            &(t_{1}+1, \ldots, t_{d}+1, 0, \sprnofail, \ldots, \sprnofail) && \text{if failure at $t+1$}\eqfinv
            \label{eq:nb_fail_fail}\\
            &(t_{1}+1, \ldots, t_{d}+1, \sprnofail, \sprnofail, \ldots, \sprnofail) && \text{otherwise}\eqfinp
            \label{eq:nb_fail_nofail}
        \end{empheq}
        \label{eq:nb_fail}
    \end{subequations}
    \item If $\spr{i}{t} = (t_{1}, \ldots, t_{\delay})$ with $t_{1}, \ldots, t_{\delay} \geq 0$, meaning that component $i$ has undergone at least $\delay$ failures so far, then:
    \begin{subequations}
        \begin{empheq}[left={\spr{i}{t+1} =\empheqlbrace}]{align}
            &(t_{2}+1, \ldots, t_{\delay}+1, 0) && \text{if failure at $t+1$}\eqfinv
            \label{eq:nb_fail_leqk_fail}\\
            &(t_{1}+1, \ldots, t_{\delay}+1) && \text{otherwise}\eqfinp
            \label{eq:nb_fail_leqk_nofail}
        \end{empheq}
        \label{eq:nb_fail_leqk}
    \end{subequations}
    In~\textup {(\ref{eq:nb_fail_leqk_fail})}, note that $t_{1}$ is discarded. As $\spr{i}{t} = (t_{1}, \ldots, t_{\delay})$ and $t_{1} > \ldots > t_{\delay} \geq 0$, we get that $t_{1} \geq \delay-1$. At time step $t+1$, the part ordered from the failure at $t_{1}$ has arrived. Then, storing $t_{1}$ is not useful anymore. So if a failure occurs at $t+1$, we can discard $t_{1}$ to make room for the new date of failure. This proves that it is enough to have $\spr{i}{t}$ of size $\delay$ to compute the replenishment of the stock as stated in~\S\ref{subsubsec:sto_comp_char}. Note that the dates are not discarded if there is no failure (see~\eqref{eq:nb_fail_leqk_nofail}), so it is possible to have $t_{d} > \delay$ for some $d \in \{1, \ldots, \delay\}$. Such variables have no influence on the dynamics of the system.
\end{itemize}
\begin{figure}[htbp]
    \centering \scalebox{0.8}{
    \begin{tikzpicture}
        \node[healthystate](xt1){$(1, \age{i}{t})$ \\ Healthy, age $\age{i}{t}$};
        \node[brokenstate, yshift=-3.5cm, anchor=north](xt0) at (xt1.south) {$(0, \age{i}{t})$ \\ Broken for a time $\age{i}{t}$};
    
        \node[control, xshift=2cm, yshift=1.1cm, anchor=west](pm) at (xt1.east) {$\ctrlcomptime{i}{t} \geq \nu$ \\ PM};
        \node[control, xshift=2cm, yshift=-1.1cm, anchor=west](nopm) at (xt1.east) {$\ctrlcomptime{i}{t} < \nu$ \\No PM};
    
        \node[healthystate, xshift=3cm, anchor=west](repair) at (pm.east) {$(1, (1 - \ctrlcomptime{i}{t})\age{i}{t} + 1)$ \\ Rejuvenation};
        \node[healthystate, xshift=3cm, yshift=0.7cm, anchor=west](ageing) at (nopm.east) {$(1, \age{i}{t} + 1)$ \\ Ageing};
        \node[brokenstate, xshift=3cm, yshift=-0.7cm, anchor=west](fail) at (nopm.east) {$(0, 0)$ \\ Failure};
    
        \node[healthystate, yshift=0.75cm](replace) at (xt0 -| ageing) {$(1, 1)$ \\ Replacement};
        \node[brokenstate, yshift=-0.75cm](broken) at (xt0 -| ageing) {$(0, \age{i}{t} + 1)$ \\ Stays broken};
    
        \node[spare] (spare) at (xt0 -| nopm) {$\sto{t} \geq \sum_{j=1}^{i} \indic{\na{0}}{\reg{j}{t}}$ ? \\ Enough spare parts ?};
    
        \node[yshift=1cm, minimum width=4cm, anchor=south](desct) at (xt1 |- repair){$(\reg{i}{t}, \age{i}{t})$};
        \node[yshift=1cm, minimum width=4cm, anchor=south](desct+1) at (repair){$(\reg{i}{t+1}, \age{i}{t+1})$};
        \draw(desct.south west) -- (desct+1.south east);
    
        \begin{scope}[line width=0.5pt, >=stealth, ->, every node/.style={font=\small, inner sep = 1pt, fill=white}]
            \draw (xt1.east) -- (pm.west);
            \draw (xt1.east) -- (nopm.west);
            \draw (pm.east) -- (repair.west);
            \draw (nopm.east) -- node {$p_{i}(\age{i}{t})$}(fail.west);
            \draw (nopm.east) -- node {$1 - p_{i}(\age{i}{t})$}(ageing.west);
    
            \draw (xt0.east) -- (spare.west);
            \draw (spare.east) -- node {Yes} (replace.west);
            \draw (spare.east) -- node {No} (broken.west);
        \end{scope}
    \end{tikzpicture}
    }
    \caption{Dynamics of component $i$.}
    \label{fig:comp_dyn}
\end{figure}
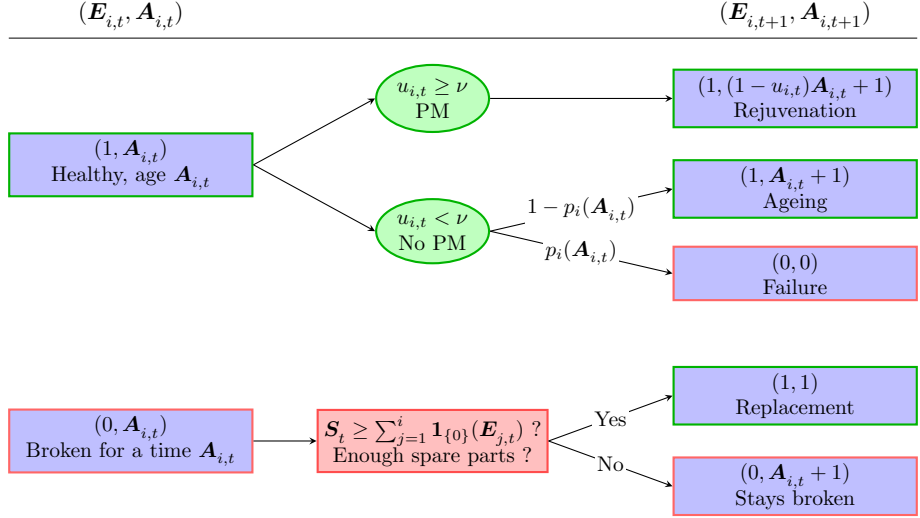
In Fig.~\ref{fig:comp_dyn}, we summarize the dynamics of component $i$ from $t$ to $t+1$. Recall that we have $\statecomptime{i}{t} = (\reg{i}{t}, \age{i}{t}, \spr{i}{t})$. We write the \emph{dynamics of component $i$} on the whole time horizon as:
\begin{equation}
    \dyncomp{i}(\statecomponent{1:i}, \stoalltime, \ctrlcomp{i}, \noisecomp{i}) = 0 \eqfinv
\end{equation}
where $\dyncomp{i}=\na{\dyncomp{i,t}}_{t\in\timesteps}$ is such that:
\begin{align}
    &
      \left\{
      \begin{aligned}
        \dyncomptime{i}{t+1}(\statecomponent{1:i}, \stoalltime, \ctrlcomp{i}, \noisecomp{i})
        &= \statecomponent{i,t+1} - f_{i}\np{\statecomponent{1:i,t}, \sto{t}, \ctrlcomptime{i}{t}, \noise{i}{t+1}}, \quad t \in \timestepsnoend \eqfinv\\
        \dyncomptime{i}{0}(\statecomponent{1:i}, \stoalltime, \ctrlcomp{i}, \noisecomp{i})
        &= \statecomponent{i,0} - x_{i}\eqsepv
      \end{aligned}
          \right. \label{eq:dyn_comp_i}
\end{align}
with $x_{i} = (1, 0, \sprnofail, \ldots, \sprnofail)\transpos$ and $f_{i}$ represents the dynamics we just described for component $i$. An explicit expression of $f_{i}$ is given in~Appendix~\ref{sec:rew_syst_dyn}.

Note that there is a coupling between the dynamics of component $i$ and the stock. There is also a coupling with components $j < i$. This is due to the choice~\textup {(\ref{eq:avail_comp})} of replacing the broken components with the smallest indices first if there are not enough spare parts.

\subsubsection{Dynamics of the stock}
\label{subsubsec:dyn_sto}

For the stock, the initial number of spare parts is $\sto{0} = s$. As PMs can be anticipated, we consider that the required spares are ordered so that they arrive just on time for the scheduled maintenance. Therefore, they do not appear in the dynamics of the stock. A part is used for each CM and a new part is ordered only after the failure of a component. The number of time steps for the supply of a part is $\delay$. Hence, the part ordered after the $d$-th undiscarded failure of component $i$ arrives in the stock at $t+1$ if $\spr{i}{t+1}^{d} = \delay$. This is equivalent to $\spr{i}{t}^{d} = \delay-1$. On the other hand, the number of broken components is 
$\sum_{i=1}^{n} \indic{\na{0}}{\reg{i}{t}}$
and we replace as many of them as possible given the current level of stock $\sto{t}$. Thus, we have:
\begin{equation}
    \sto{t+1} = \sto{t} + \sum_{i=1}^{n} \sum_{d=1}^{\delay} \indic{\na{\delay-1}}{\spr{i}{t}^{d}} - \min\left\{\sto{t}, \ \sum_{i=1}^{n} \indic{\na{0}}{\reg{i}{t}} \right\}, \quad t \in \timestepsnoend \eqfinp
    \label{eq:stock}
\end{equation}
We write the \emph{dynamics of the stock} in compact form as:
\begin{equation}
    \dyncomp{\stoalltime}(\statecomponent{1:n}, \stoalltime) = 0 \eqfinv
\end{equation}
where $\dyncomp{\stoalltime}= \na{\dyncomptime{\stoalltime}{t}}_{t\in \timesteps}$ is such that:
\begin{align}
    &
      \left\{
      \begin{aligned}
        \dyncomptime{\stoalltime}{t+1}(\statecomponent{1:n}, \stoalltime)
        &= \sto{t+1} - f_{\stoalltime}(\statecomptime{1:n}{t}, \sto{t}), \quad t \in \timestepsnoend \eqfinv\\
        \dyncomptime{\stoalltime}{0}(\statecomponent{1:n}, \stoalltime)
        &= \sto{0} - s\eqsepv
      \end{aligned}
          \right. \nonumber\label{eq:dyn_stock}
\end{align}
with $f_{S}$ corresponding to the right-hand side of~\textup {(\ref{eq:stock})}. Note that $\sto{t+1}$ depends on on the current level of stock $\sto{t}$ but also on $\statecomptime{i}{t}$ for all $i \in \components$. The stock is coupling all the components of the system.

Finally, the \emph{dynamics of the whole system} is summarized by the almost sure equality constraint
\(\dyn(\stateallcomp, \stoalltime, \ctrlallcomp, \noiseallcomp) = 0\),
where we have $\dyn : \primspace \times \espacef{W} \rightarrow \espacef{L}$, with
$\dyn = \ba{\na{\dyncomp{i}}_{i\in\components},\dyncomp{\stoalltime}}$ 
and $\espacef{L} = \vardelim{\prod_{i=1}^{n} \espacef{L}_{i}}\times \espacef{L}_{\va{S}}$
where $\espacef{L}_{i}$ is the set of random variables with range in $\bbR^{(\delay+2)(T+1)}$ and $\espacef{L}_{\va{S}}$ is the set of random variables  with range in $\bbR^{(T+1)}$.

We have now completely described the dynamics of the system. In the next part we specify the costs associated to the system. 

\subsection{Costs generated by the system}
\label{subsec:cost}

The costs generated by the system are due to PMs, CMs and forced outages of the unit. In practice as PMs are scheduled in advance, they are cheaper than unpredictable CMs. A forced outage of the unit induces a loss of production. It is characterized by a yearly cost which is higher than that of a PM or a CM. We consider a discount rate $\tau$ meaning that a cost $c$ occurring at time $t$ will be valued $\disc{t} c$ with the discount factor
\(
\disc{t} := \frac{1}{(1+\tau)^{t}}.
\)
We introduce the following notations:
\begin{itemize}
    \item $\costpm_{i,t}(\ctrlcomptime{i}{t})$ is the PM cost incurred at time $t$ for component $i$. Let $\cpm_{i}$ be the cost of a PM operation on component $i$. We set:
    \begin{equation}
        \costpm_{i,t}(\ctrlcomptime{i}{t}) = \disc{t} \cpm_{i}\ctrlcomptime{i}{t}^{2} \eqfinp
        \label{eq:pm_cost}
    \end{equation}
    We use a quadratic cost as it is strongly convex and should favor numerical convergence. In particular, in the case where $0 < \ctrlcomptime{i}{t} < \nu$, which models a situation where no PM is performed, we have $\costpm_{i,t}(\ctrlcomptime{i}{t}) > 0$.\footnote{The fact that $\costpm_{i,t}(\ctrlcomptime{i}{t}) > 0$ when $0 < \ctrlcomptime{i}{t} < \nu$ is favorable from a numerical point of view. For $0 < \ctrlcomptime{i}{t} < \nu$, we always have $\costpm_{i,t}(\ctrlcomptime{i}{t}) > \costpm_{i,t}(0)$ while no PM is performed. Hence, the system dynamics is the same with $\ctrlcomptime{i}{t} = 0$ and $0 < \ctrlcomptime{i}{t} < \nu$ while the cost is higher for $0 < \ctrlcomptime{i}{t} < \nu$. Therefore, the control $\ctrlcomptime{i}{t} = 0$ is always better to $0 < \ctrlcomptime{i}{t} < \nu$. This feature will allow us to clearly distinguish the steps where a PM is performed from the others.}
    \item $\costcm_{i,t}(\statecomptime{i}{t})$ is the CM cost. It is due at the time of the failure of a component, even if there is no spare part to perform the operation immediately. Hence it only occurs when $(\reg{i}{t}, \age{i}{t}) = (0,0)$. Let $\ccm_{i}$ be the cost of a CM operation on component $i$. We have:
    \begin{equation}
        \costcm_{i,t}(\statecomptime{i}{t}) = \disc{t} \ccm_{i}\indic{\na{0}}{\reg{i}{t}}\indic{\na{0}}{\age{i}{t}} \eqfinp
        \label{eq:cm_cost}
    \end{equation}
    \item $\costfo_{t}(\statecomptime{1:n}{t})$ is the forced outage cost. As all components belong to the same production unit, a forced outage occurs when at least one component is in a failed state and the CM has not been performed immediately because of a lack of spare part. 
    Let $\cfo$ be the forced outage cost per time unit. We have:
    \begin{equation}
        \costfo_{t}(\statecomptime{1:n}{t}) = \disc{t} \cfo \min\left\{1,\ \sum_{i=1}^{n} \indic{\na{0}}{\reg{i}{t}}\indic{\bbRpe}{\age{i}{t}}\right\} \eqfinp
        \label{eq:fo_cost}
    \end{equation}
\end{itemize}
In order to consider the previous costs over the whole study period we introduce:
\begin{itemize}
  \item the \emph{total maintenance cost} (preventive and corrective) generated by component $i \in \components$ on the studied period: 
    \begin{equation}
        j_{i}(\statecomponent{i}, \ctrlcomp{i}) = \sum_{t=0}^{T-1} \costpm_{i,t}(\ctrlcomptime{i}{t}) + \sum_{t=0}^{T} \costcm_{i,t}(\statecomptime{i}{t})
      \eqsepv
      \label{eq:costm_compact}
    \end{equation}
  \item the \emph{total forced outage cost} generated by the system during the studied period: 
  \begin{equation}
    \costfo(\statecomponent{1:n}) = \sum_{t=0}^{T} \costfo_{t}(\statecomptime{1:n}{t})\eqsepv
    \label{eq:costfo_compact}
  \end{equation}
\end{itemize}

\subsection{Formulation of the maintenance optimization problem}
\label{subsec:form_opt_pb}

The dynamics of the system is stochastic as it depends on the failure of the components, modelled by the random vector $\noiseallcomp$. The cost function is then stochastic as well. The objective is to find the deterministic maintenance strategy $u \in \espacea{U}$ that minimizes the expected cost generated by the system over all failure scenarios. Hence, the industrial optimal maintenance scheduling problem is formulated as follows:
\begin{equation}
  \begin{aligned}
    \min_{\optimspace} &\ \Besp{\sum_{i=1}^{n} j_{i}(\statecomponent{i}, \ctrlcomp{i}) + \costfo(\statecomponent{1:n})} \\
    \sucht &\ \dyn(\stateallcomp, \stoalltime, \ctrlallcomp, \noiseallcomp) = 0 \eqfinv \quad \Pas \eqfinp
  \end{aligned}
  \label{eq:optim_pb_no_time}
\end{equation}

The term $\expec{\sum_{i=1}^{n} j_{i}(\statecomponent{i}, \ctrlcomp{i})} = \sum_{i=1}^{n} \expec{j_{i}(\statecomponent{i}, \ctrlcomp{i})}$ is additive with respect to the components whereas $\expec{\costfo(\statecomponent{1:n})}$ induces a non-additive coupling between the components. In the theoretical part 
on the Auxiliary Problem Principle (Section~\ref{sec:app}), we will see that these two terms are treated in a different way for the design of a decomposition-coordination algorithm.

\subsection{Complexity of the problem and limits of a direct approach}
\label{subsec:mads}

For maintenance scheduling problems that involve systems that are similar to the one considered in this paper, the algorithm MADS (Mesh Adaptive Direct Search)~\cite{audet_mesh_2006} can be used. In particular, MADS has been successfully applied for periodic maintenance problems in~\cite{aoudjit_planification_2010} and for a system with $4$ components and monthly maintenance decisions in~\cite{alarie_optimization_2019}. However, the problem considered here with $80$ components and general maintenance strategies is more challenging than the two previous ones.

The industrial maintenance problem~\eqref{eq:optim_pb_no_time} is a non-smooth non-convex optimization problem. The number of components $n$ in the system can be up to $80$ and $T$ is $40$ years so $\espacea{U}$ has dimension up to $nT = 3200$ as we have one maintenance decision each year for each component. In \S\ref{subsec:syst_dyn}, the dynamics is modelled with the vector $\dyn$, taking values in $\mathcal{L} = \bbR^{(n(D+2) + 1)(T+1)}$. Hence, there are $(n(D+2) + 1)(T+1)$ constraints in Problem~\eqref{eq:optim_pb_no_time} \emph{i.e.} $13161$ contraints for the systems studied in Section~\ref{sec:num_res} with a delay $D=2$ years for the supply in spare parts. However, any maintenance strategy $u \in \espacea{U}$ is feasible, as we can always choose whether or not to do a PM for each component at each time step. The dynamics $\dyn$ then allows to compute the value of $\stateallcomp$ and $\stoalltime$ given the strategy $u$ and the occurrences of failures modelled by $\noiseallcomp$.

As the problem is non-smooth, non-convex and in high dimension, it is challenging for any frontal resolution approach, including the algorithm MADS. MADS is a blackbox algorithm, meaning that evaluation points are chosen iteratively without using the gradients of the objective function, which may not even be defined. This feature is particularly appealing as the cost function is not differentiable. The algorithm has been initially designed for continuous optimization, therefore it uses the modeling of~\S\ref{subsec:gen_syst} and the cost functions described in~\S\ref{subsec:cost}. In particular, the PM strategies are modelled with a continuous decision variable. When the number of components is not too large, say $n<10$, the maintenance problem~\eqref{eq:optim_pb_no_time} is solved efficiently by MADS: the algorithm converges in around a minute.
When the number of components is large, MADS needs more iterations to explore the high-dimensional space of solutions. As the volume of the search space grows exponentially with the number of components, MADS may not be able to find an effective maintenance strategy in reasonable time (less than a day) for the most demanding case with $80$ components.

To overcome the difficulty of MADS when dealing with large systems, we implement a decomposition of Problem~\eqref{eq:optim_pb_no_time} component by component. 





\section{The Auxiliary Problem Principle for decomposition}
\label{sec:app}

When performing optimization on a large scale system, that is, a system which is described by a large number of variables or constraints, the computation is often highly expensive either in time or in memory requirement (or both). The idea of decomposition is to formulate subproblems involving only smaller subsystems of the original large system. Each subproblem is easier to solve than the global optimization problem and provides its ``local'' solution. Then the goal of coordination is to ensure that gathering the local solutions leads to a global solution. Decomposition-coordination methods usually result in an iterative process alternating an optimization step on the subsystems and a coordination step that updates the subproblems.

The main advantage of decomposition is that the resolution of the small subproblems is faster than the original problem. More than that, the computational complexity of an optimization problem is often superlinear or even exponential in the size of the problem. Hence, the sum of the computational efforts required for the resolution of all subproblems will be lower than for the global problem, even if the resolution of the subproblems must be carried out multiple times. 
Another feature of decomposition methods is that they are naturally adapted to parallelization as each subproblem is independent. This leads to a reduction of computation time.

In this section, we introduce the general framework of the Auxiliary Problem Principle (APP) that allows us to formulate a decomposition-coordination scheme for the optimization problem~\eqref{eq:optim_pb_no_time}.
The APP has first been introduced in~\cite{cohen_optimization_1978} as a unified framework for decomposition methods but also for other classical iterative algorithms. This principle casts the resolution of an optimization problem into the resolution of a sequence of auxiliary problems whose solutions converge to the solution of the original problem. Appropriate choices for the auxiliary problems lead to decomposition-coordination schemes. 

Based on~\cite{carpentier_decomposition-coordination_2017}, we present the main ideas of the APP. Consider the following problem, which we call the master problem:
\begin{equation}
    \min_{u \in \Uad} \costadd(u) + \costdiff(u) \ \text{ such that } \ \dyn(u) \in -C \eqfinv
    \label{eq:app_pb_cst}
\end{equation}
where:
\begin{itemize}
    \item $\Uad$ is a non-empty, closed, convex subset of a Hilbert space $\espacea{U} = \espace{U}_{1} \times \ldots \times \espacea{U}_{N}$ and is decomposable as 
    $\Uad = \Uad_{1} \times \cdots \times \Uad_{N}$
    where for all $i \in \components,\ \Uad_{i} \subset \espacea{U}_{i}$ is a closed convex set;
    \item $C$ is a pointed closed convex cone of a Hilbert space $\mathscr{C} = \mathscr{C}_{1} \times \cdots \times \mathscr{C}_{N}$ and is decomposable as $C= {C}_{1} \times \cdots \times {C}_{N}$
    where for all $i \in \components,\ C_{i} \subset \mathscr{C}_{i}$ is a closed convex cone;
    \item $\costadd: \espacea{U} \rightarrow \bbR$ and $\costdiff: \espacea{U} \rightarrow \bbR$ are convex and lower semi-continuous (\lsc). The function $\costadd + \costdiff$ is coercive on $\Uad$. Moreover, $\costdiff$ is differentiable and $\costadd$ is additive with respect to the decomposition of the admissible space, so that we have for $u = (u_{1}, \ldots, u_{N}) \in \Uad$ with $u_{i} \in \Uad_{i}$ for all $i \in \components$:
    \begin{equation}
      \costadd(u) = \sum_{i=1}^{N} \costaddi(u_{i})\eqfinpv
    \end{equation}
    where $\costaddi : \espacea{U}_{i} \rightarrow \bbR$.
    \item $ \dyn: \espacea{U} \rightarrow \mathscr{C}$ is differentiable and $C$-convex, meaning that:
    \begin{equation}
        \forall u, v \in \espacea{U}, \forall \rho \in [0,1], \rho \dyn(u) + (1-\rho) \dyn(v) - \dyn(\rho u + (1-\rho)v) \in C \eqfinp
    \end{equation}
    We write $\dyn(u) = (\dyncomp{1}(u), \ldots, \dyncomp{N}(u))$ with $\dyncomp{i} : \espacea{U} \rightarrow \mathscr{C}_{i}$ for $i \in \components$.
\end{itemize}
The goal of the APP is to turn the resolution of the master problem on $\Uad$ into the resolution of subproblems on the sets $\Uad_{i}$. In the master problem, there is a non-additive coupling in the cost due to $\costdiff$ and a coupling in the constraint $\dyn$. 
 
We present the APP for the special case of the decomposition by prediction. The construction of a decomposable auxiliary problem relies on the following points:
\begin{enumerate}
    \item The decomposition of the admissible space $\Uad = \Uad_{1} \times \cdots \times \Uad_{N}$ defines the subspace on which each subproblem is solved.
    \item  The decomposition of the cone $C= {C}_{1} \times \cdots \times {C}_{N}$ specifies which part of the constraint is assigned to each subproblem.
\end{enumerate}

Let $\bar{u} = (\bar{u}_{1}, \ldots, \bar{u}_{N}) \in \espacea{U}$ and $\bar{\lambda} = (\bar{\lambda}_{1}, \ldots, \bar{\lambda}_{N}) \in C\dual$ the dual cone of $C$. We recall that:
\begin{equation}
    C\dual = \na{\lambda \in \mathscr{C}\dual, \proscal{\lambda}{\mu} \geq 0 \text{ for all } \mu \in \mathscr{C}} \eqfinv
\end{equation}
with $\mathscr{C}\dual$ being the dual space of $\mathscr{C}$.
Let $K$ be an auxiliary cost and $\auxdyn$ be an auxiliary dynamics satisfying the following properties:
\begin{itemize}
    \item $K: \espacea{U} \rightarrow \bbR$ is convex, \lsc, differentiable and additive: $\displaystyle K(u) = \sum_{i=1}^{N} K_{i} (u_{i})$;
    \item $\auxdyn : \espacea{U} \rightarrow \mathscr{C}$ is differentiable and block-diagonal:
        $\auxdyn(u) = \bp{\auxdyncomp{1}(u_{1}), \ldots, \auxdyncomp{N}(u_{N})}$.
\end{itemize}
The \emph{auxiliary problem} for the master problem~\textup {(\ref{eq:app_pb_cst})} arising from the choice of $K$ and $\auxdyn$ is given by:
\begin{equation}
    \begin{aligned}
        \label{eq:cst_ap}
        \min_{u \in \Uad} & K(u) + \costadd(u)
        + \proscal{\nabla\costdiff(\bar{u}) - \nabla K(\bar{u})}{u} + \proscal{\bar{\lambda}}{\bp{\dyn'(\bar{u}) - \auxdyn'(\bar{u})} \cdot u}\\
        &\sucht  \auxdyn(u) - \auxdyn(\bar{u}) + \dyn(\bar{u})\in -C \eqfinp
    \end{aligned}
\end{equation}

Choosing $K$ additive and $\auxdyn$ block-diagonal ensures that Problem~\textup {(\ref{eq:cst_ap})} decomposes in $N$ independent subproblems
that can be solved in parallel. For $i \in \components$, the $i$-th subproblem is given by:
\begin{equation}
    \begin{aligned}
        \min_{u_{i} \in \Uad_{i}} &
        \begin{multlined}[t]
            K_{i}(u_{i}) + \costaddi(u_{i}) + \proscal{\nabla_{u_{i}}\costdiff(\bar{u}) - \nabla K_{i}(\bar{u}_{i})}{u_{i}} \\
            - \proscal{\bar{\lambda}_{i}}{\auxdyncomp{i}'\np{\bar{u}_{i}} \cdot u_{i}}
            + \sum_{j=1}^{N}\proscal{\bar{\lambda}_{j}}{ \partial_{u_{i}}\dyncomp{j}\np{\bar{u}} \cdot u_{i}}
        \end{multlined}\\
    &\sucht  \auxdyncomp{i}(u_{i}) - \auxdyncomp{i}(\bar{u}_{i}) + \dyncomp{i}(\bar{u})\in -C_{i} \eqfinp
        \label{eq:cst_ap_sub_pb}
    \end{aligned}
\end{equation}
This subproblem only depends on $u_{i} \in \Uad_{i}$ and inherits only the $i$-th component of the constraint.

%

\begin{example}
   Let $\bar{u} \in \espacea{U}$. A canonical choice for the additive auxiliary cost function $K$ is:
    \begin{equation}
        K(u) = \sum_{i=1}^{N} K_{i}(u_{i}) \ \text{ with }\ K_{i}(u_{i}) = 
        \costdiff(\bar{u}_{1:i-1}, u_{i}, \bar{u}_{i+1:n}) \eqfinp
    \end{equation}
    where $u_{i:j} = (u_{i}, \ldots, u_{j})$ for $i \leq j$ and the convention that $u_{i:j}$ is empty if $j > i$. Similarly, a canonical choice for the block-diagonal auxiliary dynamics $\auxdyn$ is:
    \begin{equation}
        \auxdyn(u) = (\auxdyncomp{1}(u_{1}), \ldots, \auxdyncomp{N}(u_{N})) \ \text{ with } \ 
        \auxdyncomp{i}(u_{i}) = \dyncomp{i}(\bar{u}_{1:i-1}, u_{i}, \bar{u}_{i+1:n}) \eqfinp
    \end{equation}
    The general idea is to construct the $i$-th term of the auxiliary function from the original function where only the $i$-th component is allowed to vary.
    \label{ex:canon_tech}
\end{example}
The following statement is the fundamental lemma for the theory of the APP.
\begin{lemma}
    Let $u\opt$ be a solution of the auxiliary problem~\textup {(\ref{eq:cst_ap})} and $\lambda\opt$ be an optimal multiplier for its constraint. If $(u\opt, \lambda\opt) = (\bar{u},\bar{\lambda})$, then $u\opt$ is a solution of the master problem~\textup {(\ref{eq:app_pb_cst})} and $\lambda\opt$ is an optimal multiplier for its constraint.
    \label{lem:fun_app_cst}
\end{lemma}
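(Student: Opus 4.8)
The plan is to compare the first-order (Karush--Kuhn--Tucker) optimality conditions of the auxiliary problem~\eqref{eq:cst_ap} with those of the master problem~\eqref{eq:app_pb_cst}, and to exploit the fixed-point hypothesis $(u\opt,\lambda\opt)=(\bar u,\bar\lambda)$ to make every auxiliary-specific contribution (those built from $K$ and $\auxdyn$) cancel, leaving exactly the optimality conditions of the master problem. Since $\costadd+\costdiff$ is convex and $C$ is a pointed closed convex cone, these conditions will be not merely necessary but also sufficient, which gives the conclusion.

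First I would write the optimality conditions satisfied by $u\opt$ with the optimal multiplier $\lambda\opt \in C\dual$. Writing $N_{\Uad}(\cdot)$ for the normal cone of $\Uad$ and treating the non-smooth term $\costadd$ through its subdifferential, stationarity of the auxiliary Lagrangian reads
\[
0 \in \nabla K(u\opt) + \partial\costadd(u\opt) + \bp{\nabla\costdiff(\bar u) - \nabla K(\bar u)} + \bp{\dyn'(\bar u) - \auxdyn'(\bar u)}\dual \bar\lambda + \auxdyn'(u\opt)\dual \lambda\opt + N_{\Uad}(u\opt) \eqfinv
\]
together with primal feasibility $\auxdyn(u\opt) - \auxdyn(\bar u) + \dyn(\bar u) \in -C$ and complementary slackness $\proscal{\lambda\opt}{\auxdyn(u\opt) - \auxdyn(\bar u) + \dyn(\bar u)} = 0$. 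Because $\lambda\opt$ is assumed to be an optimal multiplier, these relations hold without my having to invoke any constraint qualification on the auxiliary problem.

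Next I would substitute $u\opt = \bar u$ and $\lambda\opt = \bar\lambda$ throughout. In the stationarity relation the two occurrences of $\nabla K(\bar u)$ cancel, and the adjoint terms recombine as $\bp{\dyn'(\bar u) - \auxdyn'(\bar u)}\dual \bar\lambda + \auxdyn'(\bar u)\dual \bar\lambda = \dyn'(\bar u)\dual \bar\lambda$, leaving
\[
0 \in \partial\costadd(\bar u) + \nabla\costdiff(\bar u) + \dyn'(\bar u)\dual \bar\lambda + N_{\Uad}(\bar u) \eqfinp
\]
Similarly, the auxiliary feasibility collapses to $\dyn(\bar u) \in -C$ and the slackness to $\proscal{\bar\lambda}{\dyn(\bar u)} = 0$, while $\bar\lambda \in C\dual$ is immediate. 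These are exactly the optimality conditions of the master problem~\eqref{eq:app_pb_cst} at $\bar u$ with multiplier $\bar\lambda$.

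Finally I would invoke convexity: as $\costadd+\costdiff$ is convex and $\dyn$ is $C$-convex with $C$ a pointed closed convex cone, the collection consisting of primal feasibility, stationarity and complementary slackness is \emph{sufficient} for $\bar u=u\opt$ to be a global minimizer of the master problem and for $\bar\lambda=\lambda\opt$ to be an associated optimal multiplier. I expect the only real difficulty to be the bookkeeping of the cancellation, namely checking that the $K$- and $\auxdyn$-contributions appearing in the auxiliary stationarity condition disappear precisely at the fixed point and that the adjoint operators recombine into $\dyn'(\bar u)\dual$; once this is verified, the sufficiency of the optimality conditions in the convex setting closes the argument.
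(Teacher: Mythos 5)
Your proposal is correct and follows essentially the same route as the paper, which states that the proof ``consists in checking that if $(u\opt, \lambda\opt) = (\bar{u},\bar{\lambda})$, then $(u\opt, \lambda\opt)$ solves the variational inequalities that are satisfied by an optimal solution and an optimal multiplier of the master problem'' and defers the details to a reference. Your substitution of the fixed point into the auxiliary KKT system, the cancellation of the $K$- and $\auxdyn$-terms, and the appeal to convexity ($\costadd+\costdiff$ convex, $\dyn$ being $C$-convex, $\bar{\lambda}\in C\dual$) for sufficiency is exactly the intended argument.
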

The proof consists in checking that if $(u\opt, \lambda\opt) = (\bar{u},\bar{\lambda})$, then
$(u\opt, \lambda\opt)$ solves the variational inequalities that are satisfied by an optimal solution
and an optimal multiplier of the master problem~\textup {(\ref{eq:app_pb_cst})}. More details can be found in~\cite[Section 4]{carpentier_decomposition-coordination_2017}. Lemma~\ref{lem:fun_app_cst} suggests
to use the APP fixed-point Algorithm~\ref{alg:fix_pt_cst}.

\begin{algorithm}[H]
    \begin{algorithmic}[1]
        \myState{Let $(\bar{u}, \bar{\lambda}) = (u^{0}, \lambda^{0}) \in \Uad \times C\dual$ and set $k=0$.}
        \myState{At iteration $k+1$:
        \begin{itemize}
            \item Solve the auxiliary problem~\eqref{eq:cst_ap}. Let $u^{k+1}$ be a solution and $\lambda^{k+1}$ be an optimal multiplier for its constraint.
            \item Set $(\bar{u}, \bar{\lambda}) = (u^{k+1}, \lambda^{k+1})$.
        \end{itemize}
        } \label{lig:ap_cst}
        \myState{If the maximum number of iterations is reached or $\norm{u^{k+1} - u^{k}}$ + $\norm{\lambda^{k+1} - \lambda^{k}}$ is \emph{sufficiently small} then stop, else $k \gets k+1$ and go back to step~\ref{lig:ap_cst}.}
    \end{algorithmic}
    \caption{APP fixed-point algorithm}
    \label{alg:fix_pt_cst}
\end{algorithm}
A convergence result for~Algorithm~\ref{alg:fix_pt_cst}, is given in~\cite{cohen_auxiliary_1980}.

\begin{theorem}{\cite[Theorem 5.1 with $\epsilon=1$]{cohen_auxiliary_1980}}
    Assume that:
    \begin{itemize}
        \item The admissible space $\Uad$ is equal to the whole space $\espacea{U}$,
        \item The constraints are equality constraints, that is $C = \na{0}$,
        \item $
          K(u) = \frac{1}{2}\proscal{u}{\mathsf{K}u}, \ \costadd(u) = 0, \ \costdiff(u) = \frac{1}{2}\proscal{u}{\mathsf{J}u} + \proscal{\mathsf{j}}{u}
          $
          where $\mathsf{K}$ and $\mathsf{J}$ are linear self-adjoint strongly monotone and Lipschitz continuous operators,
          $\mathsf{j}$ is a vector in $\espacea{U}$, and $2\mathsf{K} - \mathsf{J}$ is assumed to be strongly monotone,
        \item $
            \auxdyn(u) = \mathsf{O}u, \ \dyn(u) = \mathsf{T}u + \mathsf{t}
        $
        where the operators $\mathsf{O}$ and $\mathsf{T}$ are linear and surjective and $\mathsf{t}$ is a vector in $\mathscr{C}$,
        \item (Geometric condition) The operator
        $
            2\vardelim{\mathsf{T}\mathsf{J}^{-1}\mathsf{O}\transpos + \mathsf{O}\mathsf{J}^{-1}\mathsf{T}\transpos} - \mathsf{T}\mathsf{J}^{-1}(2\mathsf{K} + \mathsf{J})\mathsf{J}^{-1}\mathsf{T}\transpos
        $
        is strongly monotone.
    \end{itemize}
    Then, 
    the sequence $\ba{\np{u^{k}, \lambda^{k}}}_{k \in \bbN}$ generated by the APP fixed-point algorithm converges strongly to the
    unique optimal solution $(u\opt, \lambda\opt)$ of the original problem~\eqref{eq:app_pb_cst}.
    \label{thm:conv_fix_pt}
\end{theorem}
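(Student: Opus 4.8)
The plan is to exploit the fully linear--quadratic structure to turn Algorithm~\ref{alg:fix_pt_cst} into an affine fixed-point iteration and to prove that this iteration is a strict contraction in a suitable energy norm, the two monotonicity hypotheses supplying the coercivity estimates required. First I would pin down the target point. With $\Uad=\espacea{U}$ and $C=\na{0}$, the master problem~(\ref{eq:app_pb_cst}) minimizes the strongly convex quadratic $\costdiff(u)=\tfrac12\proscal{u}{\mathsf{J}u}+\proscal{\mathsf{j}}{u}$ subject to $\mathsf{T}u+\mathsf{t}=0$, so its solution is characterized by the KKT relations $\mathsf{J}u\opt+\mathsf{j}+\mathsf{T}\transpos\lambda\opt=0$ and $\mathsf{T}u\opt+\mathsf{t}=0$. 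Eliminating $u\opt=-\mathsf{J}^{-1}(\mathsf{j}+\mathsf{T}\transpos\lambda\opt)$ yields $\mathsf{T}\mathsf{J}^{-1}\mathsf{T}\transpos\lambda\opt=\mathsf{t}-\mathsf{T}\mathsf{J}^{-1}\mathsf{j}$; since $\mathsf{J}^{-1}$ is strongly monotone and $\mathsf{T}$ is surjective, $\mathsf{T}\mathsf{J}^{-1}\mathsf{T}\transpos$ is boundedly invertible, whence $(u\opt,\lambda\opt)$ exists and is unique.

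Next I would verify that the iteration is well defined and reduce it to an explicit error recursion. Each auxiliary problem~(\ref{eq:cst_ap}) specializes to a strongly convex quadratic (by strong monotonicity of $\mathsf{K}$) under the affine constraint $\mathsf{O}u=\mathsf{O}u^{k}-\mathsf{T}u^{k}-\mathsf{t}$; surjectivity of $\mathsf{O}$, through invertibility of $\mathsf{O}\mathsf{K}^{-1}\mathsf{O}\transpos$, produces a unique $(u^{k+1},\lambda^{k+1})$, so the map $(u^{k},\lambda^{k})\mapsto(u^{k+1},\lambda^{k+1})$ is affine and, by Lemma~\ref{lem:fun_app_cst}, its fixed points are exactly the solutions of~(\ref{eq:app_pb_cst}). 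Writing the stationarity condition of~(\ref{eq:cst_ap}) and subtracting the KKT relations above, with $\tilde u^{k}:=u^{k}-u\opt$ and $\tilde\lambda^{k}:=\lambda^{k}-\lambda\opt$, I obtain the coupled recursion $\mathsf{K}\tilde u^{k+1}=(\mathsf{K}-\mathsf{J})\tilde u^{k}-(\mathsf{T}-\mathsf{O})\transpos\tilde\lambda^{k}-\mathsf{O}\transpos\tilde\lambda^{k+1}$ together with $\mathsf{O}\tilde u^{k+1}=(\mathsf{O}-\mathsf{T})\tilde u^{k}$. Applying $\mathsf{O}\mathsf{K}^{-1}$ to the first relation and using the second lets me solve for the implicit term $\mathsf{O}\transpos\tilde\lambda^{k+1}$ and thus obtain an explicit affine recursion in $(\tilde u^{k},\tilde\lambda^{k})$.

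The heart of the argument, and the step I expect to be the main obstacle, is the descent estimate. Taking the inner product of the primal error equation with the increment $\tilde u^{k+1}-\tilde u^{k}$ and rewriting the multiplier terms via the constraint relation, the hypothesis that $2\mathsf{K}-\mathsf{J}$ is strongly monotone furnishes a strictly negative contribution of the form $-\proscal{\tilde u^{k+1}-\tilde u^{k}}{(2\mathsf{K}-\mathsf{J})(\tilde u^{k+1}-\tilde u^{k})}$, controlling the purely primal part. The \emph{geometric condition}, namely strong monotonicity of $2\bp{\mathsf{T}\mathsf{J}^{-1}\mathsf{O}\transpos+\mathsf{O}\mathsf{J}^{-1}\mathsf{T}\transpos}-\mathsf{T}\mathsf{J}^{-1}(2\mathsf{K}+\mathsf{J})\mathsf{J}^{-1}\mathsf{T}\transpos$, is precisely what bounds the dual cross-terms once the dual recursion of the previous step is substituted, so that a Lyapunov function $V^{k}$ built from the $\mathsf{K}$-energy $\proscal{\tilde u^{k}}{\mathsf{K}\tilde u^{k}}$ augmented by a dual term satisfies $V^{k+1}-V^{k}\le-\kappa\,\norm{\tilde u^{k+1}-\tilde u^{k}}^{2}$ for some $\kappa>0$; equivalently the affine iteration is a strict contraction in the associated norm. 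The delicate point is the bookkeeping that shows this specific operator, and no other, governs the dual contribution, which is exactly where the two hypotheses must be combined.

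Finally I would conclude. Strict contraction in the energy norm makes $\sequence{(u^{k},\lambda^{k})}{k\in\bbN}$ a Cauchy sequence, hence strongly convergent to the unique fixed point of the iteration; by Lemma~\ref{lem:fun_app_cst} that fixed point is the unique solution $(u\opt,\lambda\opt)$ of~(\ref{eq:app_pb_cst}), giving the claimed strong convergence.
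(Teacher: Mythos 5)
A point of reference first: the paper does not prove Theorem~\ref{thm:conv_fix_pt} at all --- it imports the statement verbatim from \cite[Theorem 5.1]{cohen_auxiliary_1980} --- so there is no in-paper proof to compare yours against; you are in effect reconstructing Cohen's argument from scratch.

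Your reduction is sound as far as it goes: the KKT characterization of $(u\opt,\lambda\opt)$, the invertibility of $\mathsf{T}\mathsf{J}^{-1}\mathsf{T}\transpos$ and $\mathsf{O}\mathsf{K}^{-1}\mathsf{O}\transpos$, and the error recursion $\mathsf{K}\tilde u^{k+1}+(\mathsf{J}-\mathsf{K})\tilde u^{k}+(\mathsf{T}-\mathsf{O})\transpos\tilde\lambda^{k}+\mathsf{O}\transpos\tilde\lambda^{k+1}=0$ together with $\mathsf{O}\tilde u^{k+1}=(\mathsf{O}-\mathsf{T})\tilde u^{k}$ are all correct consequences of the linear--quadratic specialization. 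The genuine gap is that the entire analytical content of the theorem sits in the step you defer: you assert, without computation, that testing the recursion against the increment and substituting the dual relation yields a Lyapunov inequality $V^{k+1}-V^{k}\le-\kappa\,\nnorm{\tilde u^{k+1}-\tilde u^{k}}^{2}$ in which the dual cross-terms are controlled exactly by the geometric condition. You never exhibit the Lyapunov function $V^{k}$ (in particular the dual term you ``augment'' the $\mathsf{K}$-energy with), nor the algebra by which the specific operator $2\bp{\mathsf{T}\mathsf{J}^{-1}\mathsf{O}\transpos+\mathsf{O}\mathsf{J}^{-1}\mathsf{T}\transpos}-\mathsf{T}\mathsf{J}^{-1}(2\mathsf{K}+\mathsf{J})\mathsf{J}^{-1}\mathsf{T}\transpos$ --- and not some other combination --- emerges; that is precisely where the proof lives, and labelling it ``delicate bookkeeping'' leaves it unproven. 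Moreover, even granting that inequality, your final step is too quick: a decrease of the form $V^{k+1}-V^{k}\le-\kappa\,\nnorm{\tilde u^{k+1}-\tilde u^{k}}^{2}$ is a Fej\'er-type estimate, not a strict contraction; it gives $\nnorm{\tilde u^{k+1}-\tilde u^{k}}\to 0$ and monotonicity of $V^{k}$, but by itself it does not make $\ba{(u^{k},\lambda^{k})}_{k\in\bbN}$ a Cauchy sequence nor force $\tilde u^{k}\to 0$ and $\tilde\lambda^{k}\to 0$. To close the argument you must either show that the affine error map has norm strictly below one in the energy inner product (equivalently, spectral radius below one), or supplement the Lyapunov decrease with a coercivity and identification step proving that every cluster point is the unique fixed point and that the dual errors vanish as well. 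As written, the proposal is a correct reduction plus an unproven core estimate and an incomplete passage to the limit.
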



In this section, we have studied a master optimization problem with couplings coming both from the part $\costdiff$ of the cost function and the constraint $\dyn$. The APP allows to turn the resolution of the master problem into the iterative resolution of an auxiliary problem that is decomposable into independent subproblems of smaller size. In the next section, we present the application of the APP to solve the maintenance optimization problem described in~Section~\ref{sec:description}.




\section{Application of the APP for the industrial maintenance problem}
\label{sec:app_syst}

The application of the APP to Problem~\eqref{eq:optim_pb_no_time} is done as follows:
\begin{enumerate}
    \item We specify a decomposition of the admissible space and of the space of contraints~(\S\ref{subsec:decomp_space}).
    \item We construct an auxiliary problem that is adapted to the proposed decomposition using a suitable auxiliary cost and auxiliary dynamics~(\S\ref{subsec:constr_aux_pb}).
    \item The industrial system involves integer variables whereas the APP is based on variational techniques. Therefore, we proceed to a continuous relaxation of the system in order to compute the gradients of the dynamics that appear in the auxiliary problem~(\S\ref{subsec:relax_indus}).
    \item We give the explicit formulation of the independent subproblems that arise from the decomposition of the auxiliary problem~(\S\ref{subsec:decomp_ap}).
    \item We see that the subproblem on the stock is easy to solve numerically. We take advantage of this feature to design an efficient implementation of the APP fixed-point algorithm mixing a parallel and sequential strategy for the resolution of the subproblems~(\S\ref{subsec:fix_pt_alg}).
\end{enumerate}
The APP fixed-point algorithm presented in~\S\ref{subsec:fix_pt_alg} is then applied to numerical test cases in Section~\ref{sec:num_res}.

\subsection{Decomposition of the space by component}
\label{subsec:decomp_space}

We start by specifying the decomposition of the admissible space and of the cone of constraints that is considered for the design of the decomposition by component.

Considering the physical nature of the industrial system composed of $n$ components and a stock,
we choose to decompose the problem in $n+1$ subproblems and call this decomposition a \emph{decomposition by component}.
More precisely, for $i\in\components$, the $i$-th subproblem
is called \emph{subproblem on component $i$} since it is solved on $\primdecomp{i}$ and only involves the dynamics of component $i$. The $(n+1)$-th subproblem is called
\emph{subproblem on the stock} since it is solved on $\espacef{S}$ and only involves the dynamics of the stock. This means that the admissible space 
$\primspace$ of Problem~\eqref{eq:optim_pb_no_time} is decomposed as a product of a $n+1$ subspaces:
\begin{equation}
    \primspace = (\primdecomp{1}) \times \ldots \times (\primdecomp{n}) \times \espacef{S}\eqfinv
    \label{eq:decomp_comp}
\end{equation}
where, for $(\stateallcomp, \stoalltime, \ctrlallcomp) = ((\statecomponent{1}, \ldots, \statecomponent{n}), \stoalltime, (\ctrlcomp{1}, \ldots, \ctrlcomp{n}))\in \primspace$, we have
\begin{equation}
  (\statecomponent{i}, \ctrlcomp{i}) \in \primdecomp{i} \text{ for all }  i \in \components
  \text{ and } \stoalltime \in \espacef{S}\eqfinp
\end{equation}
The constraints in Problem~\eqref{eq:optim_pb_no_time}, that is
$\dyn(\stateallcomp, \stoalltime, \ctrlallcomp, \noiseallcomp) \in -C$ with $C = \na{0}_{\espacef{L}}$ is decomposed through the following cone decomposition:
\begin{equation}
    C = \na{0}_{\espacef{L}} = \na{0}_{\espacef{L}_{1}} \times \ldots \times \na{0}_{\espacef{L}_{n}} \times \na{0}_{\espacef{L}_{\va{S}}} = C_{1} \times \ldots C_{n} \times C_{\stoalltime} \eqfinv
    \label{eq:decomp_cone}
  \end{equation}
  where for $i \in \components, \ \na{0}_{\espacef{L}_{i}}, \na{0}_{\espacef{L}_{\stoalltime}}$ and $\na{0}_{\espacef{L}}$ denote the null function of $\espacef{L}_{i}, \espacef{L}_{\stoalltime}$ and $\espacef{L}$ respectively. We recall that $\dyncomp{i}$ takes values in $\espacef{L}_{i}$ and $\dyncomp{\stoalltime}$ takes values in $\espacef{L}_{\va{S}}$.

\subsection{Construction of an auxiliary problem}
\label{subsec:constr_aux_pb}

In this section, we choose the auxiliary functions that lead to the construction of a decomposable auxiliary problem.

Problem~\eqref{eq:optim_pb_no_time} is not directly decomposable by component because of couplings, highlighted in~Section~\ref{sec:description}, that we recall now.
\begin{itemize}
\item The expected maintenance cost $\expec{\sum_{i=1}^{n} j_{i}(\statecomponent{i}, \ctrlcomp{i})}$ is additive with respect to the decomposition by component and can be identified with $\costadd$ in~Section~\ref{sec:app}.
\item The forced outage cost $\costfo$ induces a non-additive coupling between the components. The expected forced outage cost $\expec{\costfo(\statecomponent{_{1:n}})}$ can be identified with $\costdiff$ in~Section~\ref{sec:app}.
\item The dynamics $\dyncomp{i}$ of component $i$ induces a coupling with the stock and all components with index $j<i$. The stock dynamics, $\dyncomp{\stoalltime}$, is coupling the stock with all components.
\end{itemize}
In order to obtain a decomposition of Problem~\eqref{eq:optim_pb_no_time} by component, we use the canonical technique from~Example~\ref{ex:canon_tech}. We define an additive mapping $K$ and a block diagonal mapping $\auxdyn$
so that the resulting auxiliary problem is decomposable.
We also choose to augment the auxiliary cost $K$ with a strongly convex term in order to ease the numerical convergence of the method.
Let $\stateallcompbar = (\statecompbar{1}, \ldots, \statecompbar{n}) \in \espacef{X}$, $\stoalltimebar \in \espacef{S}$, $\ctrlallcompbar \in \espacea{U}$, $\multallcompbar \in C\dual$ and $\gamma_{x}, \gamma_{s}, \gamma_{u} > 0$. We consider:
\begin{itemize}
\item An additive auxiliary cost function $K: \primspace \rightarrow \bbR$:
  \begin{equation}
    K(\stateallcomp, \stoalltime, \ctrlallcomp) = \sum_{i=1}^{n} K_{i} (\statecomponent{i}, \ctrlcomp{i}) + K_{\stoalltime}(\stoalltime)\eqfinv
  \end{equation}
  with:
  \begin{equation}
    \begin{aligned}
      K_{i} (\statecomponent{i}, \ctrlcomp{i}) &= \Besp{\costfo(\statecompbar{1:i-1}, \statecomponent{i}, \statecompbar{i+1:n}) + \frac{\gamma_{x}}{2} \sqnorm{\statecomponent{i}} + \frac{\gamma_{u}}{2} \sqnorm{\ctrlcomp{i}}}, \quad i \in \components \eqfinv\\
      K_{\stoalltime}(\stoalltime) &= \Besp{\frac{\gamma_{s}}{2} \sqnorm{\stoalltime}}\eqfinv
    \end{aligned}
    \end{equation}
    \label{def:aux_cost}
    \item A block-diagonal auxiliary dynamics mapping $\auxdyn : \primspace \times \espacef{W} \rightarrow \espacef{L}$:
    \begin{equation}
      \auxdyn(\stateallcomp, \stoalltime, \ctrlallcomp, \noiseallcomp) = (\auxdyncomp{1}(\statecomponent{1}, \ctrlcomp{1}, \noisecomp{1}), \ldots, \auxdyncomp{n}(\statecomponent{n}, \ctrlcomp{n}, \noisecomp{n}), \auxdyncomp{\stoalltime}(\stoalltime))\eqfinv
    \end{equation}
    with:
    \begin{equation}
        \begin{aligned}
            \auxdyncomp{i}(\statecomponent{i}, \ctrlcomp{i}, \noisecomp{i}) &= \dyncomp{i}(\statecompbar{1:i-1}, \statecomponent{i}, \stoalltimebar, \ctrlcomp{i}, \noisecomp{i}), \quad i \in \components \eqfinv \\
            \auxdyncomp{\stoalltime}(\stoalltime) &= \dyncomp{\stoalltime}(\statecompbar{1:n}, \stoalltime)\eqfinv
        \end{aligned}
        \label{eq:aux_dyn}
    \end{equation}
    with $\statecompbar{1:0}$ being by convention an empty vector.
\end{itemize}

We can now write an auxiliary problem for~\eqref{eq:optim_pb_no_time}. Assume that the dynamics $\dyn$ is differentiable. In this case, $\auxdyn$ is differentiable and the auxiliary problem writes:
\begin{equation}
    \begin{aligned}
        \min_{\optimspace}
        &\ \begin{multlined}[t]
            \espe \bigg(\sum_{i=1}^{n} \left(j_{i}(\statecomponent{i}, \ctrlcomp{i}) + \costfo(\statecompbar{1:i-1}, \statecomponent{i}, \statecompbar{i+1:n})\right)\\
            + \frac{\gamma_{x}}{2} \sqnorm{ \stateallcomp - \stateallcompbar } + \frac{\gamma_{s}}{2} \sqnorm{ \stoalltime - \stoalltimebar } + \frac{\gamma_{u}}{2} \sqnorm{\ctrlallcomp- \ctrlallcompbar }\\
            + \proscal{ \multallcompbar }{ (\dyn'(\prevopt, \noiseallcomp) - \auxdyn'(\prevopt, \noiseallcomp))\cdot (\stateallcomp, \stoalltime, \ctrlallcomp) } \bigg)
        \end{multlined}\\
        \sucht &\ \auxdyn(\stateallcomp, \stoalltime, \ctrlallcomp, \noiseallcomp) = 0\eqfinp
    \end{aligned}
    \label{eq:aux_pb_indus}
\end{equation}


By construction, the auxiliary problem~\textup {(\ref{eq:aux_pb_indus})} is decomposable with respect to the decompositions~\textup {(\ref{eq:decomp_comp})} and \textup {(\ref{eq:decomp_cone})}. 

\subsection{Relaxation of the system}
\label{subsec:relax_indus}



In the industrial case, the assumptions of~Theorem~\ref{thm:conv_fix_pt} are far from being satisfied: the forced outage cost $\costfo$ is not quadratic, nor even convex,
the dynamics $\dyn$ is not linear, and it is hard to check if the geometric condition is satisfied.
Moreover, the APP relies on variational techniques and requires the mappings $\dyn$ and $\auxdyn$ to be differentiable
as the derivatives $\dyn'$ and $\auxdyn'$ appear in Problem~\textup {(\ref{eq:aux_pb_indus})}. However, the dynamics $\dyn$ in Problem~\eqref{eq:optim_pb_no_time} involves integer variables so $\dyn'$ is not defined. To overcome this difficulty, we propose a continuous relaxation of the system with relaxed cost and dynamics that are differentiable almost everywhere. It is possible to use a differentiable relaxation of the system but this requires more implementation efforts. As we are far from the conditions of convergence of the algorithm, nothing ensures that a differentiable relaxation would give better results than the simple one that is defined below. 



\subsubsection{State variable relaxation}
\label{subsubsec:relax_state}

Let $i \in \components$ and $t \in \timesteps$. Recall from~\S\ref{subsec:gen_syst} that:
\begin{itemize}
    \item $\statecomptime{i}{t} = (\reg{i}{t}, \age{i}{t}, \spr{i}{t})$ takes values in $\left\{0,1\right\} \times \bbR \times (\{\sprnofail\} \cup \bbRp)^{D}$,
    \item $\sto{t}$ takes values in $\bbN$.
\end{itemize}
We relax the integrity constraint on $\reg{i}{t}$ and $\sto{t}$, so we allow:
\begin{itemize}
    \item $\statecomptime{i}{t} = (\reg{i}{t}, \age{i}{t}, \spr{i}{t})$ to take values in $[0,1] \times \bbR \times (\{\sprnofail\} \cup \bbRp)^{D}$,
    \item $\sto{t}$ to take values in $\bbR$.
\end{itemize} 

\begin{remark}
    We lose the physical interpretation of the relaxed variables.
    \begin{itemize}
        \item If $0 < \reg{i}{t} < 1$, we could think that component $i$ is in a degraded regime where the closer $\reg{i}{t}$ is to $1$ the healthier it is. This interpretation is however not exact as the health of a component is only characterized by its age $\age{i}{t}$. The probability of failure of a component is indeed only a function of $\age{i}{t}$.
        \item A value $\sto{t} \in \bbR$ means that there can be a non-integer number of parts in the stock. \finremark
    \end{itemize}
    \skipfinremark
\end{remark}

\subsubsection{Relaxation of the dynamics}
\label{subsubsec:relax_dyn}

The dynamics of the original system has been described in~\S\ref{subsec:syst_dyn} and an explicit expression is given in~Appendix~\ref{sec:rew_syst_dyn}. This expression involves indicator functions $\findi{\mathcal{A}}$ for some set $\mathcal{A}$. The dynamics is then non-continuous. Replacing the original indicator function $\findi{\mathcal{A}}$ with a continuous relaxed version 
allows to define a relaxed dynamics for the system.



\begin{definition}
  Let $\mathcal{A} \subset \bbR^{p}, \ x \in \bbR^{p}$ and $\alpha > 0$.
  We define a continuous relaxation $\findi{\mathcal{A}}\rel$ with parameter $\alpha$ of the indicator function $\findi{\mathcal{A}}$ as:
  \begin{equation}
    \findi{\mathcal{A}}\rel(x) =
    \left\{
      \begin{aligned}
        & 1 - 2\alpha \times \dis{x}{\mathcal{A}} &&\quad \text{if  } \dis{x}{\mathcal{A}} \leq \frac{1}{2\alpha} \eqsepv\\
        & 0 && \quad \text{if  } \dis{x}{\mathcal{A}} > \frac{1}{2\alpha}\eqsepv\\
      \end{aligned}
    \right.
  \end{equation}
  where $\dis{x}{\mathcal{A}}$ is the Euclidean distance between $x$ and the set $\mathcal{A}$.
  \label{def:relax_indic}
\end{definition}
The relaxation of the indicator function is illustrated on Fig.~\ref{fig:indic_relax}.
\begin{figure}[htbp]
    \includegraphics[width=0.49\textwidth]{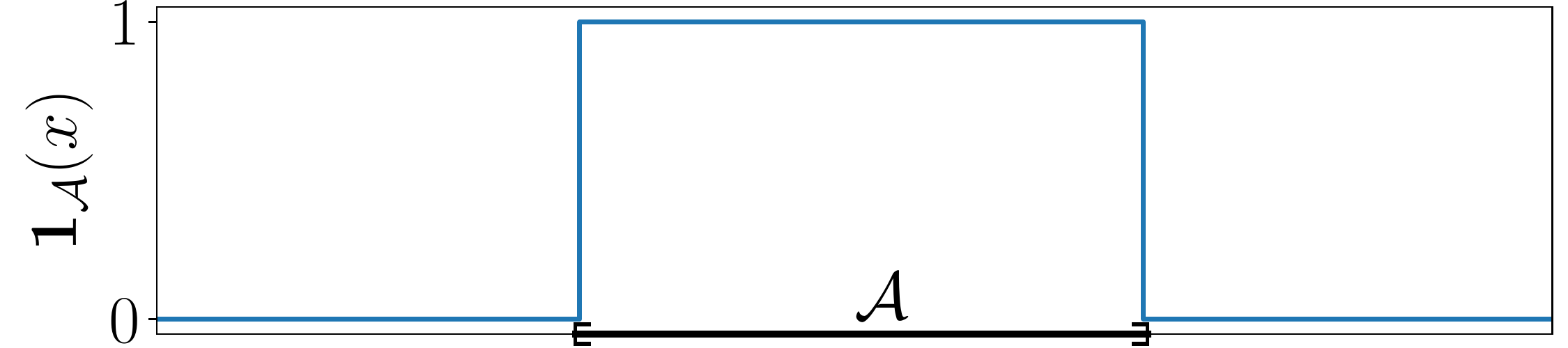}
    \hfill
    \includegraphics[width=0.49\textwidth]{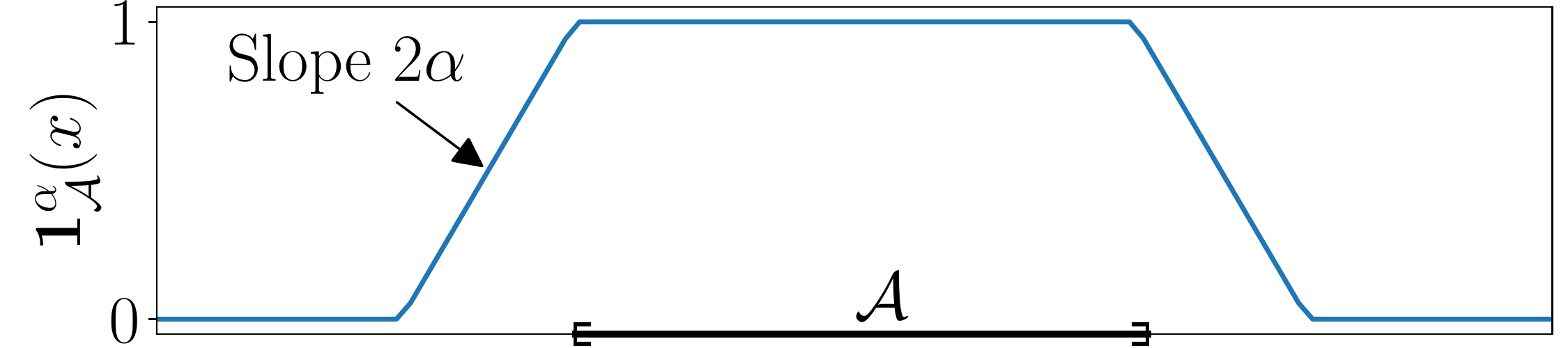}
    \caption{Illustration of the relaxation of the indicator function.}
    \label{fig:indic_relax}
\end{figure}


The parameter $\alpha$ quantifies the stiffness of the relaxation. As $\alpha \to +\infty$, the relaxed indicator $\findi{\mathcal{A}}\rel$ converges pointwise towards the original indicator function $\findi{\mathcal{A}}$. As $\alpha \to 0$, the relaxed indicator $\findi{\mathcal{A}}\rel$ converges pointwise towards the constant function $1$. Note that for all $\alpha > 0$, the relaxed indicator $\findi{\mathcal{A}}\rel$ is continuous and differentiable almost everywhere.

A continuous relaxation of the system dynamics of parameter $\alpha$ is obtained by replacing the occurrences of the indicator function by its relaxed version. For instance, the relaxed dynamics of the stock is obtained from~\textup {(\ref{eq:stock})}:
\begin{equation}
  \sto{t+1} = \sto{t} + \sum_{i=1}^{n} \sum_{d=1}^{\delay} \indicrel{\na{\delay-1}}{\spr{i}{t}^{d}} -
  \min \Bp{\sto{t}, \ \sum_{i=1}^{n} \indicrel{\na{0}}{\reg{i}{t}}} \eqfinp
\end{equation}

Additional technical details and an explicit expression for the relaxed dynamics of the components are given in~Appendix~\ref{sec:rel_dyn}. The relaxed dynamics and relaxed auxiliary dynamics are denoted by $\dynrel$ and $\auxdynrel$ respectively.

\subsubsection{Cost relaxation}
\label{subsubsec:relax_cost}

The relaxation of the maintenance cost and forced outage cost is constructed using the same technique as for the dynamics.
Let $i \in \components$, $t \in \timesteps$ and $\alpha > 0$ be given.
\begin{itemize}
    \item The relaxed maintenance cost for component $i$ at time $t$ with parameter $\alpha$ is defined as:
    \begin{equation}
        \left\{
            \begin{aligned}
                \costmrel_{i,t}(\statecomptime{i}{t}, \ctrlcomptime{i}{t}) &= \disc{t} \cpm_{i}\ctrlcomptime{i}{t}^{2} + \disc{t} \ccm_{i}\indicrel{\na{0}}{\reg{i}{t}}\indicrel{\na{0}}{\age{i}{t}}, \quad t \in \timestepsnoend \eqfinv \\
                \costmrel_{i,T}(\statecomptime{i}{T}) &= \disc{T} \ccm_{i}\indicrel{\na{0}}{\reg{i}{T}}\indicrel{\na{0}}{\age{i}{T}} \eqfinp
            \end{aligned}
        \right.
        \label{eq:def_rel_maint_cost}
    \end{equation}
    \item The relaxed forced outage cost at time $t$ with parameter $\alpha$ is defined as:
    \begin{equation}
        \costforel_{t}(\statecomptime{1:n}{t}) = \disc{t}\cfo \min\left\{1,\ \sum_{i=1}^{n} \indicrel{\na{0}}{\reg{i}{t}}\indicrel{\bbRpe}{\age{i}{t}}\right\}
        \label{eq:def_rel_fo_cost}\eqfinp
    \end{equation}
\end{itemize}
Similarly, as in~\textup {(\ref{eq:costm_compact})} and~\textup {(\ref{eq:costfo_compact})}, we set:
\begin{equation}
    \costmrel_{i}(\statecomponent{i}, \ctrlcomp{i}) = \sum_{t=0}^{T-1} \costmrel_{i,t}(\statecomptime{i}{t}, \ctrlcomptime{i}{t}) + \costmrel_{i,T}(\statecomptime{i}{T})
    \quad \text{and} \quad 
    \costforel(\statecomponent{1:n}) =  \sum_{t=0}^{T} \costforel_{t}(\statecomptime{1:n}{t})\eqfinp
\end{equation}

We use the relaxed version of the dynamics $\dynrel$, of the auxiliary dynamics $\auxdynrel$, and of the costs $\costmrel_{i}$ and $\costforel$ in the auxiliary problem~\textup {(\ref{eq:aux_pb_indus})}. Hence, the objective function and the dynamics are continuous and differentiable almost everywhere with respect to $(\stateallcomp, \stoalltime, \ctrlallcomp)$. Note that the choice of the relaxation parameter $\alpha$ plays an important role. We can guess that choosing a low value for $\alpha$ leads to a problem that may be easier to solve numerically than with a larger value of $\alpha$. However, with a low value of $\alpha$ the relaxed dynamics and cost do not represent well the industrial problem whereas with a large $\alpha$ the original and relaxed dynamics are close. 
More details on the influence of the parameter $\alpha$ and how it is chosen in practice are given in~\S\ref{subsec:param_tuning}.

\subsection{Explicit expression of the subproblems}
\label{subsec:decomp_ap}

By construction, the auxiliary problem~\textup {(\ref{eq:aux_pb_indus})} can be decomposed into $n$ independent subproblems on the components and a subproblem on the stock. In this section, we provide the explicit expression of these subproblems. The APP fixed-point algorithm will be applied on the relaxed system, so we use the relaxed dynamics and cost in the writing of the subproblems. Formally, the gradients of $\dynrel$ and $\auxdynrel$ are not defined everywhere. By abuse of notation, the subproblems are given as if $\dynrel$ and $\auxdynrel$ were differentiable. Appendix~\ref{sec:indic_rel} gives details on how we handle the points where the relaxed indicator is not differentiable. 

The subproblem on component $i \in \components$ is:
\begin{equation}
    \begin{aligned}
        \min_{(\statecomponent{i}, \ctrlcomp{i}) \in \primdecomp{i}}
        &\ \begin{multlined}[t]
            \espe \bigg( \costmrel_{i}(\statecomponent{i}, \ctrlcomp{i}) + \costforel(\statecompbar{1:i-1}, \statecomponent{i}, \statecompbar{i+1:n}) 
            \\+ \frac{\gamma_{x}}{2}\sqnorm{ \statecomponent{i} - \statecompbar{i}} + \frac{\gamma_{u}}{2}\sqnorm{ \ctrlcomp{i} - \ctrlcompbar{i}} \\
            + \proscal{ \multcompbar{\stoalltime}}{ \partial_{\statecomponent{i}} \dyncomprel{\stoalltime}(\statecompbar{1:n}, \stoalltimebar)\cdot \statecomponent{i} }\\
            + \sum_{j=i+1}^{n}\proscal{ \multcompbar{j}}{ \partial_{\statecomponent{i}} \dyncomprel{j}(\statecompbar{1:j}, \stoalltimebar, \ctrlcompbar{j}, \noisecomp{j})\cdot \statecomponent{i} } \bigg)
        \end{multlined}\\
        \sucht &\ \auxdyncomprel{i}(\statecomponent{i}, \ctrlcomp{i}, \noisecomp{i}) = 0\eqfinp
    \end{aligned}
    \label{eq:aux_pb_i}
\end{equation}

The subproblem on the stock is:
\begin{equation}
    \begin{aligned}
        \min_{\stoalltime \in \espacef{S}} \
        &\ \espe \left(\frac{\gamma_{s}}{2} \sqnorm{ \stoalltime - \stoalltimebar} + \sum_{i=1}^{n} \proscal{\multcompbar{i}}{ \partial_{\stoalltime} \dyncomprel{i}(\statecompbar{1:i}, \stoalltimebar, \ctrlcompbar{i}, \noisecomp{i}) \cdot \stoalltime } \right)\\
        \sucht &\ \auxdyncomprel{\stoalltime}(\stoalltime) = 0\eqfinp
    \end{aligned}
    \label{eq:aux_pb_sto}
\end{equation}


\subsection{The APP fixed-point algorithm for the industrial system}
\label{subsec:fix_pt_alg}


We can now solve the original maintenance optimization problem~\eqref{eq:optim_pb_no_time} using the fixed-point algorithm~\ref{alg:fix_pt_cst}. In this section, we give details on the practical implementation of the algorithm and present an efficient mixed parallel and sequential strategy for the resolution of the subproblems, that is tailored for the industrial optimization problem.

At each iteration, we solve the auxiliary problem~\eqref{eq:aux_pb_indus}. In a fully parallel version of the APP fixed-point algorithm, solving the auxiliary problem~\eqref{eq:aux_pb_indus} boils down to the parallel resolution of the $n+1$ independent subproblems defined by~\eqref{eq:aux_pb_i}--\eqref{eq:aux_pb_sto}.
\begin{enumerate}
    \item The subproblems on the components~\eqref{eq:aux_pb_i} are solved with the blackbox algorithm MADS \cite{audet_mesh_2006}. At iteration~$k$, we solve subproblem $i \in \components$ with:
    \begin{align}
        (\stateallcompbar, \stoalltimebar, \ctrlallcompbar) = (\stateallcomp^{k}, \stoalltime^{k}, \ctrlallcomp^{k}) \quad \text{and} \quad \multallcompbar = \multallcomp^{k} \eqfinp
    \end{align}
    MADS outputs a primal solution $(\statecomponent{i}^{k+1}, \ctrlcomp{i}^{k+1})$. The optimal multiplier $\multcomp{i}^{k+1}$ is computed afterwards using the adjoint state. The full derivation of the backward recursion for the multipliers in the industrial case is carried out in~Appendix~\ref{sec:opt_mult}.
    \item The subproblem on the stock~\eqref{eq:aux_pb_sto} is very easy to solve numerically. At iteration $k$ of the fully parallel APP fixed-point algorithm, we use:
    \begin{align}
        (\stateallcompbar, \stoalltimebar, \ctrlallcompbar) = (\stateallcomp^{k}, \stoalltime^{k}, \ctrlallcomp^{k}) \quad \text{and} \quad \multallcompbar = \multallcomp^{k} \eqfinp
    \end{align}
    The constraint $\auxdyncomprel{\stoalltime}(\stoalltime) = 0$ represents the dynamics of the stock with $\stateallcompbar = \stateallcomp^{k}$ being fixed. The value of $\stateallcompbar$ completely determines the dynamics of the stock. Hence, solving the subproblem on the stock just boils down to simulate its dynamics, we get a primal solution $\stoalltime^{k+1}$. The optimal multiplier $\multcomp{\stoalltime}^{k+1}$ is also computed using the adjoint state, see Appendix~\ref{sec:opt_mult}.
\end{enumerate}


The features of the subproblem on the stock suggest to change the fully parallel strategy into a mixed parallel/sequential strategy. 
\begin{enumerate}
    \item At iteration $k$, the $n$ subproblems on the components are still solved in parallel using:
    \begin{align}
        (\stateallcompbar, \stoalltimebar, \ctrlallcompbar) = (\stateallcomp^{k}, \stoalltime^{k}, \ctrlallcomp^{k}) \quad \text{and} \quad \multallcompbar = \multallcomp^{k} \eqfinp
    \end{align}
    This yields a solution $(\statecomponent{i}^{k+1}, \ctrlcomp{i}^{k+1}, \multcomp{i}^{k+1})$ for each subproblem $i\in \components$.
    \item The difference arises for the subproblem on the stock. At iteration $k$, we immediately use the output of the subproblems on the components. This means that we set before solving the subproblem on the stock at iteration $k$:
    \begin{align}
        (\stateallcompbar, \ctrlallcompbar) = (\stateallcomp^{k+1}, \ctrlallcomp^{k+1}) \quad \text{and} \quad (\multcompbar{1}, \ldots,\multcompbar{n}) &= (\multcomp{1}^{k+1}, \ldots, \multcomp{n}^{k+1})\eqfinp
    \end{align}
    This implies that the subproblem on the stock at iteration $k$ can be solved only after all subproblems on the components have been solved.
\end{enumerate}
With this strategy, we see experimentally that the number of iterations for convergence is reduced without penalizing the computation time per iteration as the subproblem on the stock can be solved in negligible time. This results in an overall speed up of the algorithm. The APP fixed-point algorithm with the mixed parallel/sequential strategy is presented  in~Algorithm~\ref{alg:fix_pt_indus_mix}. The termination criterion is a maximum number of iterations $M \in \bbN$. This is the version that is used for the numerical experiments.

\begin{algorithm}[htbp]
    \begin{algorithmic}[1]
        \myState{Start with $(\prevopt) = (\stateallcomp^{0}, \stoalltime^{0}, u^{0})$, $\multallcompbar=\multallcomp^{0}$}
        \For{$k = 0, \ldots, M-1$}
            \ParFor{$i \in \na{1, \ldots, n}$}
                \myState{Solve the subproblem~\eqref{eq:aux_pb_i} on component $i$.}
                \myState{Let $(\statecomponent{i}^{k+1}, \ctrlcomp{i}^{k+1})$ be a solution and $\multcomp{i}^{k+1}$ be an optimal multiplier.}
            \EndParFor
            \myState{Set $(\stateallcompbar, \ctrlallcompbar) = ((\statecomponent{1}^{k+1}, \ldots, \statecomponent{n}^{k+1}), (\ctrlcomp{1}^{k+1}, \ldots, \ctrlcomp{n}^{k+1}))$}
            \myState{Set $(\multcompbar{1}, \ldots,\multcompbar{n}) = (\multcomp{1}^{k+1}, \ldots, \multcomp{n}^{k+1})$}
            \myState {Solve the subproblem~\eqref{eq:aux_pb_sto} on the stock.}
            \myState{Let $\stoalltime^{k+1}$ be a solution and $\multcomp{\stoalltime}^{k+1}$ be an optimal multiplier.}
            \myState{Set $\stoalltimebar= \stoalltime^{k+1}$ and $\multcompbar{\stoalltime}= \multcomp{\stoalltime}^{k+1}$}
        \EndFor
        \myState{\Return the maintenance strategy $u^{M}$}
    \end{algorithmic}
    \caption{APP fixed-point algorithm with a mixed parallel/sequential strategy}
    \label{alg:fix_pt_indus_mix}
\end{algorithm}




\section{Numerical results}
\label{sec:num_res}

In this part, we present the results of the decomposition methodology applied to Problem~\eqref{eq:optim_pb_no_time}. The expectation in~\eqref{eq:optim_pb_no_time} cannot be evaluated exactly, so we solve a Monte-Carlo approximation of the problem with $Q$ fixed failure scenarios $\omega_{1}, \ldots, \omega_{Q} \in \Omega$:
\begin{equation}
  \begin{aligned}
    \min_{\optimspace}\ & \frac{1}{Q}\sum_{q=1}^{Q}\left(\sum_{i=1}^{n} j_{i}\left(\statecomponent{i}(\omega_{q}\right), \ctrlcomp{i}) + \costfo\left(\statecomponent{1:n}(\omega_{q})\right)\right) \\
    \sucht &\dyn\left(\stateallcomp(\omega_{q}), \stoalltime(\omega_{q}), \ctrlallcomp, \noiseallcomp(\omega_{q})\right) = 0, \quad q \in \{1, \ldots, Q \}
    \eqfinp
  \end{aligned}
\label{eq:optim_pb_saa}
\end{equation}

The aim of this work is to show that the decomposition by prediction can efficiently solve a maintenance problem with a large number of components. We consider two test cases with the characteristics given in~Table~\ref{tab:syst_char}. As a maintenance strategy is parametrized by the vector $u \in \espacea{U}$ given in~\eqref{eq:adm_u}, the optimization problem for the $80$-component case involves $nT = 80 \times 40 = 3200$ variables.

The synthetic systems described in Table~\ref{tab:syst_char} do not exactly represent a real situation but are meant to be a proxy for the real cases presented at the beginning of \S\ref{subsec:gen_syst}:
\begin{enumerate}
    \item The situation where we consider turbines of the same production unit is simpler than the synthetic systems as it has fewer components (up to $10$ for the real case and $80$ in the synthetic case).
    \item On the other hand, the situation where we consider turbines across several production units is slightly more complicated than the synthetic systems: in our model all components belong to the same unit, therefore the forced outage cost is simpler than in reality. However, the decomposition methodology can be applied to this real case exactly as for the synthetic cases.
    \item For the case of a system with several families of components with a stock of spare parts for each family, the decomposition methodology is still applicable and will result in an additional subproblem for each stock. However, we have seen in~\S\ref{subsec:fix_pt_alg} that these subproblems are solved in negligible time as they just amount to the simulation of the dynamics of the corresponding stock. Hence, these additional subproblems do not slow down the method compared to the synthetic cases. The expression of the forced outage cost may also be slightly more complicated than for the synthetic cases.
\end{enumerate}
The costs and the failure distributions used in Table~\ref{tab:syst_char} are of the same order of magnitude as what is encountered in practice. In the first test case, the failure distributions are chosen so that the mean time to failure of the components is in the lower limit of the real cases, which induces a large number of failures in the system. This is the situation where the PM strategy has the most significant influence on the economic performance of the system. In the second test case, we consider components with a longer lifespan but the stock has fewer parts than in the first case. This induces a strong coupling between the stock and the components. These cases can be considered to be stress tests for the optimization algorithms.

\begin{table}[htbp]
    \centering
    \begin{tabularx}{0.8\textwidth}{X c c}
        \toprule
        \textbf{Parameter} & \textbf{Case 1} & \textbf{Case 2}\\
        \midrule
        Number of components $n$ & $80$ & $80$\\
        Initial number of spare parts $\sto{0}$ & $\mathbf{16}$ & $\mathbf{5}$\\
        Horizon $T$ & $40$ years & $40$ years\\
        Time step $\Delta t$ & $1$ year & $1$ year\\
        Number of time steps for supply $D$ & $2$ & $2$\\
        Discount rate $\tau$ & $0.08$ & $0.08$\\
        Maintenance threshold $\nu$ & $0.9$ & $0.9$\\
        Yearly forced outage cost $\cfo$ & $10000$ k\euro / year & $10000$ k\euro / year\\
        Number of failure scenarios $Q$ & $\mathbf{100}$ & $\mathbf{300}$ \\
        \midrule
        \textbf{Components characteristics} \\
        \midrule
        PM cost $\cpm$ & $50$ k\euro & $50$ k\euro\\
        CM cost $\ccm$ & $200$ k\euro & $200$ k\euro\\
        Failure distribution & $\mathbf{\mathrm{\textbf{Weib}}(3, 10)}$ & $\mathbf{\mathrm{\textbf{Weib}}(3, 20)}$\\
        Mean time to failure & $\mathbf{8.93}$ \textbf{years} & $\mathbf{17.86}$ \textbf{years}\\ 
        \bottomrule
    \end{tabularx}
    \caption{Characteristics of the test cases. Bold data represent the features that differ between the two cases.}
    \label{tab:syst_char}
\end{table}


\subsection{Parameter tuning}
\label{subsec:param_tuning}

Several parameters have to be tuned in order to apply the APP fixed-point algorithm. The parameters $\gamma_{x}, \gamma_{s}, \gamma_{u}$ appear in the auxiliary problem and $\alpha$ characterizes the relaxation of the system. 
These parameters are used in the APP fixed-point algorithm but not in the reference algorithm as the latter uses the original dynamics of the system to perform the optimization. 

As the maintenance threshold $\nu$ is used both in the reference algorithm and the decomposition method, it is fixed to a value giving good performance with the reference algorithm. The same value of $\nu$ is used in the APP fixed-point algorithm. We do not consider changing the value of $\nu$ as it would mean that the reference algorithm changes for each different value of $\nu$, making a fair comparison harder and the results less clear to analyze.

\subsubsection{Description of the parameters}

In the auxiliary problem~\textup {(\ref{eq:aux_pb_indus})}, the value of $\gamma = (\gamma_{x}, \gamma_{s}, \gamma_{u})$ influences the numerical behavior of the algorithm. We choose to increase the values of $\gamma_{x}, \gamma_{s}$ and $\gamma_{u}$ at each iteration of the APP fixed-point algorithm. The insight is that we can use low values of $\gamma$ in the first iterations to get close to a good solution. Then, we use high values of $\gamma$ to avoid oscillations of the solution of the auxiliary problem. Indeed, with a large value of $\gamma$, the solution $(\stateallcomp^{k+1}, \stoalltime^{k+1}, \ctrlallcomp^{k+1})$ of the auxiliary problem at iteration $k+1$ is close to the previous solution $(\stateallcomp^{k}, \stoalltime^{k}, \ctrlallcomp^{k})$. The value of $\gamma_{u}$ evolves from iteration $k$ to $k+1$ of the APP fixed-point algorithm with an additive step $\Delta \gamma > 0$, so that:
\begin{equation}
    \gamma^{k+1}_{u} = \gamma^{k}_{u} + \Delta \gamma \eqfinp
\end{equation}
Then, $\gamma_{x}$ and $\gamma_{s}$ are chosen to be proportional to $\gamma_{u}$ with ratios $r_{x} > 0$ and $r_{s} > 0$ respectively, so that:
\begin{align}
    \gamma_{x}^{k+1} = \gamma_{u}^{k+1}/r_{x} \ \text{ and } \ \gamma_{s}^{k+1} = \gamma_{u}^{k+1}/r_{s} \eqfinp
\end{align}
The motivation for this choice is that the vectors $\stateallcomp$, $\stoalltime$ and $\ctrlallcomp$ need some rescaling so that their norms are of the same order of magnitude. 
The parameters $\gamma_{u}^{0}, r_{x}, r_{s}, \Delta \gamma$ have to be tuned.

The other parameter that requires attention is the relaxation parameter $\alpha$. Similarly as for~$\gamma$, we choose to increase the value of $\alpha$ at each iteration. A low value of $\alpha$ makes the problem easier to solve numerically but does not represent well the real problem. As $\alpha$ increases, the relaxed problem is closer and closer to the real one but becomes harder to solve. To ease the resolution, we use the solution of the auxiliary problem at iteration $k$ as a warm start in MADS for iteration $k+1$. The value of $\alpha$ varies from iteration $k$ to $k+1$ of the APP fixed-point algorithm with a step $\Delta \alpha > 0$ so that:
\begin{equation}
    \alpha^{k+1} = \alpha^{k} + \Delta \alpha \eqfinp
\end{equation}
The values of $\alpha^{0}$ and $\Delta \alpha$ have to be tuned.
We denote by:
\begin{equation}
    p = (\gamma_{u}^{0}, r_{x}, r_{s}, \Delta \gamma, \alpha^{0}, \Delta \alpha) \in \bbR^{6} \eqfinv
\end{equation}
the vector of parameters that have to be adjusted for the algorithm.

\subsubsection{Tuning methodology}

Choosing a good value for $p$ is difficult in practice. We cannot afford to test the fixed-point algorithm with many different values of $p$ directly on the $80$-component case, as one run of the optimization takes more than a day. In order to find an appropriate value for $p$, we rely on a smaller system than the $80$-component case. This \emph{small system} is designed so that the runs of the fixed-point algorithm take a reasonable amount of time, $4$ hours in our case. The small system consists of $10$ components with $2$ spare parts initially. All the other characteristics of this small system are the same as Case 1 in Table~\ref{tab:syst_char}. In a way, the $10$-component system can be seen as a downscaling of the $80$-component case of interest. 

The idea of the tuning procedure is to run the decomposition method on the $10$-component system several times, but with a different value for $p$ at each run. To do so, we start by defining bounds on the value of the parameters, they are given in~Table~\ref{tab:distrib_param}. These bounds are chosen to be wide in order to ensure that they contain good values of the parameters.
\begin{table}[htbp]
    \centering
    \begin{tabular}{ccccccc}
        \toprule
        Parameter & $\gamma_{u}^{0}$ & $r_{x}$ & $r_{s}$ & $\Delta\gamma$ & $\alpha^{0}$ & $\Delta\alpha$ \\
        \midrule
        Bounds & $[1, 100]$ & $[1, 10^{4}]$ & $[1, 10^{3}]$ & $[0, 100]$ & $[2, 200]$ & $[0, 200]$\\
        \bottomrule
    \end{tabular}
    \caption{Bounds for the parameters of the of the fixed-point algorithm.
    }
    \label{tab:distrib_param}
\end{table}
Then, $200$ samples of $p$ are drawn with an optimized Latin Hypercube Sampling~\cite{damblin_numerical_2013}. We run the fixed-point algorithm with each of the sampled values on the $10$-component system.
The value of $p$ which gives the best result on the small system is used for the case with $80$ components. The chosen value is:
\begin{equation}
    p = (17.32,\ 7434,\ 815.3,\ 1.360 \times 10^{-1},\ 46.51,\ 135.5)\eqfinp
    \label{eq:best_param}
\end{equation}
We give the value of $p$ with $4$ significant digits to emphasize that the performance of the APP fixed-point algorithm is very sensitive to this value. A sensitivity analysis has been performed using a Morris method~\cite{campolongo_effective_2007} but no clear pattern for a good choice of $p$ has been identified. More details on the methodology and results of the sensitivity analysis can be found in~\cite[\S10.7]{bittar_stochastic_2021-1}.

\subsection{Comparison of the decomposition method with MADS}

Now that all the parameters of the fixed-point algorithm~\ref{alg:fix_pt_indus_mix} are set, we can solve Problem~\eqref{eq:optim_pb_saa}. The performance of the fixed-point algorithm is compared with a reference algorithm, which is the blackbox algorithm MADS applied directly on Problem~\eqref{eq:optim_pb_saa}. We consider the systems of $80$ components described in~Table~\ref{tab:syst_char}. Parameters of the computation are given in~Table~\ref{tab:comp_set}. When running the optimization, the reference algorithm uses the original dynamics as it does not use gradient information. The APP fixed-point algorithm runs with the value of $p$ in~\eqref{eq:best_param} that parametrizes the auxiliary problem and the relaxation of the system.

\begin{table}[htbp]
    \begin{tabularx}{\textwidth}{c>{\centering\arraybackslash}X>{\centering\arraybackslash}X}
        & Decomposition & MADS \\
        \toprule
        Fixed-point iterations & $50$ & /\\
        Cost function calls & $10^{3}$/subproblem/iteration & $10^{6}$ \\
        Cost and dynamics & Relaxed & Original\\
        \midrule
        Processor model & \multicolumn{2}{c}{Intel\textregistered \ Xeon\textregistered \ Processor E5-2680 v4, $2.4$ GHz} \\
        \bottomrule
    \end{tabularx}
    \caption{Parameters of the computation for the two algorithms.}
    \label{tab:comp_set}
\end{table}

\begin{remark}
    The APP fixed-point algorithm solves a decomposable auxiliary problem at each iteration, this algorithm is designed to be parallelized. It runs on $80$ processors so that the subproblems on the components are solved in parallel. The reference algorithm MADS runs only on one processor. Note that it is also possible to parallelize MADS~\cite{audet_parallel_2008}, although the implementation is not as straightforward as for the decomposition method. The parallel version of MADS has not been tested.  
\end{remark}

The maintenance strategies returned by the two algorithms are then evaluated on a common set of $10^{5}$ failure scenarios, distinct from those used for the optimization. For the two strategies, the evaluation is done with the original dynamics of the system in order to ensure a fair comparison. The two algorithms return a maintenance strategy with $\ctrlcomptime{i}{t} \in [0,1]$ for $(i,t) \in \components \times \timestepsnoend$. From the operational perspective, PMs make the components as good as new. Hence, for the evaluation of the strategy, the controls are projected on $\{0,1\}$: we consider that if $\ctrlcomptime{i}{t} \geq \nu$, then the PM makes the component as good as new, otherwise no PM is performed. The comparison between the two maintenance strategies is fair as we use the same procedure for their evaluation.

\subsubsection{Results for Case 1}

The mean cost is $14509$ k\euro \ with MADS and $12855$ k\euro \ with the decomposition which represents a gain of $12\%$. The values of some quantiles are gathered in~Table~\ref{tab:quant} and the distribution of the cost is represented on~Fig.~\ref{fig:comp_distrib_cost}.
\begin{table}[htbp]
    \begin{tabularx}{\textwidth}{Xccccccc}
        & $1\%$ & $5\%$ & $25\%$ & $50\%$ & $75\%$ & $95\%$ & $99\%$\\
        \toprule
        Decomposition & $11641$ & $11981$ & $12469$ & $12838$ & $13221$ & $13788$ & $14182$\\
        MADS & $13339$ & $13663$ & $14138$ & $14494$ & $14846$ & $15406$ & $15817$\\
       \bottomrule 
    \end{tabularx}
    \caption{Quantiles of the cost of the two maintenance strategies (k\euro).}
    \label{tab:quant}
\end{table}
\begin{figure}[htbp]
    \centering
    \begin{minipage}[t]{.49\textwidth}
        \includegraphics[width=\textwidth]{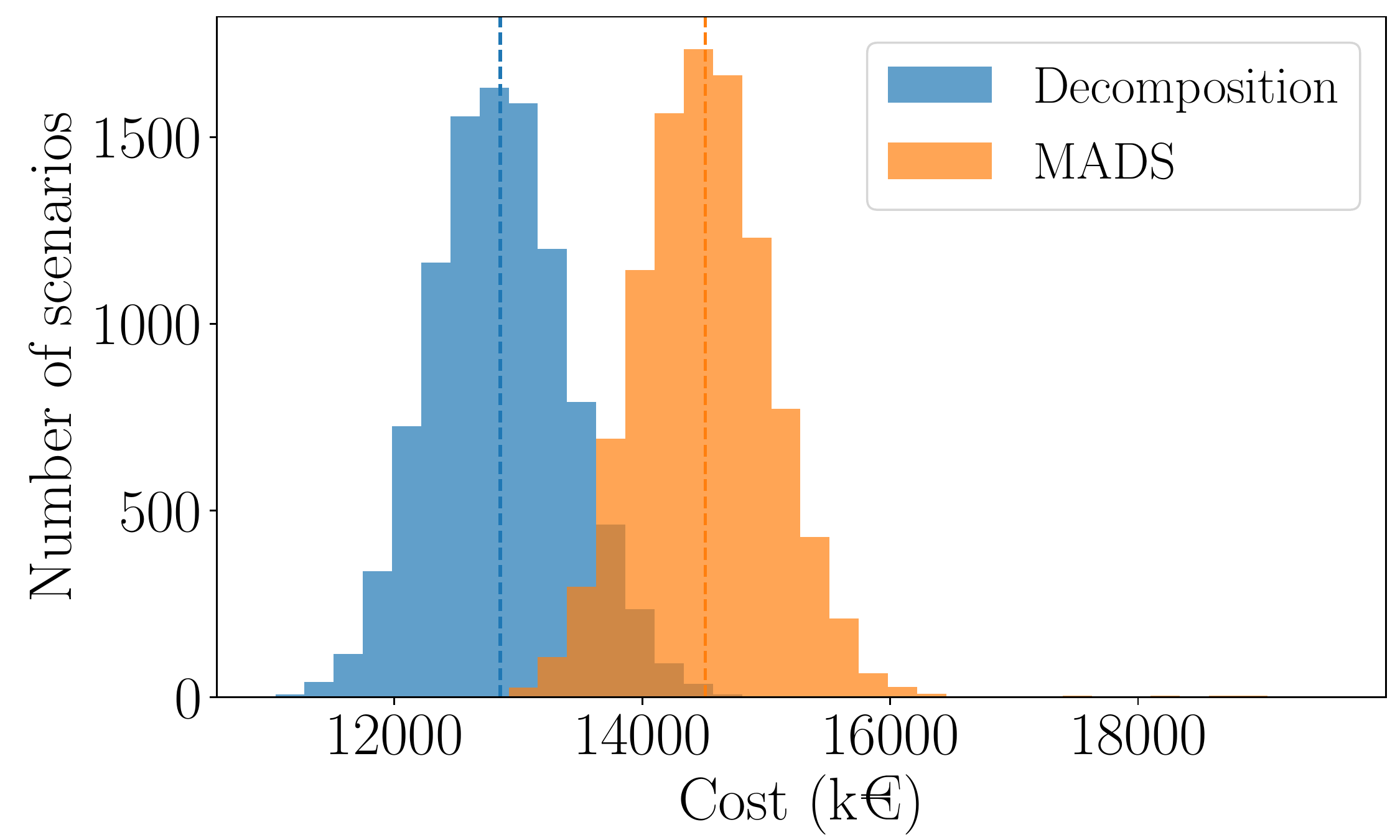}
        \caption{Distribution of the cost for the two maintenance strategies. The dashed lines represent the expected cost for both strategies.}
        \label{fig:comp_distrib_cost}
    \end{minipage}
    \hfill
    \begin{minipage}[t]{.49\textwidth}
        \includegraphics[width=\textwidth]{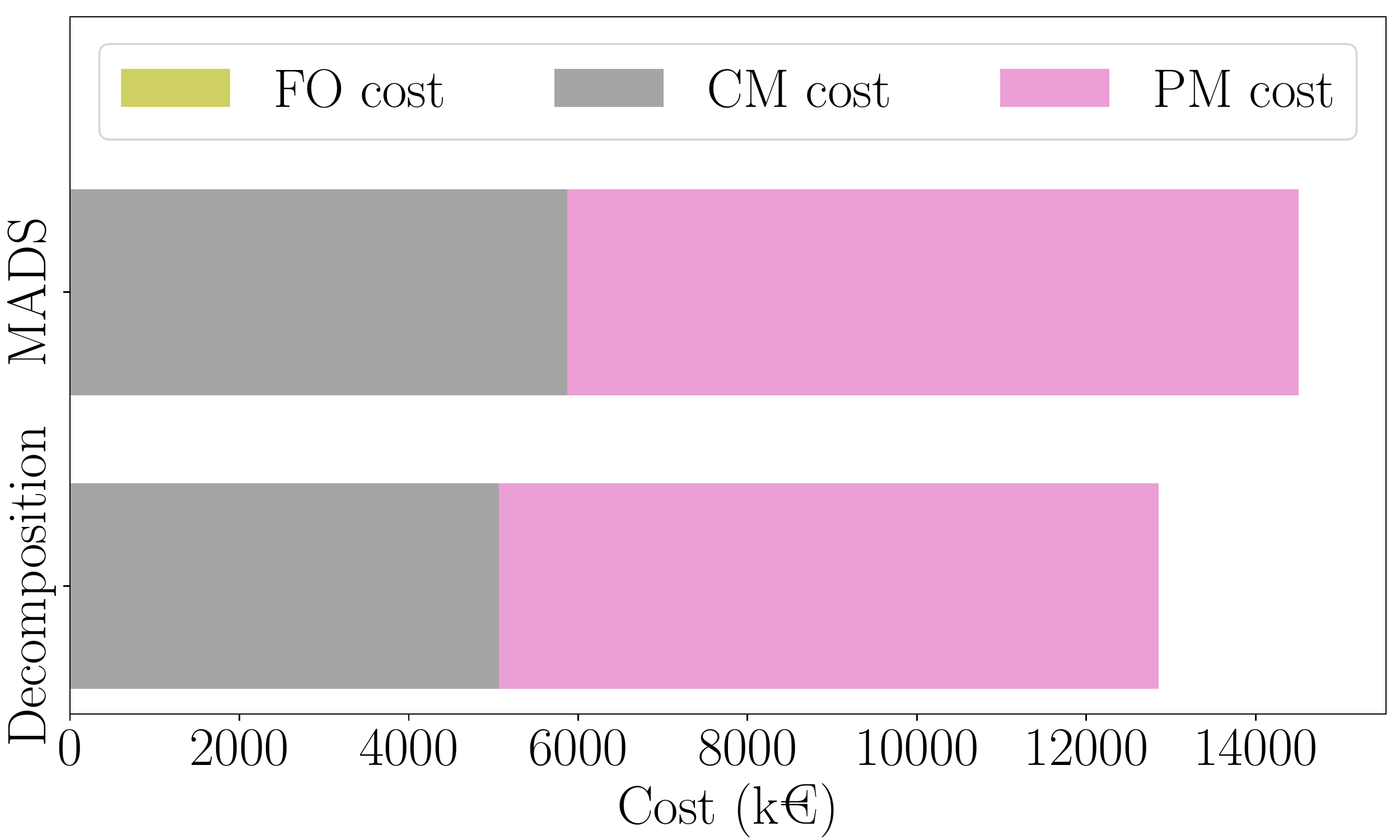}
        \caption{Part of the PM, CM and forced outage cost in the total expected cost.}
        \label{fig:repart_cost}
    \end{minipage}
\end{figure}
Figure~\ref{fig:repart_cost} outlines that both the CM cost and the PM cost are lower with the decomposition strategy. In fact the decomposition strategy performs more PMs than MADS strategy (Table~\ref{tab:nb_pm}) but these PMs occur later in time (Fig.~\ref{fig:pm_cumul}). Because of the discount factor, the decomposition strategy is cheaper even if more PMs are performed. 
The forced outage cost is not visible on~Fig.~\ref{fig:repart_cost} as it represents $8.7$~k\euro \ for MADS strategy and $2.7$~k\euro \ for the decomposition strategy: there are very few scenarios where a forced outage occurs, again with an advantage for the decomposition strategy ($6$ occurrences in $10^{5}$ failure scenarios for the decomposition versus $36$ for MADS). 
Overall, the decomposition strategy performs PMs more effectively than MADS, both in terms of cost and in terms of number of failures and forced outages of the system.

Table~\ref{tab:nb_pm} also provides the computation time for both methods. As we design a long term maintenance schedule, the focus is on the performance of the maintenance strategy and not on the running time of the methods that are acceptable for our purpose. The parameters of the algorithms have been tuned to allow similar numbers of function calls for both methods. The decomposition is slower as it uses the relaxed dynamics which is slower to compute than the original dynamics used by MADS.
\begin{table}[htbp]
    \begin{tabularx}{\textwidth}{l>{\centering\arraybackslash}c>{\centering\arraybackslash}X}
        & Decomposition & MADS\\
        \toprule
        Total number of PMs & $669$ & $595$\\
        Mean number of PMs / component & $8.4$ & $7.4$\\
        Mean number of failures / component & $1.18$ & $1.48$\\
        Number of forced outages / Number of scenarios  & $6/10000$ & $36/10000$\\
        Computation time & $2$ days, $1$ hour & $1$ day, $7$ hours\\
        \bottomrule
    \end{tabularx}
    \caption{Number of PMs, failures and forced outages, and computation time for each strategy.}
    \label{tab:nb_pm}
\end{table}

The cumulative number of PMs can be visualized on~Fig.~\ref{fig:pm_cumul}. 
A striking feature with the decomposition strategy is that there are almost no PM in the first three years. This exploits the fact that the components are new. The reference algorithm MADS applied directly on the original problem does not detect this feature. In fact, the region of the space corresponding to not doing any PM in first three years jointly for all components is a very small subset of the admissible space of the original problem and is not explored by MADS. On the other hand, the subproblems in the APP fixed-point algorithm act on an individual component, it is then easier to figure out that doing no PM in the first three years is profitable.

On the second half of the period of study, the decomposition strategy performs more PMs than MADS. Because of the discount factor, these PMs are cheaper than PMs performed in the first years. Thus, the decomposition strategy manages to avoid more failures than MADS strategy in the second half of the horizon. We see that the decomposition strategy has a time-dependent behavior which is not the case for MADS strategy that always performs PMs at the same rate. The design of an effective time-dependent strategy is only possible with the decomposition as it solves subproblems of dimension $40$ on individual components. The algorithm MADS applied on the whole problem cannot detect these features as the search space of dimension $3200$ is too large.


\begin{figure}[htbp]
    \centering
    \begin{minipage}[t]{.49\textwidth}
        \includegraphics[width = \textwidth]{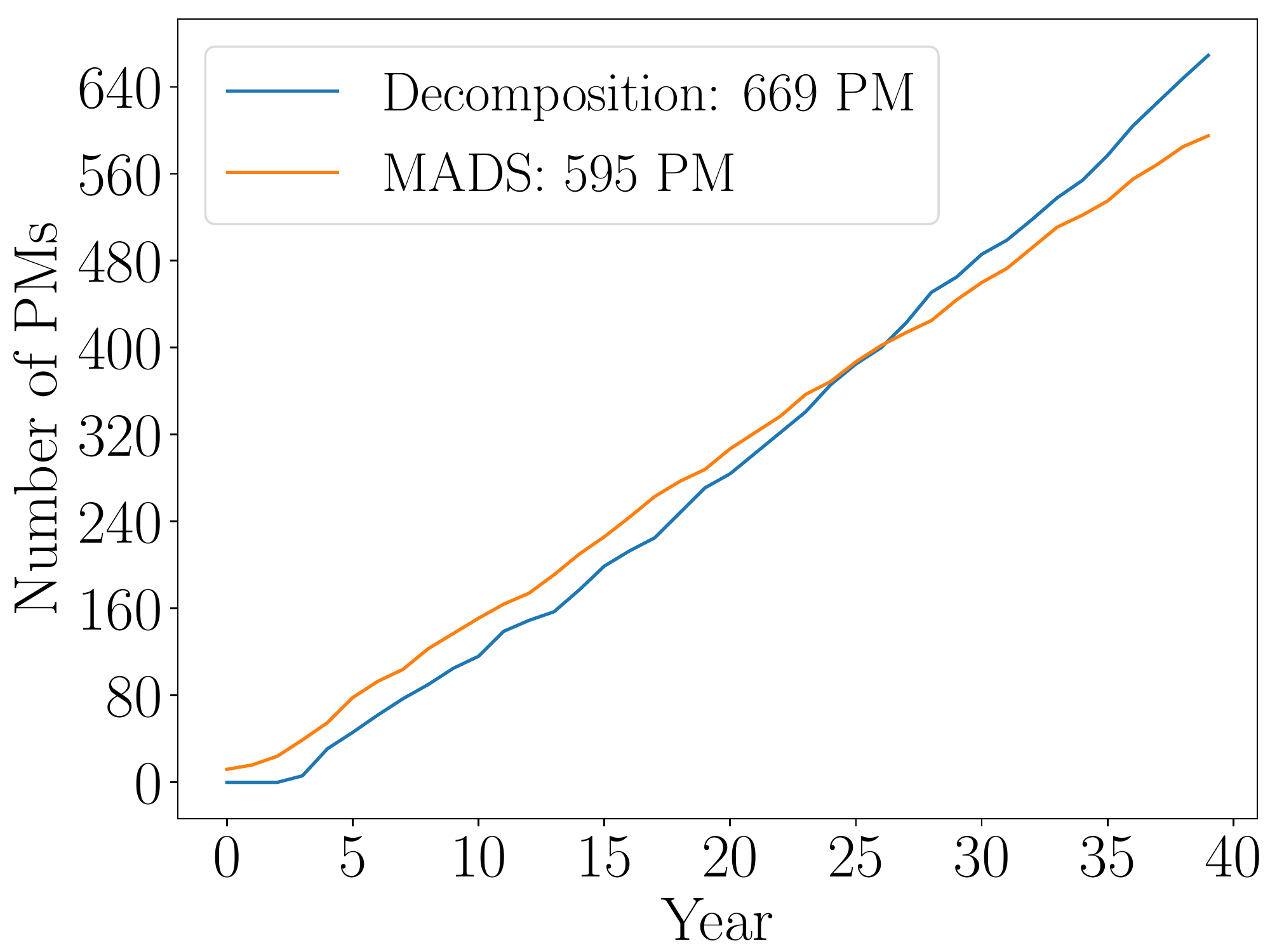}
        \caption{Cumulative number of PMs.}
        \label{fig:pm_cumul}
    \end{minipage}
    \hfill
    \begin{minipage}[t]{.49\textwidth}
        \includegraphics[width=\textwidth]{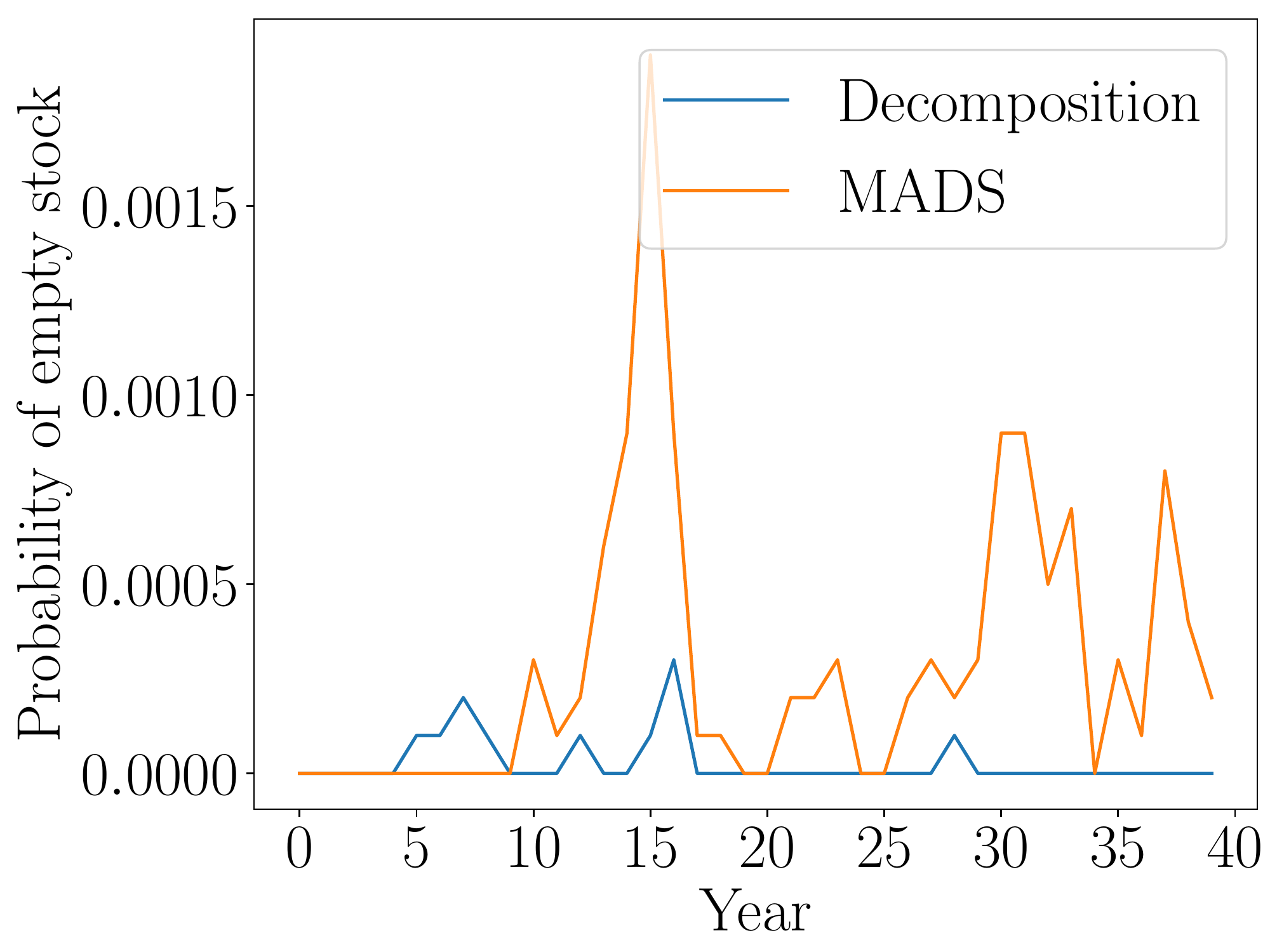}
        \caption{Evolution of the probability of having an empty stock.}
        \label{fig:low_sto}
    \end{minipage}
\end{figure}

Another indicator that is monitored by decision makers is the level of stock. A necessary condition for the occurrence of a forced outage is that the stock is empty. Hence, we look at the probability of having an empty stock. The higher this probability, the higher the probability of forced outage.
The probability of having an empty stock is very low on the whole horizon for the decomposition strategy (Fig.~\ref{fig:low_sto}). For MADS strategy this probability is often higher in particular around year $15$ and after year $30$. This feature again supports that the the decomposition strategy is more robust than MADS strategy.




Overall on this case, the strategy obtained by decomposition is more cost effective than MADS strategy. 
In the case of extreme events, the decomposition strategy is more robust than MADS strategy, as shown by the $99\%$ quantile in~Table~\ref{tab:quant}, the number of failures and of forced outages in Table~\ref{tab:nb_pm} and the probability of having an empty stock in Fig.~\ref{fig:low_sto}.

\subsubsection{Results for Case 2}

In the second case presented in Table~\ref{tab:syst_char}, the components have a longer lifespan but the stock has fewer parts than in the first case. The coupling between the components and the stock is then more important than in Case 1. The mean cost of the maintenance strategy returned by MADS is $10815$ k\euro \ and it is $9749$ k\euro \ for the strategy returned by the decomposition, which represents a gain of $10\%$. The values of some quantiles are gathered in~Table~\ref{tab:quant2} and the distribution of the cost is represented on~Fig.~\ref{fig:comp_distrib_cost2}. The situation is quite different from the first case as the decomposition strategy has a more important variance than MADS strategy and presents several modes.
\begin{table}[htbp]
    \begin{tabularx}{\textwidth}{Xccccccc}
        & $1\%$ & $5\%$ & $25\%$ & $50\%$ & $75\%$ & $95\%$ & $99\%$\\
        \toprule
        Decomposition & $5381$ & $5601$ & $5999$ & $9054$ & $13014$ & $18421$ & $23069$\\
        MADS & $10221$ & $10357$ & $10568$ & $10733$ & $10916$ & $11266$ & $13650$\\
       \bottomrule 
    \end{tabularx}
    \caption{Quantiles of the cost of the two maintenance strategies (k\euro).}
    \label{tab:quant2}
\end{table}
\begin{figure}[htbp]
    \centering
    \begin{minipage}[t]{.49\textwidth}
        \includegraphics[width=\textwidth]{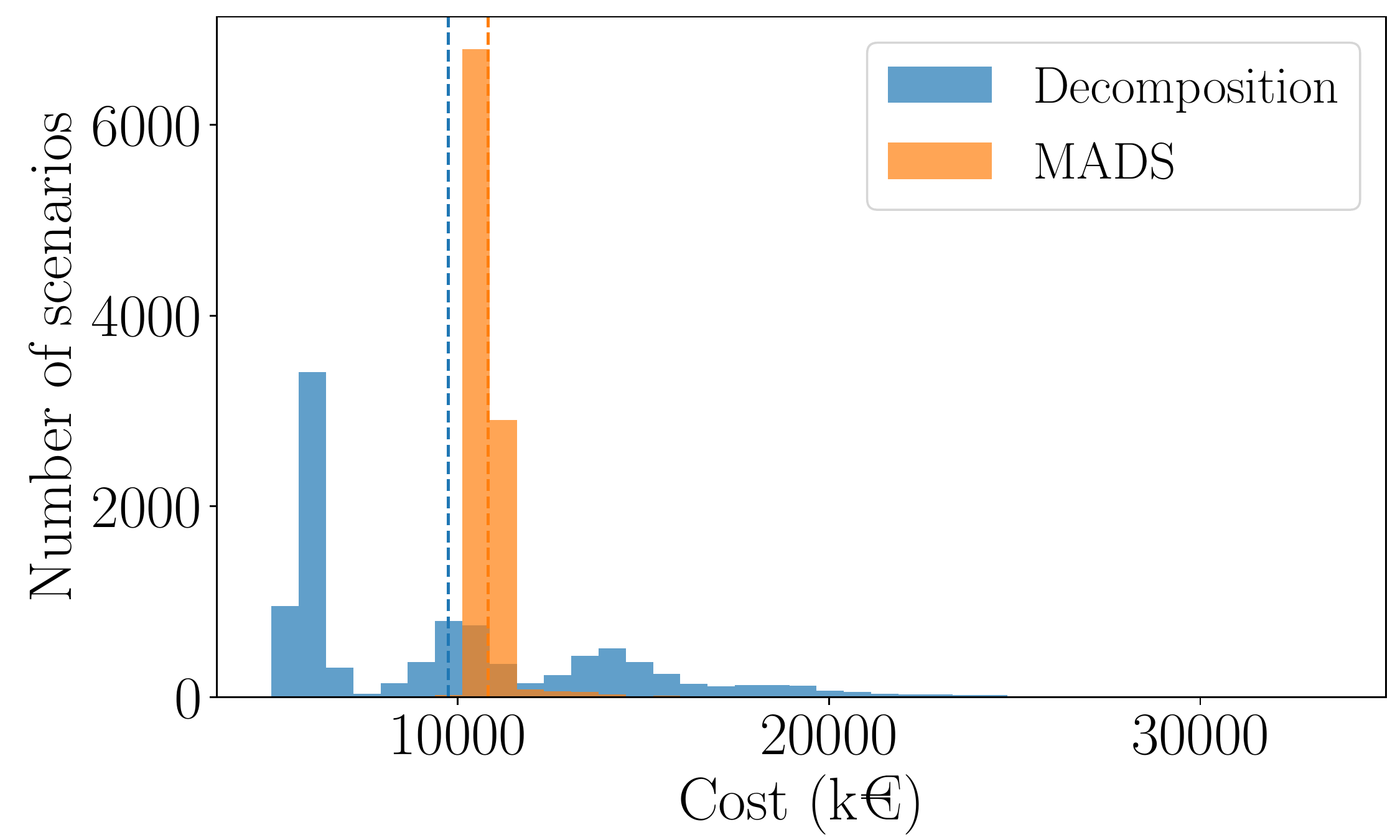}
        \caption{Distribution of the cost for the two maintenance strategies. The dashed lines represent the expected cost for both strategies.}
        \label{fig:comp_distrib_cost2}
    \end{minipage}
    \hfill
    \begin{minipage}[t]{.49\textwidth}
        \includegraphics[width=\textwidth]{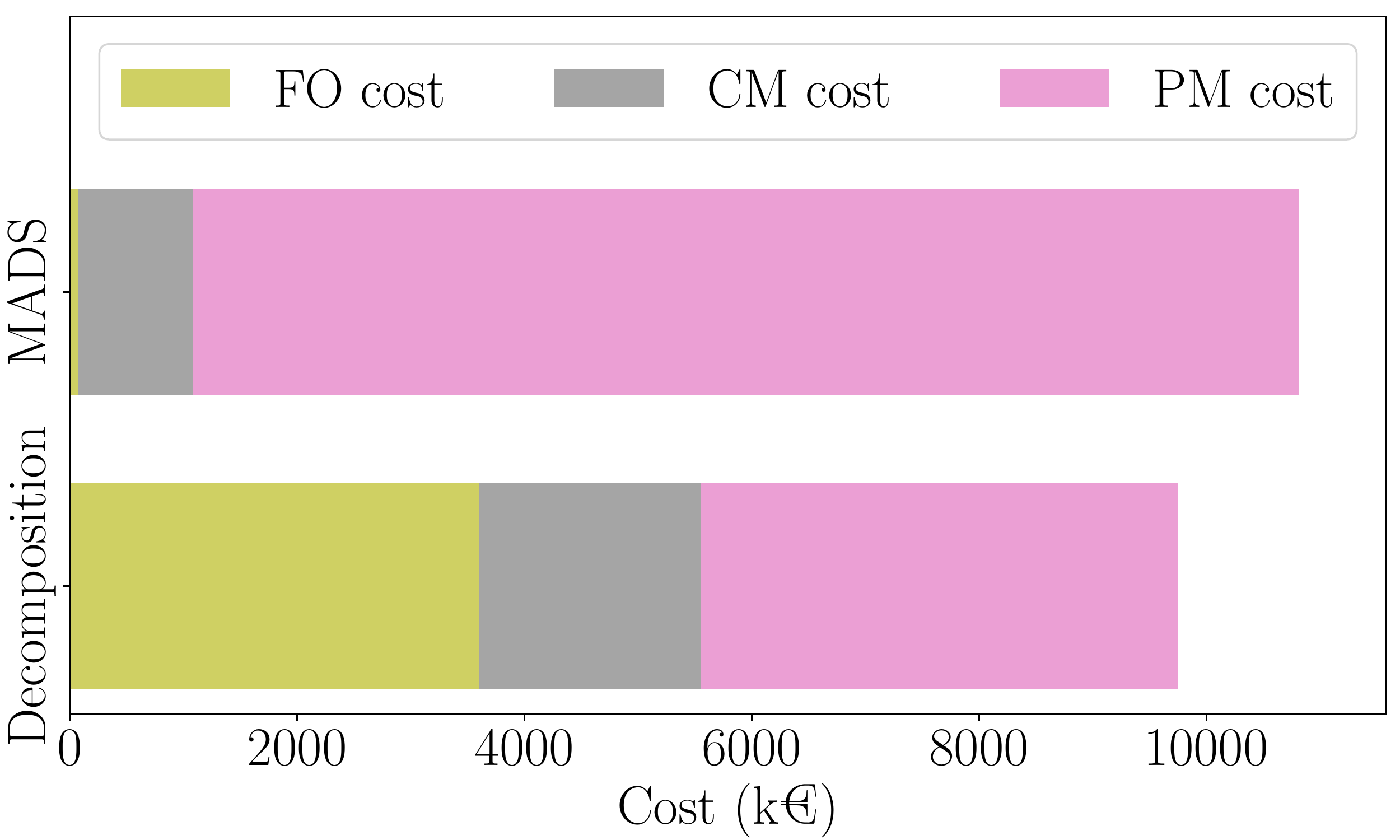}
        \caption{Part of the PM, CM and forced outage cost in the total expected cost.}
        \label{fig:repart_cost2}
    \end{minipage}
\end{figure}

The repartition of the cost in Fig.~\ref{fig:repart_cost2} confirms that the two maintenance strategies have a different approach. The strategy returned by MADS is similar than in the first case with a high number of PMs, few failures and forced outages, see Table~\ref{tab:nb_pm2}. On the other hand, with the decomposition strategy the PM cost is much lower, as fewer PMs are performed (Table~\ref{tab:nb_pm2}). The counterpart is a higher CM cost and a notable contribution of the forced outage cost. We see in Table~\ref{tab:nb_pm2} that the number of forced outages is indeed significantly higher with the decomposition strategy, with almost one forced outage per scenario in average. Thus, the approaches of the two maintenance strategies are completely different: the MADS strategy is conservative with a lot of PMs (similarly as in the first case) whereas the decomposition strategy is more risky with fewer PMs and more forced outages.

\begin{table}[htbp]
    \begin{tabularx}{\textwidth}{l>{\centering\arraybackslash}c>{\centering\arraybackslash}X}
        & Decomposition & MADS\\
        \toprule
        Total number of PMs & $503$ & $644$\\
        Mean number of PMs / component & $6.3$ & $8.1$\\
        Mean number of failures / component & $0.38$ & $0.26$\\
        Number of forced outages / scenario  & $0.94$ & $0.04$\\
        Computation time & $5$ days, $12$ hours & $3$ days, $12$ hours\\
        \bottomrule
    \end{tabularx}
    \caption{Number of PMs, failures and forced outages, and computation time for each strategy.}
    \label{tab:nb_pm2}
\end{table}

The cumulative number of PMs can be visualized on~Fig.~\ref{fig:pm_cumul2}. 
Similarly as in the first case, we see a time-dependent behavior of the decomposition strategy with almost no maintenance in the first seven years. In Case 1, the decomposition strategy does not perform PMs in the first three years only but the components have a shorter average lifespan (around $9$ years in Case 1 and $18$ years in Case 2). In the first half of the time horizon, the decomposition strategy performs fewer PMs than MADS strategy, whereas in the second half of the horizon the decomposition performs slightly more PMs than MADS. We see in Fig.~\ref{fig:low_sto2}, that the decomposition strategy results in a high probability of having an empty stock between years $10$ and $15$, where most of the forced outages occur. In the first years and in the second half of the horizon, both strategies present a low risk of having an empty stock.

In Table~\ref{tab:nb_pm2}, we see that the computation time are around 3 times larger than in Case 1. This is due to the fact that we have used $Q = 300$ scenarios here against $Q = 100$ scenarios in Case 1 for the estimation of the expected cost (see Table~\ref{tab:syst_char}). As the decomposition designs a risky strategy, if $Q$ is kept too low, the maintenance strategy may not perform well when evaluated on the $10^{5}$ validation scenarios as extreme events are not taken into account during the optimization.

\begin{figure}[htbp]
    \centering
    \begin{minipage}[t]{.49\textwidth}
        \includegraphics[width = \textwidth]{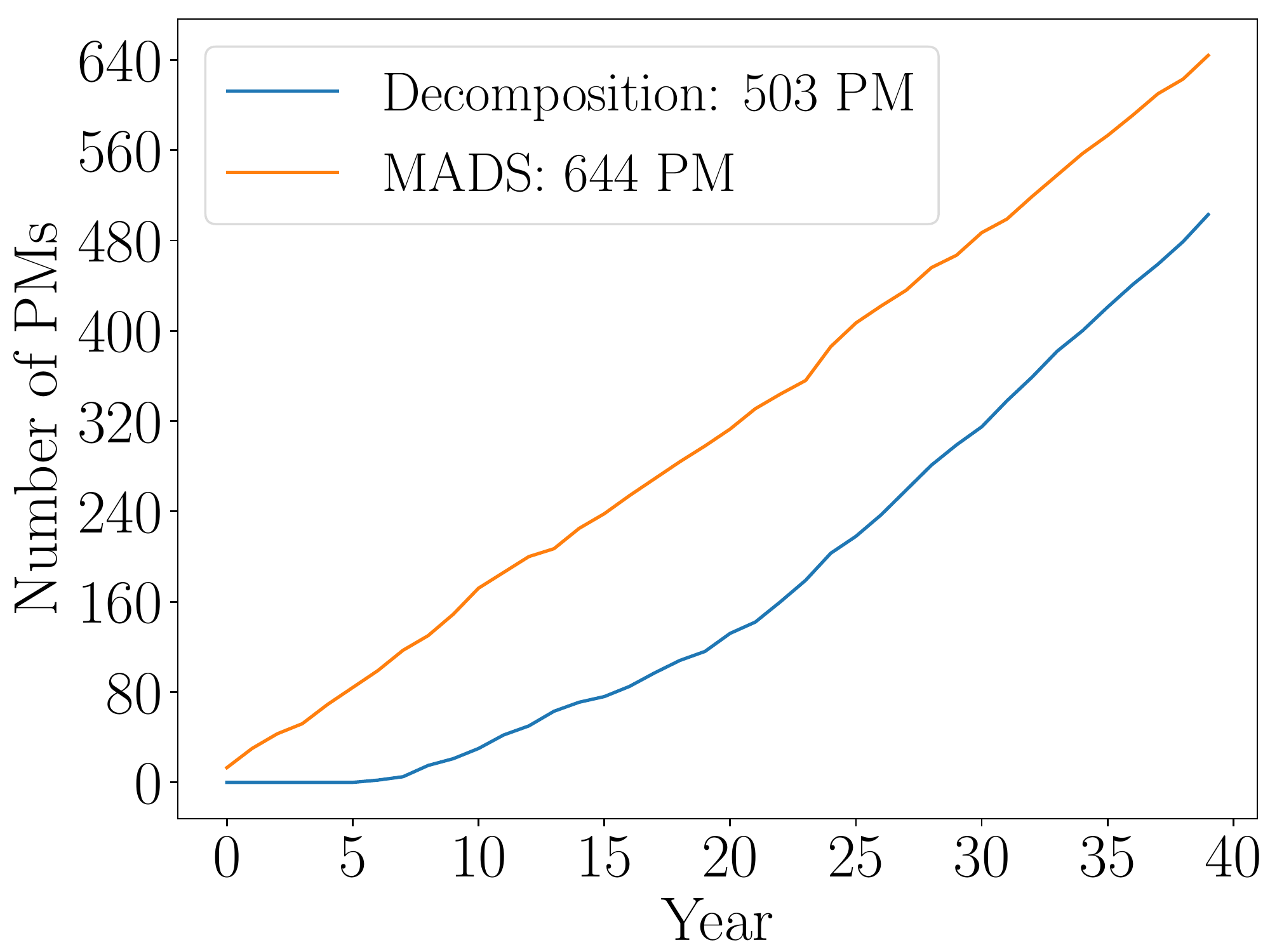}
        \caption{Cumulative number of PMs.}
        \label{fig:pm_cumul2}
    \end{minipage}
    \hfill
    \begin{minipage}[t]{.49\textwidth}
        \includegraphics[width=\textwidth]{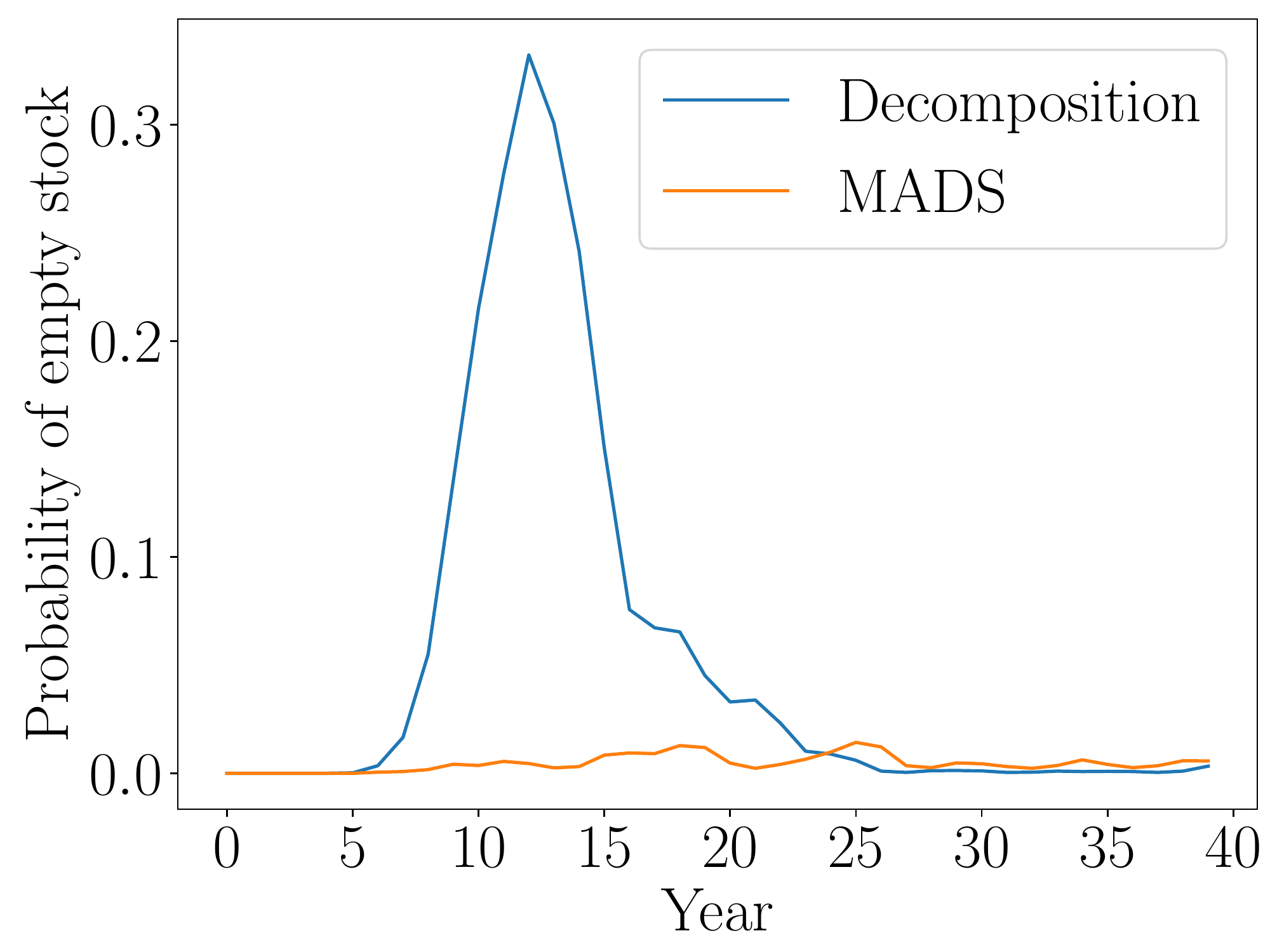}
        \caption{Evolution of the probability of having an empty stock.}
        \label{fig:low_sto2}
    \end{minipage}
\end{figure}

Overall for this case, the decomposition strategy performs better in expectation, which is the criterion that the problem has been designed to optimize. However, the decomposition strategy has a higher variance than MADS strategy with an important risk of forced outage. Thus, from an operational point of view, a decision maker will probably choose the MADS strategy. Our work is meant to be a decision support tool and is not design to take the final operational decision. The analysis of Fig.~\ref{fig:pm_cumul2} and~\ref{fig:low_sto2} suggest to investigate a maintenance strategy that is similar to the decomposition strategy in the first years and in the second half of the horizon but with more PMs between years $10$ and $15$ to reduce the number of forced outages. We have the insight that this new strategy could improve the robustness of the decomposition strategy and perform similarly or even better in expectation.



\section{Conclusion}
\label{sec:ccl}

In this work we study a maintenance scheduling optimization problem for hydropower plants management. 
We set up a decomposition method to find a deterministic preventive maintenance strategy for a system of physical components sharing a common stock of spare parts. The decomposition relies on the Auxiliary Problem Principle. We construct a sequence of auxiliary problems that are solved iteratively. 
The auxiliary problems are decomposable into independent subproblems of smaller dimension that are solved in parallel. Each subproblem involves only one component of the system or the stock. A relaxation of the system is necessary as the gradients of the dynamics and the cost are required to compute the coordination terms. Then, the fixed-point algorithm is implemented with an efficient mixed parallel/sequential strategy. This implementation is particularly adapted to the structure of the industrial problem, for which the subproblem on the stock is easy to solve numerically.

On the two industrial test cases, the decomposition method outperforms in expectation the blackbox algorithm MADS applied directly on the full problem. The reason is that when MADS is used to solve subproblems of small dimension within the decomposition algorithm, it manages to detect features to design an efficient time-dependent PM strategy. These features -- such as not replacing components in the first years of the horizon -- cannot be detected when MADS is applied directly on the full problem as the corresponding strategies only represent a very small subset of the search space which is not explored by the algorithm.

The approach of the decomposition strategy differs between the two test cases. In the first case the strategy is conservative and robust to extreme events whereas in the second test case, it is more risky as more forced outages can occur. Our work is meant to be a decision support tool and even if it is unlikely that a decision maker chooses a risky maintenance strategy, the results on the second case can give insights for the design of an efficient robust maintenance strategy.

This work proves the interest of the modeling effort needed to apply the decomposition method. Some challenges still remain for an application in an operational context. In order to force the design of a robust maintenance strategy, some risk criteria should be added in the mathematical formulation of the problem, either in the cost function or in the constraint. Another point that must be noticed is that the dynamics is simulated with a time step of one year. A smaller time step must be used for an accurate evaluation of the costs. This will not increase the complexity of the problem as maintenance decisions are always made on a yearly basis, so that the space of admissible maintenance strategies is still of the same dimension. However, the time needed for the evaluation of the cost function will increase. 
It is also possible to model more complex systems, by adding a control on the time of the order of spare parts or dependence between the failures of the components for instance. We could also consider imperfect preventive maintenance. A balance must be found between the simplicity of the model and its adequation to reality given the industrial application in mind.


\appendix


\section{Explicit expression of the dynamics of the original system}
\label{sec:rew_syst_dyn}
In this part, we give an explicit expression for the dynamics $f_{i}$ of component $i \in \components$ that appears in~\textup {(\ref{eq:dyn_comp_i})}. We can write:
\begin{equation}
    \statecomptime{i}{t+1} =
    \begin{pmatrix}
        \reg{i}{t+1}\\
        \age{i}{t+1}\\
        \spr{i}{t+1}
    \end{pmatrix}
    =
    \begin{pmatrix}
        f_{i,\va{E}}(\statecomptime{1:i}{t}, \sto{t}, \ctrlcomptime{i}{t}, \noise{i}{t+1})\\
        f_{i,\va{A}}(\statecomptime{1:i}{t}, \sto{t}, \ctrlcomptime{i}{t}, \noise{i}{t+1})\\
        f_{i,\va{P}}(\statecomptime{1:i}{t}, \sto{t}, \ctrlcomptime{i}{t}, \noise{i}{t+1})\\
    \end{pmatrix} \eqfinv
\end{equation}
so that $f_{i} = (f_{i,\va{E}}, f_{i,\va{A}}, f_{i,\va{P}})$. We give an explicit formula for $f_{i,\va{E}}, f_{i,\va{A}}$ and $f_{i,\va{P}}$.

\subsection{Dynamics of the regime \texorpdfstring{$\boldsymbol{E}_{i,t}$}{E}}

Using~Fig.~\ref{fig:comp_dyn}, we can write:
\begin{equation}
    \begin{aligned}
        \reg{i}{t+1} &= f_{i,\va{E}}(\statecomptime{1:i}{t}, \sto{t}, \ctrlcomptime{i}{t}, \noise{i}{t+1})\\
        &=\begin{multlined}[t]
            \indic{\bbRp}{\sto{t} - \sum_{j=1}^{i}\indic{\na{0}}{\reg{j}{t}}}\indic{\na{0}}{\reg{i}{t}} \\
            + \big(\indic{\bbRp}{\ctrlcomptime{i}{t} - \nu} + \indic{\bbRp}{\noise{i}{t+1} - p_{i}(\age{i}{t})}\indic{\bbRpe}{\nu - \ctrlcomptime{i}{t}} \big) \indic{\na{1}}{\reg{i}{t}}\eqfinp
        \end{multlined}
    \end{aligned}
    \label{eq:phys_dyn}
\end{equation}

The first part of~\textup {(\ref{eq:phys_dyn})} means that if the component is broken at $t$ and we have enough spares to repair it, it is then functioning at $t+1$. The second part means that if the component is functioning at $t$ and we do a PM, it is still functioning at $t+1$. Finally, if we do not do a PM, the regime depends on the occurrence of a failure between $t$ and $t+1$.

\subsection{Dynamics of the age \texorpdfstring{$\boldsymbol{A}_{i,t}$}{A}}

Again using~Fig.~\ref{fig:comp_dyn}, we can write:
\begin{equation}
    \begin{aligned}
        \age{i}{t+1} &= f_{i,\va{A}}(\statecomptime{1:i}{t}, \sto{t}, \ctrlcomptime{i}{t}, \noise{i}{t+1})\\
        &=\begin{multlined}[t]
            (\age{i}{t} + 1)\bigg( \indic{\bbRpe}{\sum_{j=1}^{i}\indic{\na{0}}{\reg{j}{t}} - \sto{t}}\indic{\na{0}}{\reg{i}{t}}\\
            + \indic{\bbRp}{\noise{i}{t+1} - p_{i}(\age{i}{t})}\indic{\bbRpe}{\nu - \ctrlcomptime{i}{t}} \indic{\na{1}}{\reg{i}{t}}\bigg)\\
            + \indic{\bbRp}{\sto{t} - \sum_{j=1}^{i}\indic{\na{0}}{\reg{j}{t}}}\indic{\na{0}}{\reg{i}{t}} + \left((1-\ctrlcomptime{i}{t})\age{i}{t} + 1\right)\indic{\bbRp}{\ctrlcomptime{i}{t} - \nu}\indic{\na{1}}{\reg{i}{t}}\eqfinp
        \end{multlined}
    \end{aligned} 
    \label{eq:age_dyn}
\end{equation}


If the component is broken at $t$, it stays broken if there are not enough spares in the stock (first line of~\eqref{eq:age_dyn}). In this case the time $\age{i}{t}$ increases by $1$. If the component is healthy at $t$, it ages if no PM is done and no failure occurs (second line of~\eqref{eq:age_dyn}). When the component is broken and that there are enough spares (first term in the third line of~\eqref{eq:age_dyn}), a CM is performed and we have $\age{i}{t+1} = 1$. In the case of a PM (second term in the third line of~\eqref{eq:age_dyn}), the component is rejuvenated and its age becomes $\age{i}{t+1} = (1-\ctrlcomptime{i}{t})\age{i}{t} + 1$. If there is a failure, we have $\age{i}{t+1} = 0$.

\subsection{Dynamics of the vector of times since last failures \texorpdfstring{$\boldsymbol{P}_{i,t}$}{P}}

The expression of the dynamics of $\spr{i}{t} = (\spr{i}{t}^{1}, \ldots, \spr{i}{t}^{\delay})$ is more complex. We write:
\begin{align}
    \spr{i}{t+1} &=
    \begin{pmatrix}
        \spr{i}{t+1}^{1}\\
        \vdots\\
        \spr{i}{t+1}^{\delay}
    \end{pmatrix}
    =
    \begin{pmatrix}
        f_{i,\va{P}}^{1}(\statecomptime{1:i}{t}, \sto{t}, \ctrlcomptime{i}{t}, \noise{i}{t+1})\\
        \vdots\\
        f_{i,\va{P}}^{\delay}(\statecomptime{1:i}{t}, \sto{t}, \ctrlcomptime{i}{t}, \noise{i}{t+1})
    \end{pmatrix}
    = f_{i,\va{P}}(\statecomptime{1:i}{t}, \sto{t}, \ctrlcomptime{i}{t}, \noise{i}{t+1})\eqfinv \\
    \intertext{so that $f_{i,\va{P}} = (f_{i,\va{P}}^{1}, \ldots, f_{i,\va{P}}^{\delay})$. We give the expression of $f_{i,\va{P}}^{d}$ for $d \in \{ 1, \ldots, \delay \}$:}
    \spr{i}{t+1}^{d} &=
    \begin{multlined}[t]
        \Big((\spr{i}{t}^{d} + 1) \indic{\bbRp}{\spr{i}{t}^{d}} + \sprnofail \indic{\na{\sprnofail}}{\spr{i}{t}^{d}} \Big)\Big(1- \indic{\na{1}}{\reg{i}{t}} \indic{\na{0}}{\reg{i}{t+1}}\Big)\\
        + \Big((\spr{i}{t}^{d} + 1)\indic{\bbRp}{\spr{i}{t}^{d}}\indic{\na{\sprnofail}}{\spr{i}{t}^{\delay}} + \sprnofail \indic{\na{\sprnofail}}{\spr{i}{t}^{d-1}}\indic{[2, \delay]}{d}\\
        + (\spr{i}{t}^{d+1} + 1)\indic{\bbRp}{\spr{i}{t}^{\delay}}\indic{[1, \delay-1]}{d} \Big) \indic{\na{1}}{\reg{i}{t}} \indic{\na{0}}{\reg{i}{t+1}}\eqfinp
    \end{multlined}
    \label{eq:fip}
\end{align}
The first line represents the case where there is no failure. Then $\spr{i}{t}$ increases by one if it is different from $\sprnofail$, otherwise it keeps the value $\sprnofail$. When there is a failure, if component $i$ has undergone fewer than $\delay$ failures, the evolution of $\spr{i}{t}$ is described by~\textup {(\ref{eq:nb_fail_fail})}. This case is represented by the second line of~\textup {(\ref{eq:fip})}. When the component has already undergone $\delay$ failures, the evolution of $\spr{i}{t}$ is described by~\textup {(\ref{eq:nb_fail_leqk_fail})}. This case is represented by the third line of~\textup {(\ref{eq:fip})}. Note that this expression of $\spr{i}{t+1}^{d}$ depends on $\reg{i}{t+1}$. It is possible to express $\spr{i}{t+1}^{d}$ only with variables describing component $i$ at time $t$, this can be done by replacing $\reg{i}{t+1}$ by its expression~\textup {(\ref{eq:phys_dyn})}.


\subsection{Dynamics of the stock \texorpdfstring{$\boldsymbol{S}_{t}$}{S}}

We recall the explicit dynamics of the stock that is already given in~\textup {(\ref{eq:stock})}:

\begin{equation}
    \sto{t+1} = \sto{t} + \sum_{i=1}^{n} \sum_{d=1}^{\delay} \indic{\na{\delay-1}}{\spr{i}{t}^{d}} - \min\left\{\sto{t}, \ \sum_{i=1}^{n} \indic{\na{0}}{\reg{i}{t}} \right\}\eqfinp
    \label{eq:stock_appendix}
\end{equation}
The dynamics of the whole system has now been explicitly described.

\section{Explicit expression of the dynamics of the relaxed system}
\label{sec:rel_dyn}
The expression of the relaxed dynamics of parameter $\alpha$ is obtained from Equations~\textup {(\ref{eq:phys_dyn})}, \textup {(\ref{eq:age_dyn})}, \textup {(\ref{eq:fip})} and~\textup {(\ref{eq:stock_appendix})} by replacing the indicator function with its relaxed version.

We do not always substitute directly the indicator with its relaxation. The dynamics often involves conditions on complementary events. For example, the condition \textit{if the component is broken} is represented by $\indic{\na{0}}{\reg{i}{t}}$. On the other hand, the condition \textit{if the component is healthy} is represented by $\indic{\na{1}}{\reg{i}{t}}$. For the original dynamics as $\reg{i}{t} \in \{0,1\}$ we always have
\begin{equation}
    \indic{\na{0}}{\reg{i}{t}} + \indic{\na{1}}{\reg{i}{t}} = 1\eqfinp
\end{equation}
This relation is not true anymore using directly the relaxed version of the indicator with the relaxed variables. Take for example $\alpha = 2$, and suppose $\reg{i}{t} = \frac{1}{2}$, then
\begin{equation}
    \indicrel{\na{0}}{\reg{i}{t}} + \indicrel{\na{1}}{\reg{i}{t}} = 0\eqfinp
\end{equation}
If we replace directly all indicator functions by their relaxation, the consequence would be in this case that $\reg{i}{t+1} = 0$ no matter the control $\ctrlcomptime{i}{t}$. This means that even if we do a PM with $\ctrlcomptime{i}{t} = 1$, the component is down at $t+1$. This does not represent the dynamics of the system as we would expect. To design a coherent relaxed dynamics, complementary conditions $\findi{\mathcal{A}}$ and $\findi{\mathcal{A}\compl}$ are represented  using the relaxed version $\findi{\mathcal{A}}\rel$ of the indicator function for the first condition and the function $1 - \findi{\mathcal{A}}\rel$ for the complementary condition.

\subsection{Relaxed dynamics of the regime \texorpdfstring{$\boldsymbol{E}_{i,t}$}{E}}

The relaxed dynamics of the regime of parameter $\alpha > 0$ is given by:
\begin{equation}
    \begin{aligned}
        \reg{i}{t+1} &= f_{i,\va{E}}\rel(\statecomptime{1:i}{t}, \sto{t}, \ctrlcomptime{i}{t}, \noise{i}{t+1})\\
        &=
        \begin{multlined}[t]
            \indicrel{\bbRp}{\sto{t} - \sum_{j=1}^{i}\indicrel{\na{0}}{\reg{j}{t}}}\indicrel{\na{0}}{\reg{i}{t}}
            + \Big(\indicrel{\bbRp}{\ctrlcomptime{i}{t} - \nu}\\ 
            + \indicrel{\bbRp}{\noise{i}{t+1} - p_{i}(\age{i}{t})}(1-\indicrel{\bbRp}{\ctrlcomptime{i}{t} - \nu}) \Big) \left(1-\indicrel{\na{0}}{\reg{i}{t}}\right)\eqfinp
        \end{multlined}
    \end{aligned}
\end{equation}
We use the relaxed version of the indicator for $\indic{\na{0}}{\reg{i}{t}}$ and $\indic{\bbRp}{\ctrlcomptime{i}{t} - \nu}$. We relax $\indic{\na{1}}{\reg{i}{t}}$ and $\indic{\bbRpe}{\nu - \ctrlcomptime{i}{t}}$ as $1-\indicrel{\na{0}}{\reg{i}{t}}$ and $1-\indicrel{\bbRp}{\ctrlcomptime{i}{t} - \nu}$ respectively.

\subsection{Relaxed dynamics of the age \texorpdfstring{$\boldsymbol{A}_{i,t}$}{A}}

The relaxed dynamics of the age of parameter $\alpha > 0$ is given by:
\begin{equation}
    \begin{aligned}
        \age{i}{t+1} &= f_{i,\va{A}}\rel(\statecomptime{1:i}{t}, \sto{t}, \ctrlcomptime{i}{t}, \noise{i}{t+1})\\
        &=
        \begin{multlined}[t]
            (\age{i}{t} + 1)\bigg[ 
                \indicrel{\bbRpe}{\sum_{j=1}^{i}\indicrel{\na{0}}{\reg{j}{t}} - \sto{t}}\indicrel{\na{0}}{\reg{i}{t}} \\
                + \indicrel{\bbRp}{\noise{i}{t+1} - p_{i}(\age{i}{t})}(1-\indicrel{\bbRp}{\ctrlcomptime{i}{t} - \nu}) \big(1-\indicrel{\na{0}}{\reg{i}{t}}\big)
                \bigg]\\
            + \big(1-\indicrel{\bbRpe}{\sum_{j=1}^{i}\indicrel{\na{0}}{\reg{j}{t}} - \sto{t}}\big)\indicrel{\na{0}}{\reg{i}{t}}\\
            + \left((1-\ctrlcomptime{i}{t})\age{i}{t} + 1\right)\indicrel{\bbRp}{\ctrlcomptime{i}{t} - \nu}\big(1-\indicrel{\na{0}}{\reg{i}{t}}\big)\eqfinp
        \end{multlined}
    \end{aligned}
\end{equation}

\subsection{Relaxed dynamics of the vector of last failures \texorpdfstring{$\boldsymbol{P}_{i,t}$}{P}}

The relaxed dynamics of parameter $\alpha > 0$ of the $d$-th element of the vector of last failures is given by:
\begin{equation}
    \begin{aligned}
        \spr{i}{t+1}^{d} &= f_{i,\va{P}}^{d, \alpha}(\statecomptime{1:i}{t}, \sto{t}, \ctrlcomptime{i}{t}, \noise{i}{t+1})\\
        &=
        \begin{multlined}[t]
            \Big((\spr{i}{t}^{d} + 1) (1-\indicrel{\na{\sprnofail}}{\spr{i}{t}^{d}}) + \sprnofail\indicrel{\na{\sprnofail}}{\spr{i}{t}^{d}} \Big)\Big(1- \indicrel{\na{1}}{\reg{i}{t}} \indicrel{\na{0}}{\reg{i}{t+1}}\Big)\\
            + \Big((\spr{i}{t}^{d} + 1)(1-\indicrel{\na{\sprnofail}}{\spr{i}{t}^{d}})\indicrel{\na{\sprnofail}}{\spr{i}{t}^{\delay}} + \sprnofail \indicrel{\na{\sprnofail}}{\spr{i}{t}^{d-1}}\indic{[2, \delay]}{d}\\
            + (\spr{i}{t}^{d+1} + 1)(1-\indicrel{\na{\sprnofail}}{\spr{i}{t}^{\delay}})\indic{[1, \delay-1]}{d} \Big) \indicrel{\na{1}}{\reg{i}{t}} \indicrel{\na{0}}{\reg{i}{t+1}}\eqfinp
        \end{multlined}
    \end{aligned}
    \label{eq:relax_dyn_spr}
\end{equation}

We do not relax $\indic{[2, D]}{d}$ and $\indic{[1, \delay-1]}{d}$. The reason is that these indicator functions do not arise from a discontinuity in the original dynamics. They are just used to take into account in the same equation the cases of $\spr{i}{t}^{1}$ and $\spr{i}{t}^{\delay}$ that have a slightly different expression than $\spr{i}{t}^{d}$ for $1 < d < \delay$.

\subsection{Relaxed dynamics of the stock \texorpdfstring{$\boldsymbol{S}_{t}$}{S}}

The relaxed dynamics of the stock of parameter $\alpha > 0$ is given by:
\begin{equation}
    \sto{t+1} = \sto{t} + \sum_{i=1}^{n} \sum_{d=1}^{\delay} \indicrel{\na{\delay-1}}{\spr{i}{t}^{d}} - \min\left\{\sto{t}, \ \sum_{i=1}^{n} \indicrel{\na{0}}{\reg{i}{t}} \right\}\eqfinp
\end{equation}

\section{Computation of optimal multipliers}
\label{sec:opt_mult}
At iteration $k$ of the APP fixed-point algorithm, the subproblem on component $i \in \components$ is solved with the blackbox algorithm MADS~\cite{audet_mesh_2006}. MADS directly solves the constrained problem and outputs a primal solution $(\statecomponent{i}^{k+1}, \ctrlcomp{i}^{k+1})$. Finding the primal solution $\stoalltime^{k+1}$ of the subproblem on the stock just requires a simulation of the dynamics. For each subproblem we also have to compute optimal multipliers $\multcomp{1}^{k+1}, \ldots, \multcomp{n}^{k+1}, \multcomp{\stoalltime}^{k+1}$ to update the coordination term at the end of each iteration. 

Suppose that the optimal solution and optimal multiplier of the auxiliary problem~\textup {(\ref{eq:aux_pb_indus})} are uniquely defined. 
As we know the primal solution, we can compute the optimal multiplier using the stationarity of the Lagrangian. In the following computation, we use the relaxed cost and dynamics to be able to compute the different gradients that appear, however for the sake of readability we drop the superscript $\alpha$. 

The Lagrangian $\lag$ of the auxiliary problem~\textup {(\ref{eq:aux_pb_indus})} is given by:
\begin{align}
    \lag(\stateallcomp, \stoalltime, \ctrlallcomp, \multallcomp) = &
    \begin{multlined}[t]
        \ \espe \Big(\sum_{i=1}^{n} \left(j_{i}(\statecomponent{i}, \ctrlcomp{i}) + \costfo(\statecompbar{1:i-1}, \statecomponent{i}, \statecompbar{i+1:n})\right)\\
        + \frac{\gamma_{x}}{2} \sqnorm{ \stateallcomp - \stateallcompbar } + \frac{\gamma_{s}}{2} \sqnorm{ \stoalltime - \stoalltimebar } + \frac{\gamma_{u}}{2} \sqnorm{\ctrlallcomp- \ctrlallcompbar }\\
        \ + \proscal{ \multallcompbar}{ (\dyn'(\prevopt, \noiseallcomp) - \auxdyn'(\prevopt, \noiseallcomp))\cdot (\stateallcomp, \stoalltime, \ctrlallcomp) }\\ 
        +\proscal{ \multallcomp}{ \auxdyn(\stateallcomp, \stoalltime, \ctrlallcomp, \noiseallcomp) } \Big)\eqfinp
    \end{multlined}
\end{align}
At the saddle point $(\stateallcompopt, \stoalltimeopt, \ctrlallcompopt, \multallcompopt)$ of $\lag$ we have: 
\begin{equation}
    \nabla \lag(\stateallcompopt, \stoalltimeopt, \ctrlallcompopt, \multallcompopt) = 0\eqfinp
    \label{eq:lag_opt}
\end{equation}
Recall that for $i \in \{1, \ldots, n, \stoalltime\}$, we have:
\begin{equation}
    \multcomp{i}^{k+1} = (\multcomptime{i}{0}^{k+1}, \ldots, \multcomptime{i}{T}^{k+1})\eqfinp
\end{equation}
Using~\textup {(\ref{eq:lag_opt})} and knowing the solution $(\stateallcompopt, \stoalltimeopt, \ctrlallcompopt)$ of the auxiliary problem, we can update the multiplier $\multallcompopt$ with a backward recursion.


\begin{proposition}
    Let $i \in \components$. For the dynamics of component $i$ in the auxiliary problem~\textup {(\ref{eq:aux_pb_indus})}, the optimal multiplier $\multcompopt{i} = (\multcomptimeopt{i}{0}, \ldots, \multcomptimeopt{i}{T})$ can be computed with the following backward recursion for $t\in \timesteps$:
    \begin{equation}
        \begin{aligned}
            \multcomptimeopt{i}{T} &=
            \begin{multlined}[t]
                - \nabla_{\statecomptime{i}{T}} j_{i,T}(\statecomptimeopt{i}{T}, \ctrlcomptimeopt{i}{T}) - \nabla_{\statecomptime{i}{T}}\costfo_{T}(\statecompbar{1:i-1,T}, \statecomptimeopt{i}{T}, \statecompbar{i+1:n,T})\\
                - \gamma_{x} (\statecomptimeopt{i}{T} - \statecompbar{i,T})\eqfinv
            \end{multlined}\\
            \multcomptimeopt{i}{t} &=
            \begin{multlined}[t]
                - \nabla_{\statecomptime{i}{t}} j_{i,t}(\statecomptimeopt{i}{t}, \ctrlcomptimeopt{i}{t}) - \nabla_{\statecomptime{i}{t}}\costfo_{t}(\statecompbar{1:i-1,t}, \statecomptimeopt{i}{t}, \statecompbar{i+1:n,t})\\ 
                - \gamma_{x} (\statecomptimeopt{i}{t} - \statecompbar{i,t}) - \sum_{j=i+1}^{n} \partial_{\statecomptime{i}{t}} \dyncomptime{j}{t+1}(\statecompbar{1:j}, \stoalltimebar, \ctrlcompbar{j}, \noisecomp{j})\transpos \cdot \multcomptimebar{j}{t+1} \\ 
                - \partial_{\statecomptime{i}{t}} \dyncomptime{\stoalltime}{t+1}(\statecompbar{1:n}, \stoalltimebar)\transpos \cdot \multcomptimebar{\stoalltime}{t+1}
                - \partial_{\statecomptime{i}{t}} \auxdyncomptime{i}{t+1}(\statecompopt{i}, \ctrlcompopt{i}, \noisecomp{i})\transpos \cdot\multcomptimeopt{i}{t+1}\eqfinp
            \end{multlined}
        \end{aligned}
        \label{eq:back_comp}
    \end{equation}
    \label{prop:back_comp}
\end{proposition}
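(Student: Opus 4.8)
The plan is to read the recursion off the first-order stationarity condition \eqref{eq:lag_opt}, namely $\nabla\lag(\stateallcompopt,\stoalltimeopt,\ctrlallcompopt,\multallcompopt)=0$, by projecting the gradient of the Lagrangian $\lag$ of the auxiliary problem~\eqref{eq:aux_pb_indus} onto each coordinate $\statecomptime{i}{t}$, for fixed $i\in\components$ and $t\in\timesteps$. Since $\lag$ is an expectation of a pathwise Lagrangian and the constraint $\auxdyn=0$ holds almost surely, I would work scenario by scenario and impose $\partial_{\statecomptime{i}{t}}\lag=0$, which splits into the separate contributions of (i) the running cost $j_i$, (ii) the forced-outage cost in its canonical form $\costfo(\statecompbar{1:i-1},\statecomponent{i},\statecompbar{i+1:n})$, (iii) the proximal term $\frac{\gamma_x}{2}\sqnorm{\stateallcomp-\stateallcompbar}$, (iv) the linear coordination term $\proscal{\multallcompbar}{(\dyn'(\prevopt,\noiseallcomp)-\auxdyn'(\prevopt,\noiseallcomp))\cdot(\stateallcomp,\stoalltime,\ctrlallcomp)}$, and (v) the constraint-coupling term $\proscal{\multallcomp}{\auxdyn(\stateallcomp,\stoalltime,\ctrlallcomp,\noiseallcomp)}$. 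The first three contributions immediately produce $\nabla_{\statecomptime{i}{t}} j_{i,t}$, $\nabla_{\statecomptime{i}{t}}\costfo_t$ and $\gamma_x(\statecomptimeopt{i}{t}-\statecompbar{i,t})$, which are exactly the first three terms (with a sign change) of~\eqref{eq:back_comp}.

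For term (v) I would use that $\auxdyn$ is block diagonal by construction \eqref{eq:aux_dyn}: only the $i$-th block $\auxdyncomp{i}$ depends on $\statecomponent{i}$, and through the one-step dynamics $\statecomptime{i}{t}$ enters $\auxdyncomp{i}$ in exactly two places, as the identity term of $\auxdyncomptime{i}{t}$ (whose Jacobian is the identity, producing the isolated $\multcomptimeopt{i}{t}$) and inside $f_i$ at the next step, i.e. in $\auxdyncomptime{i}{t+1}$ (producing $\partial_{\statecomptime{i}{t}}\auxdyncomptime{i}{t+1}\transpos\cdot\multcomptimeopt{i}{t+1}$). Isolating the term coming from the identity is precisely what converts the stationarity equation into a \emph{backward} recursion in time.

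The delicate step is term (iv), and I expect it to be the main obstacle. Differentiating the linear coordination term with respect to $\statecomptime{i}{t}$ amounts to applying $(\dyn'(\prevopt,\noiseallcomp)-\auxdyn'(\prevopt,\noiseallcomp))\transpos\cdot\multallcompbar$ and extracting the $\statecomptime{i}{t}$ component. The key cancellation is that, for each block $j$, the partial derivatives of $\dyncomp{j}$ in its own directions $\statecomponent{j}$ and $\ctrlcomp{j}$ coincide with those of $\auxdyncomp{j}$ — since $\auxdyncomp{j}$ is nothing but $\dyncomp{j}$ with the remaining arguments frozen at the bar values — so these self-directions drop out of the difference $\dyn'-\auxdyn'$. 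What survives for the coordinate $\statecomptime{i}{t}$ are only the genuinely coupling derivatives: the dependence of $\dyncomp{j}$ on $\statecomponent{i}$ for the downstream components $j>i$ induced by the triangular ordering \eqref{eq:avail_comp}, and the dependence of the stock dynamics $\dyncomp{\stoalltime}$ on $\statecomponent{i}$. Keeping the one-step time shift as in step (v) — $\statecomptime{i}{t}$ enters $\dyncomptime{j}{t+1}$ and $\dyncomptime{\stoalltime}{t+1}$ through the dynamics — gives the two sums $\sum_{j=i+1}^{n}\partial_{\statecomptime{i}{t}}\dyncomptime{j}{t+1}\transpos\cdot\multcomptimebar{j}{t+1}$ and $\partial_{\statecomptime{i}{t}}\dyncomptime{\stoalltime}{t+1}\transpos\cdot\multcomptimebar{\stoalltime}{t+1}$, evaluated at the bar point because the coordination term is frozen there.

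Finally, collecting contributions (i)--(v) and solving $\partial_{\statecomptime{i}{t}}\lag=0$ for $\multcomptimeopt{i}{t}$ yields the general recursion in~\eqref{eq:back_comp}. The terminal identity then follows by specialization: at $t=T$ the index $t+1$ is out of range, so the three next-step contributions (the two coordination sums and the adjoint term $\partial_{\statecomptime{i}{t}}\auxdyncomptime{i}{t+1}\transpos\cdot\multcomptimeopt{i}{t+1}$) are absent, leaving only $\nabla_{\statecomptime{i}{T}} j_{i,T}$, $\nabla_{\statecomptime{i}{T}}\costfo_T$ and the proximal term, which is exactly the stated expression for $\multcomptimeopt{i}{T}$. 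The only subtle bookkeeping is in step (iv): one must justify the cancellation of the self-directions of $\dyncomp{i}$ and $\auxdyncomp{i}$, and keep the summation range $j>i$ together with the one-step time shift consistent, which is where the block-diagonal choice of $\auxdyn$ and the triangular coupling of the component dynamics are essential.
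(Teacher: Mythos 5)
Your proposal is correct and follows essentially the same route as the paper: both derive the recursion by writing $\nabla_{\statecomptime{i}{t}}\lag=0$ term by term, using that $\partial_{\statecomptime{i}{t}}\auxdyncomptime{i}{t}$ is the identity to isolate $\multcomptimeopt{i}{t}$ and obtain a backward recursion, and that the block-diagonal construction of $\auxdyn$ cancels the self-directions in $\dyn'-\auxdyn'$ so that only the cross-couplings $\partial_{\statecomptime{i}{t}}\dyncomptime{j}{t+1}$ for $j>i$ and $\partial_{\statecomptime{i}{t}}\dyncomptime{\stoalltime}{t+1}$ survive, weighted by the frozen multipliers. Your additional discussion of why those cancellations occur is a more explicit account of what the paper leaves implicit in equation~\textup{(\ref{eq:grad_lag_sta})}, but it is not a different argument.
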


\begin{proof}
    The gradient of the Lagrangian $\lag$ with respect to $\statecomptime{i}{t}, \ t\in \timesteps$ is given by:
    \begin{equation}
        \begin{aligned}
            \nabla_{\statecomptime{i}{T}} \lag(\stateallcompopt, \stoalltimeopt, \ctrlallcompopt, \multallcompopt) &=  
            \begin{multlined}[t]
                \nabla_{\statecomptime{i}{T}} j_{i,T}(\statecomptimeopt{i}{T}, \ctrlcomptimeopt{i}{T})\\
                + \nabla_{\statecomptime{i}{T}}\costfo_{T}(\statecomptimebar{1:i-1}{T}, \statecomptimeopt{i}{T}, \statecomptimebar{i+1:n}{T})\\
                + \gamma_{x} (\statecomptimeopt{i}{T} - \statecompbar{i,T})
                + \partial_{\statecomptime{i}{T}} \auxdyncomptime{i}{T}(\statecompopt{i}, \ctrlcompopt{i}, \noisecomp{i}) \transpos \cdot\multcomptimeopt{i}{T}\eqfinv
            \end{multlined}\\
            \nabla_{\statecomptime{i}{t}} \lag(\stateallcompopt, \stoalltimeopt, \ctrlallcompopt, \multallcompopt) &=
            \begin{multlined}[t]
                \nabla_{\statecomptime{i}{t}} j_{i,t}(\statecomptimeopt{i}{t}, \ctrlcomptimeopt{i}{t})
                + \nabla_{\statecomptime{i}{t}}\costfo_{t}(\statecomptimebar{1:i-1}{t}, \statecomptimeopt{i}{t}, \statecomptimebar{i+1:n}{t})\\
                + \gamma_{x} (\statecomptimeopt{i}{t} - \statecompbar{i,t})
                + \partial_{\statecomptime{i}{t}} \dyncomptime{\stoalltime}{t+1}(\statecompbar{1:n}, \stoalltimebar)\transpos \cdot \multcomptimebar{\stoalltime}{t+1}\\
                + \sum_{j=i+1}^{n} \partial_{\statecomptime{i}{t}} \dyncomptime{j}{t+1}(\statecompbar{1:j}, \stoalltimebar, \ctrlcompbar{j}, \noisecomp{j})\transpos\cdot\multcomptimebar{j}{t+1}\\ 
                + \partial_{\statecomptime{i}{t}} \auxdyncomptime{i}{t}(\statecompopt{i}, \ctrlcompopt{i}, \noisecomp{i})\transpos\cdot\multcomptimeopt{i}{t}\\
                + \partial_{\statecomptime{i}{t}} \auxdyncomptime{i}{t+1}(\statecompopt{i}, \ctrlcompopt{i}, \noisecomp{i})\transpos\cdot\multcomptimeopt{i}{t+1}\eqfinp
            \end{multlined}
        \end{aligned}
    \label{eq:grad_lag_sta}
    \end{equation}
    Using
    \begin{align}
        \nabla_{\statecomptime{i}{t}} \lag(\stateallcompopt, \stoalltimeopt, \ctrlallcompopt, \multallcompopt) &= 0, \quad t \in \timesteps\eqfinv\\
        \partial_{\statecomptime{i}{t}} \auxdyncomptime{i}{t}(\statecompopt{i}, \ctrlcompopt{i}, \noisecomp{i}) &= \mathrm{I}, \quad t \in \timesteps\eqfinv
    \end{align}
    where $\mathrm{I}$ is the identity matrix of appropriate size, we get the formula~\textup {(\ref{eq:back_comp})}.
    \skipfinpreuve
\end{proof}

\begin{proposition}
    The optimal multiplier $\multcompopt{\stoalltime} = (\multcomptimeopt{\stoalltime}{0}, \ldots, \multcomptimeopt{\stoalltime}{T})$ associated to the dynamics of the stock in the auxiliary problem~\textup {(\ref{eq:aux_pb_indus})} can be computed with the following backward recursion for $t\in \timesteps$:
    \begin{equation}
        \begin{aligned}
            \multcomptimeopt{\stoalltime}{T} &= - \gamma_{s} (\stoopt{T} - \stobar{T})\eqfinv\\
            \multcomptimeopt{\stoalltime}{t} &=
            \begin{multlined}[t]
                - \gamma_{s} (\stoopt{t} - \stobar{t}) - \sum_{i=1}^{n} \partial_{\sto{t}} \dyncomptime{i}{t+1}(\statecompbar{1:i}, \stoalltimebar, \ctrlcompbar{i}, \noisecomp{i})\transpos \cdot \multcomptimebar{i}{t+1}\\ 
                - \partial_{\sto{t}} \auxdyncomptime{\stoalltime}{t+1}(\stoalltimeopt)\transpos \cdot \multcomptimeopt{\stoalltime}{t+1}\eqfinp
            \end{multlined}
        \end{aligned}
        \label{eq:back_stock}
    \end{equation}
    \label{prop:back_stock}
\end{proposition}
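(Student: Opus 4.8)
The plan is to reproduce, for the stock variables, the adjoint-state argument already used for Proposition~\ref{prop:back_comp}, now differentiating the Lagrangian $\lag$ with respect to $\sto{t}$ instead of the component states. First I would isolate, for each $t \in \timesteps$, the three groups of terms in $\lag$ in which $\sto{t}$ occurs: the strongly convex penalty $\frac{\gamma_{s}}{2}\sqnorm{\stoalltime - \stoalltimebar}$, the coordination term $\proscal{\multallcompbar}{(\dyn'(\prevopt, \noiseallcomp) - \auxdyn'(\prevopt, \noiseallcomp))\cdot(\stateallcomp, \stoalltime, \ctrlallcomp)}$, and the adjoint pairing $\proscal{\multallcomp}{\auxdyn(\stateallcomp, \stoalltime, \ctrlallcomp, \noiseallcomp)}$. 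The penalty immediately contributes $\gamma_{s}(\stoopt{t} - \stobar{t})$.

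Next I would argue that the coordination term feeds the recursion only through the component dynamics. Indeed, by the definition~\eqref{eq:aux_dyn} the auxiliary component dynamics $\auxdyncomp{i}$ is frozen at $\stoalltimebar$, hence does not depend on $\stoalltime$ at all, whereas the auxiliary stock dynamics $\auxdyncomp{\stoalltime}(\stoalltime) = \dyncomp{\stoalltime}(\statecompbar{1:n}, \stoalltime)$ has, on the stock variables, the very same partial derivatives as $\dyncomp{\stoalltime}$ evaluated at $\prevopt$; the stock block of $\dyn' - \auxdyn'$ therefore vanishes. Since $\dyncomp{i}$ is coupled to the stock (the coupling noted after~\eqref{eq:dyn_comp_i}) and $\sto{t}$ enters $\dyncomptime{i}{t+1}$ through $f_{i}$, this group leaves exactly $\sum_{i=1}^{n}\partial_{\sto{t}}\dyncomptime{i}{t+1}(\statecompbar{1:i}, \stoalltimebar, \ctrlcompbar{i}, \noisecomp{i})\transpos \cdot \multcomptimebar{i}{t+1}$, mirroring how the component recursion~\eqref{eq:back_comp} was fed by $\multcomptimebar{\stoalltime}{t+1}$.

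For the adjoint pairing I would observe that $\sto{t}$ appears in exactly two blocks of $\auxdyncomp{\stoalltime}$: in $\auxdyncomptime{\stoalltime}{t}$, whose $t$-th block reads $\sto{t} - f_{\stoalltime}(\statecompbar{1:n,t-1}, \sto{t-1})$ so that $\sto{t}$ enters linearly with unit coefficient, giving $\partial_{\sto{t}}\auxdyncomptime{\stoalltime}{t} = \mathrm{I}$, and in $\auxdyncomptime{\stoalltime}{t+1} = \sto{t+1} - f_{\stoalltime}(\statecompbar{1:n,t}, \sto{t})$ through the map $f_{\stoalltime}$. Collecting the three groups, imposing the saddle-point stationarity $\nabla_{\sto{t}}\lag(\stateallcompopt, \stoalltimeopt, \ctrlallcompopt, \multallcompopt) = 0$ from~\eqref{eq:lag_opt}, and using $\partial_{\sto{t}}\auxdyncomptime{\stoalltime}{t} = \mathrm{I}$ to solve for $\multcomptimeopt{\stoalltime}{t}$, yields the backward recursion~\eqref{eq:back_stock}. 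The terminal case $t = T$ has neither a successor block $\auxdyncomptime{\stoalltime}{t+1}$ nor a coordination contribution through $\partial_{\sto{T}}\dyncomptime{i}{T+1}$, leaving $\multcomptimeopt{\stoalltime}{T} = -\gamma_{s}(\stoopt{T} - \stobar{T})$.

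The genuinely delicate point, as opposed to the routine differentiation, is the cancellation in the second step: one must verify carefully that the stock block of $\dyn' - \auxdyn'$ is identically zero, that is, that $\auxdyncomp{\stoalltime}$ reproduces the true stock dynamics on the stock coordinates while the $\auxdyncomp{i}$ lose their $\stoalltime$-dependence by construction. This is precisely what ensures that the coordination term enters the stock recursion only via the previous-iterate component multipliers $\multcomptimebar{i}{t+1}$, and it is the structural fact that makes the mixed parallel/sequential strategy of~\S\ref{subsec:fix_pt_alg} consistent.
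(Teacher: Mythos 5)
Your proposal is correct and follows essentially the same route as the paper: differentiate the Lagrangian with respect to $\sto{t}$, impose the saddle-point stationarity~\eqref{eq:lag_opt}, and use $\partial_{\sto{t}} \auxdyncomptime{\stoalltime}{t}(\stoalltimeopt) = 1$ to solve for $\multcomptimeopt{\stoalltime}{t}$. Your explicit verification that the stock block of $\dyn' - \auxdyn'$ vanishes (so the coordination term contributes only the component-dynamics derivatives weighted by $\multcomptimebar{i}{t+1}$) is a useful clarification of a step the paper leaves implicit in its expression~\eqref{eq:grad_lag_sto}, but it is the same argument.
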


\begin{proof}
    The gradient of the Lagrangian $\lag$ with respect to $\sto{t}, \ t\in \timesteps$ is given by:
    \begin{equation}
        \begin{aligned}
            \nabla_{\sto{T}} \lag(\stateallcompopt, \stoalltimeopt, \ctrlallcompopt, \multallcompopt) &= \gamma (\stoopt{T} - \stobar{T}) + \partial_{\sto{T}} \auxdyncomptime{\stoalltime}{T}(\stoalltimeopt)\transpos \cdot \multcomptimeopt{\stoalltime}{T}\eqfinv\\
            \dersto{t} \lag(\stateallcompopt, \stoalltimeopt, \ctrlallcompopt, \multallcompopt) &=
            \begin{multlined}[t]
                \gamma (\stoopt{t} - \stobar{t})\\ 
                + \sum_{i=1}^{n} \partial_{\sto{t}} \dyncomptime{i}{t+1}(\statecompbar{1:i}, \stoalltimebar, \ctrlcompbar{i}, \noisecomp{i})\transpos \cdot \multcomptimebar{i}{t+1}\\
                + \partial_{\sto{t}} \auxdyncomptime{\stoalltime}{t+1}(\stoalltimeopt)\transpos\cdot \multcomptimeopt{\stoalltime}{t+1} + \partial_{\sto{t}} \auxdyncomptime{\stoalltime}{t}(\stoalltimeopt)\transpos\cdot \multcomptimeopt{\stoalltime}{t}\eqfinp
            \end{multlined}
        \end{aligned}
        \label{eq:grad_lag_sto}
    \end{equation}
    Using
    \begin{align}
        \nabla_{\sto{t}} \lag(\stateallcompopt, \stoalltimeopt, \ctrlallcompopt, \multallcompopt) &= 0, \quad t \in \timesteps\eqfinv\\
        \partial_{\sto{t}} \auxdyncomptime{\stoalltime}{t}(\stoalltimeopt) &= 1, \quad t \in \timesteps\eqfinv
    \end{align}
    we get the backward recursion~\textup {(\ref{eq:back_stock})}.
    \skipfinpreuve
\end{proof}




\section{Derivative of the relaxed indicator function}
\label{sec:indic_rel}

We give some details about the derivative of the relaxed indicator function. 
The relaxed indicator function $\findi{\mathcal{A}}\rel$, where $\alpha > 0$, appears in the dynamics and cost with three main cases for the set $\mathcal{A} \subset \bbR$. Note that $\findi{\mathcal{A}}\rel$ is not differentiable at $x \in \bbR$ if $d(\mathcal{A}, x) = \frac{1}{2\alpha}$. At such point, the derivative is taken to be $0$. The following situations occur:
\begin{enumerate}
    \item $\mathcal{A}$ is a singleton $\na{a}$, then for $x \in \bbR$:
    \begin{equation}
        \indicrel{\na{a}}{x} =
        \left\{
        \begin{aligned}
            1 - 2\alpha \nabs{x - a} & \quad \text{if } \nabs{x - a} \leq \frac{1}{2\alpha}\eqfinv\\
            0 & \quad \text{if } \nabs{x - a} > \frac{1}{2\alpha}\eqfinp\\
        \end{aligned}
        \right.
    \end{equation}
    Hence the derivative $\findi{\na{a}}^{'\alpha}$ is given by:
    \begin{equation}
        \indicrelder{\na{a}}{x} =
        \left\{
        \begin{aligned}
            2\alpha & \quad \text{if } a - \frac{1}{2\alpha} < x < a\eqfinv\\
            - 2\alpha & \quad \text{if } a < x < a + \frac{1}{2\alpha}\eqfinv\\
            0 & \quad \text{otherwise }\eqfinp\\
        \end{aligned}
        \right.
    \end{equation}
    \item $\mathcal{A} = \bbRp$ then for $x \in \bbR$ we have:
    \begin{equation}
        \indicrel{\bbRp}{x} =
        \left\{
        \begin{aligned}
            2\alpha x + 1 & \quad \text{if } -\frac{1}{2\alpha} < x < 0\eqfinv\\
            1 & \quad \text{if } x \geq 0\eqfinv\\
            0 & \quad \text{if } x \leq -\frac{1}{2\alpha}\eqfinp\\
        \end{aligned}
        \right.
        \quad
        \indicrelder{\bbRp}{x} =
        \left\{
        \begin{aligned}
            2\alpha & \quad \text{if } -\frac{1}{2\alpha} < x < 0\eqfinv\\
            0 & \quad \text{otherwise}\eqfinp\\
        \end{aligned}
        \right.
    \end{equation}
    \item $\mathcal{A} = \bbRpe$: if we strictly apply~Definition~\ref{def:relax_indic}, we would have $\findi{\bbRpe}\rel = \findi{\bbRp}\rel$. However with this definition we would not have pointwise convergence of $\findi{\bbRpe}\rel$ towards $\findi{\bbRpe}$ as $\alpha$ goes to $0$. Indeed, for all $\alpha > 0$ we would have $\indicrel{\bbRpe}{0} = 1$ but $\indic{\bbRpe}{0} = 0$. To overcome this issue we define $\findi{\bbRpe}\rel$ as follows:
    \begin{equation}
        \indicrel{\bbRpe}{x} =
        \left\{
        \begin{aligned}
            2\alpha x & \quad \text{if } 0 < x < \frac{1}{2\alpha}\eqfinv\\
            1 & \quad \text{if } x \geq \frac{1}{2\alpha}\eqfinv\\
            0 & \quad \text{if } x \leq 0\eqfinp\\
        \end{aligned}
        \right.
        \quad 
        \indicrelder{\bbRpe}{x} =
        \left\{
        \begin{aligned}
            2\alpha & \quad \text{if } 0 < x < \frac{1}{2\alpha} \eqfinv\\
            0 & \quad \text{otherwise}\eqfinp\\
        \end{aligned}
        \right.
    \end{equation}
\end{enumerate}
Using these formulas for the derivative of the relaxed indicator function and the explicit expressions of the relaxed cost function and relaxed dynamics given in~\S\ref{subsubsec:relax_cost} and~Appendix~\ref{sec:rel_dyn} respectively, all the gradients that appear either in the objective function of the subproblems or in the backward recursion for the multiplier update can be computed.



\bibliographystyle{siamplain}      
\bibliography{these}   

\end{document}